\newtheorem{thm}{Theorem}[section]
\newtheorem{lem}[thm]{Lemma}
\newtheorem{cor}[thm]{Corollary}
\newtheorem{prop}[thm]{Proposition}
\theoremstyle{definition}
\theoremstyle{definition}
\theoremstyle{definition}
\newtheorem{defn}[thm]{Definition}
\theoremstyle{definition}
\newtheorem{remark}[thm]{Remark}
\newcommand{\lo}{\ell^1(G)}
\newcommand{\LO}{L^1(G)}
\newcommand{\LOQ}{L^1(\mathbb{G})}
\newcommand{\LOQH}{L^1(\widehat{\mathbb{G}})}
\newcommand{\lt}{\ell^2(G)}
\newcommand{\LTQ}{L^2(\mathbb{G})}
\newcommand{\LTQH}{L^2(\widehat{\mathbb{G}})}
\newcommand{\LI}{L^{\infty}(G)}
\newcommand{\LIQ}{L^{\infty}(\mathbb{G})}
\newcommand{\liq}{\ell^{\infty}(\mathbb{G})}
\newcommand{\loq}{\ell^{1}(\mathbb{G})}
\newcommand{\ltq}{\ell^{2}(\mathbb{G})}
\newcommand{\bim}[1]{\mathrm{Bim}(#1^{\perp})}
\newcommand{\msig}[1]{ m_{\sigma}^r(#1)}
\newcommand{\nul}{\mathfrak{N}}
\newcommand{\LIQH}{L^{\infty}(\widehat{\mathbb{G}})}
\newcommand{\LIQHH}{L^{\infty}(\widehat{\widehat{\mathbb{G}}})}
\newcommand{\LIQHP}{L^{\infty}(\widehat{\mathbb{G}}')}
\newcommand{\LIQHOP}{L^{\infty}(\widehat{\mathbb{G}}\op)}
\newcommand{\LOQHOP}{L^1(\widehat{\mathbb{G}}\op)}
\newcommand{\BH}{\mc{B}(H)}
\newcommand{\Th}{\mc{T}(H)}
\newcommand{\BLT}{\mc{B}(L^2(G))}
\newcommand{\KLT}{\mc{K}(L^2(G))}
\newcommand{\BLTQ}{\mc{B}(L^2(\mathbb{G}))}
\newcommand{\bltq}{\mc{B}(\ell^2(\mathbb{G}))}
\newcommand{\kltq}{\mc{K}(\ell^2(\mathbb{G}))}
\newcommand{\TCQ}{\mc{T}(L^2(\mathbb{G}))}
\newcommand{\McbQl}{M_{cb}^l(L^1(\mathbb{G}))}
\newcommand{\QcbQl}{Q_{cb}^l(L^1(\mathbb{G}))}
\newcommand{\QcbQHl}{Q_{cb}^l(L^1(\wh{\mathbb{G}}))}
\newcommand{\McbQHl}{M_{cb}^l(L^1(\widehat{\mathbb{G}}))}
\newcommand{\McbQr}{M_{cb}^r(L^1(\mathbb{G}))}
\newcommand{\QcbQr}{Q_{cb}^r(L^1(\mathbb{G}))}
\newcommand{\tl}{\Theta^\ell}
\newcommand{\tlf}{\wh{\tl}\op(\wh{f_{\iota}})}
\newcommand{\tll}[1]{\wh{\tl}\op(\wh{#1})}
\newcommand{\tf}{\wh{\tl}\op(\wh{f}\,)}
\newcommand{\trf}{\Theta^r(f)}
\newcommand{\m}{\mathsf{M}}
\newcommand{\n}{\mathsf{N}}
\newcommand{\cb}[3]{\mathcal{CB}_{#1}^{\sigma, #2}(#3)}
\newcommand{\ext}{\textnormal{ext}}
\newcommand{\crs}[2]{{#1} \leftindex_\al {\ltimes}  {#2}}
\newcommand{\crsred}[2]{{#1} \leftindex_\al {\ltimes}_r {#2}}
\newcommand{\phih}{\wh{\vphi}}
\newcommand{\uih}[2]{\wh{u}_{#1}^{#2}}
\newcommand{\uis}[2]{(\wh{u}_{#1}^{#2})^*}
\newcommand{\fcrs}[2]{{#1}  \ltimes_{\mathcal{F}} {#2}}
\newcommand{\ten}{\otimes}
\newcommand{\oten}{\overline{\otimes}}
\newcommand{\pten}{\widehat{\otimes}}
\newcommand{\mten}{\otimes_{\textnormal{min}}}
\newcommand{\ften}{\oten_{\mc{F}}}
\newcommand{\vphi}{\varphi}
\newcommand{\G}{\mathbb{G}}
\newcommand{\C}{\mathbb{C}}
\newcommand{\N}{\mathbb{N}}
\DeclareSymbolFont{lettersA}{U}{txmia}{m}{it}
\DeclareMathSymbol{\W}{\mathord}{lettersA}{151}
\newcommand{\al}{\alpha}
\newcommand{\be}{\beta}
\newcommand{\lm}{\lambda}
\newcommand{\Gam}{\Gamma}
\newcommand{\om}{\omega}
\newcommand{\Om}{\Omega}
\newcommand{\wtal}{\widetilde{\alpha}}
\newcommand{\Irr}{\mathrm{Irr}(\mathbb{G})}
\newcommand{\Irrd}{\mathrm{Irr}(\mathbb{\widehat{G}})}
\newcommand{\id}{\textnormal{id}}
\newcommand{\op}{^{\textnormal{op}}}
\newcommand{\pol}{\textnormal{Pol}(\G)}
\newcommand{\polh}{\textnormal{Pol}(\wh{\G})}
\newcommand{\uij}{u_{ij}^{\al}}
\providecommand{\norm}[1]{\lVert#1\rVert}
\newcommand{\h}[1]{\widehat{#1}}
\newcommand{\wh}[1]{\widehat{#1}}
\newcommand{\mc}[1]{\mathcal{#1}}
\newcommand{\la}{\langle}
\newcommand{\ra}{\rangle}
\newcommand{\lBr}{\biggl(}
\newcommand{\rBr}{\biggr)}
\newcommand{\tr}{\mathrm{tr}}
\newcommand{\rmv}[1]{}
\newcommand{\ww}{\widetilde{W}}
\begin{document}

\title[]{Fej\'{e}r representations for discrete quantum groups and applications}

\author{Jason Crann}
\email{jasoncrann@cunet.carleton.ca}
\address{School of Mathematics and Statistics, Carleton University, Ottawa, ON, Canada K1S 5B6}

\author{Soroush Kazemi}
\email{soroushkazemi@cmail.carleton.ca}
\address{School of Mathematics and Statistics, Carleton University, Ottawa, ON, Canada K1S 5B6}

\author{Matthias Neufang}
\email{mneufang@math.carleton.ca}
\address{School of Mathematics and Statistics, Carleton University, Ottawa, ON, Canada K1S 5B6}

\keywords{Discrete quantum groups; crossed products; approximation properties; completely bounded multipliers}
\subjclass[2020]{Primary 46L67, 	46L55, 46L07; Secondary 46L89, 43A55.}

\begin{abstract}
We prove that a discrete quantum group $\G$ has the approximation property if and only if a Fej\'{e}r-type representation holds for its $C^*$--algebraic or von Neumann algebraic crossed products. As applications, we extend several results from \cite{crannNeufang2022Fejer,anoussis2014ideals-A(G),anoussis2016idealsFourier} to the context of discrete quantum groups with the approximation property. Additionally, we provide new characterizations of invariant $\LIQH$--bimodules of $\mc{B}(\ltq)$ and invariant $C(\wh{\G})$--bimodules of $\mc{K}(\ltq)$, some of which are new in the group setting. Finally, we study Fubini crossed products of discrete quantum group actions.
\end{abstract}

\begin{spacing}{1.0}

\maketitle
\section{Introduction}
In the early 20th century, Fej\'{e}r established, under appropriate conditions, the approximation of a function by the Ces\`{a}ro sum of its Fourier series \cite{fejer1903untersuchungen}. More specifically, if $f\in L^{\infty}(\mathbb{T})$ with Fourier series $S_N(f)(t)=\sum_{n=-N}^{N}\wh{f}(n)e^{int}$, then
\begin{equation}\label{intro}
    \frac{1}{N}\sum_{n=0}^{N-1}S_n(f)=F_N*f\rightarrow f
\end{equation}
weak* (uniformly if $f \in C(\mathbb{T})$), where $F_N(t) = \frac{1}{N} \sum_{n=0}^{N-1} \sum_{k=-n}^{n} e^{ikt}$ is Fej\'{e}r's kernel. Shortly after he gave explicit examples of continuous periodic functions whose Fourier series do not converge pointwise \cite{fejer1911singularites}.

We may interpret the Ces\`{a}ro convergence \eqref{intro} through pointwise multiplication under the Fourier transform: the sequence $(\widehat{F}_N)$ forms a bounded approximate identity for the Fourier algebra $A(\mathbb{Z})$, and we have $\widehat{F}_N \cdot x \to x$ weak* for any $x \in VN(\mathbb{Z}) \cong L^{\infty}(\mathbb{T})$ (uniformly if $x \in C^*_{\lambda}(\mathbb{Z}) \cong C(\mathbb{T})$), where $\cdot$ is the canonical pointwise action of $A(\mathbb{Z})$ on $VN(\mathbb{Z})$. It follows that
$$ x = w^* - \lim_{N} \frac{1}{N} \sum_{n=0}^{N-1} \chi_{[-n,n]} \cdot x = w^* - \lim_{N} \frac{1}{N} \sum_{n=0}^{N-1} \sum_{k=-n}^{n} \tau(x \lambda(k)^*) \lambda(k) $$
provides a Fej\'{e}r representation for any $x$ in the von Neumann crossed product $VN(\mathbb{Z}) = \mathbb{Z} \ltimes \mathbb{C}$ through an explicit linear combination of translation operators, where $\tau$ is the canonical tracial state on $VN(\mathbb{Z})$. Note the appearance of a Følner sequence for $\mathbb{Z}$, linking the Ces\`{a}ro summability to amenability of $\mathbb{Z}$.

Similar Fej\'{e}r-type representations exist for non-trivial crossed products $G \ltimes \n$ where $G$ is a locally compact abelian group acting on a von Neumann algebra $\n$ (see, for instance, \cite{zeller1968produit} or \cite[§7.10]{Pedersen2018automorphism}), where the coefficients are now $\n$-valued. In that setting, the Ces\`{a}ro sum is replaced by a suitable average over a bounded approximate identity in $L^1(\widehat{G}) \cong A(G)$, again linking Fej\'{e}r representations to amenability of $G$.

If $G$ is a discrete group acting on a von Neumann algebra $\n$, then the "Fourier series"
\begin{equation}\label{intro2}
    \sum_{s \in G} E(x u(s)^*) u(s)
\end{equation}
is in general not strongly or weak* convergent to $x$ for every $x \in G \ltimes \n$, even in the case of $\mathbb{Z}$ acting trivially on $\mathbb{C}$, as mentioned above. Here, $E : G \ltimes \n \rightarrow \n$ is the canonical conditional expectation and $u(s)$ is the image of the regular representation in the crossed product. Summability properties of \eqref{intro2} and related questions concerning the Fourier analysis of $C^*$-- and von Neumann crossed products have been extensively studied. In particular, Fej\'{e}r-type representations for elements of crossed products have been considered over (weakly) amenable discrete groups \cite{bedos2015fourier, bedos2016fourier, exel1997amenability, zeller1968produit}. 

A complete solution was established in \cite{crannNeufang2022Fejer} for locally compact groups, where it was shown that Fej\'{e}r representability in crossed products is equivalent to the approximation property, where a locally compact group $G$ has the approximation property (AP) if there exists a net $(v_i)$ in $A(G)$ such that $v_i \rightarrow 1$ in $\sigma(M_{cb}A(G), Q_{cb}(G))$ \cite{haagerup-Kraus1994approximation}. More specifically, it was shown in \cite[Theorem 4.1 and 4.10]{crannNeufang2022Fejer} that a locally compact group $G$ has the AP if and only if for any $W^*$--dynamical system $(M, G,\alpha)$, there exists a net $(h_j)$ in $A(G) \cap C_c(G)$ such that for all $T\in G\bar{\ltimes} M$,
$$ T=w^*-\lim_j\lim_i\int_G \frac{h_j(s)}{\Delta(s)}E(\lm(f_i)T\lm(f_i)\lm(s^{-1}))\lm(s) \ ds,$$
where $f_i\subseteq C_c(G)^+_{\|\cdot\|_1=1}$ is a symmetric bounded approximate identity for $\LO$, and $E$ is the canonical operator-valued weight. Similarly, a locally compact group $G$ has the AP if and only if for any $C^*$--dynamical system $(A, G,\alpha)$, there exists a net  $(h_j)$ in $A(G) \cap C_c(G)$ such that for all $T\in G\ltimes A$,
$$ T=\lim_j\lim_i\int_G \frac{h_j(s)}{\Delta(s)}E(\lm(f_i)T\lm(f_i)\lm(s^{-1}))\lm(s) \ ds.$$
In this paper, we will establish analogous results for $C^*$--algebraic and von Neumann algebraic crossed products over a discrete quantum group with the AP.

Anoussis, Katavolos, and Todorov studied the structure certain $VN(G)$--bimodules in $\BLT$ arising from closed left ideals in $L^1(G)$ \cite{Todorovanoussis2018bimodules}. Some of their results were extended from discrete groups to locally compact groups with the AP in \cite[Theorem 5.5]{crannNeufang2022Fejer}. As applications of our Fej\'{e}r representation, we generalize these results for $\LIQH$--bimodules in $\mc{B}(\ltq)$ of a discrete quantum group with the AP. Moreover, we characterize  $\LIQH$--bimodules in $\mc{B}(\ltq)$ which are invariant under a natural action of $M_{cb}(L^1(\widehat{\mathbb{G}}^{\rm op}))$.

As is well known, Fej\'{e}r representations in discrete crossed products are used in an important step towards Galois correspondences between bimodules in crossed products and subsets of groups (see, e.g., \cite{cs1,cs2,cs3,ku}). We postpone the application of our quantum group Fej\'{e}r representation to analogous Galois correspondences of discrete quantum group actions to subsequent work.

The structure of the paper is as follows. In Section 2, we recall some definitions and results needed in the theory, particularly those related to locally compact, discrete, and compact quantum groups, their actions on $C^*$-- or von Neumann algebras, and their crossed products.

Section 3 contains the main result of this paper, the Fej\'{e}r representation for elements of a crossed product.

In section 4, we define the $\LIQH$--bimodules $\mathrm{Bim}(J^{\perp})$ and $\mathrm{Ran}(J)$ for a closed left ideal $J\unlhd\LOQ$. We establish the equality $\mathrm{Bim}(J^{\perp})=\mathrm{Ran}(J)^{\perp}$ under the AP. Additionally, we show that under the AP, all weak*-closed invariant $\LIQH$--bimodules of $\mc{B}(\ltq)$ are in the form of  $\mathrm{Bim}(J^{\perp})$.  We show that jointly invariant weak*-closed subspaces of $\bltq$ are in the form of $\Sigma$--harmonic operators. Similarly, we establish a similar result for closed invariant $C(\wh{\G})$--bimodules of $\mc{K}(\ltq)$. We conclude this section with the notions of Fubini crossed product. And, as a final application of the Fej\'{e}r theorem, we show that any action of a discrete quantum group with the AP possesses a slice mapping property.

\section{Preliminaries}

In this section, we recall some basic definitions and results in the theory of locally compact quantum groups and their actions on $C^*$- and von Neumann algebras. The symbols $\mten$ and $\oten$ will denote the minimal and spatial tensor products of $C^*$-- and von Neumann algebras, respectively, while $\pten$ will denote the operator space projective tensor product. For any subset $X$ of a vector space $Y$, we denote the linear span of $X$ by $\la X\ra$. 
\subsection{Locally compact quantum groups}
For more details, we refer the readers to \cites{vaes2001thesis,kustermans2003locallyVN,woronowicz1998compact, van1996discrete}.
A locally compact quantum group $\G$ in the sense of Kustermans–Vaes is a
quadruple $\G=(\LIQ,\Gam, \vphi, \psi)$, where $\LIQ$ is a von Neumann algebra, $\Gam:\LIQ \rightarrow \LIQ \oten \LIQ $ is an injective normal unital $*$--homomorphism satisfying $(\Gam\ten\id)\Gam=(\id \ten \Gam)\Gam$ (co-associativity); and $\vphi$ and $\psi$ are respectively left and right Haar weights, meaning normal semifinite faithful weights on $\LIQ$ satisfying
$$\vphi((f\ten\id)\Gam(x))=f(1)\vphi(x),\, f \in \LOQ,\, x \in \LIQ_{\vphi}\:\: \text{(left\, invariance)},$$
and $$\psi((\id \ten f)\Gam(x))=f(1)\psi(x),\,  f \in \LOQ,\, x \in \LIQ_{\psi}\:\: \text{(right\, invariance)},$$
where above, and throughout the paper we denote the predual of $\LIQ$ by $\LOQ$. The co-multiplication $\Gam$ induces an associative completely contractive multiplication
\begin{equation}\label{eq1}
	\star:=\Gam_*: \LOQ \pten \LOQ\ni f_1\ten f_2 \mapsto f_1\star f_2:=\Gam_*(f_1\ten f_2)\in \LOQ.
\end{equation}
We denote the GNS Hilbert space of the left Haar weight $\vphi$ by $\LTQ$. There exists a left fundamental unitary $W\in \LIQ \oten \BLTQ $ satisfying the pentagonal relation $W_{12}W_{13}W_{23}=W_{23}W_{12}$. We have $\LIQ=\{(\id \ten \omega)(W) : \omega \in \TCQ\}''$, and for $x \in \LIQ$, the co-multiplication can be written as
 \begin{equation}\label{eq2}
\Gam(x)=W^*(1\ten x)W.
 \end{equation}
Similarly, there exists a right fundamental unitary operator V associated with the right Haar weight $\psi$ that satisfies the pentagonal relation and $\Gam(x)=V(x \ten1)V^*$.
The left regular representation is an injective completely contractive homomorphism $\lambda: \LOQ \rightarrow \BLTQ$ defined by $\lambda(f)=(f \ten \id)(W)$. We also obtain the right regular representation $\rho:\LOQ\rightarrow \BLTQ$ by $\rho(f)=(\id \otimes f)(V)$.

Defining $\LIQH:=\{\lambda(f) : f \in \LOQ\}''\subset \BLTQ$, we obtain the dual quantum group $\wh{\G}=(\LIQH, \wh{\Gam}, \wh{\vphi}, \wh{\psi})$ with co-multiplication $\wh{\Gam}$ given by $$\wh{\Gam}=\wh{W}^*(1\ten \wh{x})\wh{W},$$
where the dual fundamental unitary $\wh{W}=\sigma W^* \sigma$, and $\sigma$ is the flip map on $\LTQ\ten\LTQ$. In this case, we have $\LTQ\cong\LTQH$ and 
\begin{equation}\label{eq2.2.2}
    \overline{\la x\hat{x}\,:\,\, x \in \LIQ,\, \h{x}\in \LIQH \ra}^{w^*}=\BLTQ.
\end{equation}
Pontryagin duality ($\wh{\wh{\G}}=\G$) holds and we have $\LIQHH=\{(\wh{\lambda}(\hat{f}):=\hat{f}\ten \id)(\wh{W})\, :\, \hat{f} \in \LOQH\}=\LIQ$. Using the right regular representation $\rho$ we obtain a duality between $\LIQ$ and $\LIQHP$.
We will also denote the objects related to $\wh{\G}$ simply by using hats and objects related to $\wh{\G}'$ by using hats and primes.\\

An element $\h{b}$ in $\LIQH$ is called a completely bounded left multiplier of $\LOQ$ if $\h{b}\lambda(f)\in \lambda(\LOQ)$ for all $f \in \LOQ$ and the induced map
$$m_{\h{b}}^l: f \in \LOQ \rightarrow \lambda^{-1}(\h{b}\lambda(f)) \in \LOQ$$
is completely bounded on $\LOQ$ \cite{JNR-junge2009representation}. We let $M_{cb}^l(\LOQ)$  be the space of all completely bounded left multipliers of $\LOQ$. We can identify $\LOQ$ with the subalgebra $\lambda(\LOQ)$ in $M_{cb}^l(\LOQ)$.  Completely bounded right multipliers are defined analogously, where $M_{cb}^r(\LOQ)$ is the space of $\h{b}' \in \LIQH'$ such that $\rho(\LOQ)\h{b}' \subseteq\rho(\LOQ)$ and the induced map $m_{\h{b}'}^r$ is completely bounded. It turns out that $\McbQl$ and $\McbQr$ are dual Banach algebras and we denote the preduals by $\QcbQl$ and $\QcbQr$ \cite[\S 3]{huNeuRua2011cbmultiplierQ}. Moreover, we have the following representations for elements of $\QcbQl$ and $\QcbQr$ \cite[Prop. 3.2]{crann2017amenabilitycov2} and \cite[Prop.3.9]{daws2024approximation}.
\begin{gather}
\begin{aligned}\label{eq3}
    &\QcbQl=\{ \Omega_{x,\rho}\,|\; x\in C_0(\G)\mten\mc{K}(H),\, \rho \in \LOQ\pten\BH_* \};\\
    &\QcbQr=\{ \Omega_{x,\rho}\,|\;x\in C_0(\G)\mten\mc{K}(H),\, \rho \in \LOQ\pten\BH_* \};
\end{aligned} 
\end{gather}
where $\la  \wh{b} ,\Omega_{x,\rho}\ra=\la(\tl(\wh{b})\ten\id)x,\rho\ra$, for any $\wh{b}\in \McbQl$, and $\la \wh{b}', \Omega_{x,\rho}\ra=\la(\Theta^r(\wh{b}')\ten\id)x,\rho\ra$, for any $\wh{b}'\in \McbQr$.

The opposite and commutant von Neumann algebraic quantum groups $(\LIQ, \Gam)\op$  and $(\LIQ, \Gam)'$ are defined as follows \cite[\S 4]{kustermans2003locallyVN}. The underlying von Neumann algebra of the opposite quantum group is $\LIQ$ and the comultiplication is given by $\Gam\op(x)=\Sigma\Gam(x)$, for $x \in \LIQ$, where $\Sigma$ is the flip map on the tensor product. The underlying von Neumann algebra of the commutant quantum group is $\LIQ'$ and the comultiplication is $\Gam'(x)=(J\ten J)\Gam(JxJ)(J\ten J)$ for all $x' \in \LIQ'$, where $J:\LTQ\rightarrow \LTQ$ is the anti-unitary conjugation associated with the left Haar weight $\vphi$. For the opposite and commutant quantum groups $\G\op$ and $\G'$, we have the following relations;
$$\vphi\op =\psi \; \;\;\; \;\text{and} \;\;\;\; \; \vphi'(x')=\vphi(Jx'J), \;\; \, x'\in \LIQ'^+,$$
$$W\op=\Sigma V^* \Sigma  \; \;\;\; \;\text{and} \;\;\;\; \; W'=(J\ten J)W(J\ten J)=\wh{V},$$
With this notation we have the equality $(\wh{\G})\op=\wh{(\G')}$ \cite[Proposition 4.2]{kustermans2003locallyVN}. 

A reduced $C^*$--algebraic quantum group is a $C^*$--algebra $A$ with a non-degenerate $*$--homomorphism $\Gam: A \rightarrow M(A\mten A)$ such that 
\begin{itemize}
    \item $(\Gam\ten\id)\Gam=(\id \ten \Gam)\Gam$;
    \item $\la (\om \otimes \id)\Gam(a)\,|\,\om \in A^*,\, a\in A \ra$ and $\la (\id \otimes \om)\Gam(a)\,|\,\om \in A^*,\, a\in A \ra$ are norm dense in $A$;
    \item There exist faithful left/right invariant approximate KMS-weights on $(A, \Gam)$.
\end{itemize}

Given a locally compact quantum group $\G$, the reduced quantum group $C^*$--algebra of $\LIQ$ is defined as
$$C_0(\G):=\overline{\{(\id \ten \om )(W)\; :\; \om \in \TCQ\}}^{\lVert \cdot \rVert}.$$
The restriction of $\Gam$ to $C_0(\G)$ induces a reduced $C^*$--algebraic quantum group structure \cite[Proposition 1.6]{kustermans2003locallyVN}.

A unitary co-representation of a locally compact quantum group $\G$ is a unitary
operator $U\in M(C_0(\G) \ten \mathcal{K}(H_U))\subseteq \LIQ \oten \mathcal{B}(H_U) $ such that $$(\Gam\ten \id)(U)=U_{13}U_{23}.$$
Every unitary co-representation gives rise to a representation of $\LOQ$ via 
$$\pi_U: f\in \LOQ \mapsto (f \ten \id )(U)\in \mathcal{B}(H_U).$$
Two co-representations $U$ and $V$ are unitarily equivalent if there exists a unitary $u:H_V \rightarrow H_U$ such that 
$$(1\ten u^*)U(1\ten u)=V.$$
A co-representation $U$ irreducible if the corresponding representation $\pi_{U}:\LOQ \rightarrow \mathcal{B}(H_U)$ is irreducible. We let $\Irr$ denote the family of (equivalence classes of) irreducible unitary co-representations of $\G$.
\subsection{Compact and discrete quantum groups}
A locally compact quantum group is compact if $C_0(\G)$ is unital, or equivalently the left Haar weight is finite. When $\G$ is compact we denote $C_0(\G)$ by $C(\G)$.

Let $\G$ be compact with normalized Haar state $\vphi$, the following holds \cites{woronowicz1998compact, woronowicz1987compact-matrix-Pseudogroups}: 
\begin{itemize}
	\item Every irreducible co-representation $U^{\al}$ is finite-dimensional and is unitarily equivalent to a sub-representation of W,
	\item Every unitary co-representation of $\G$ can be decomposed into a direct sum of irreducible
	co-representations,
	\item If $u^{\al}\in \Irr$ and $\{e_i\,: i=1,...,n_{\al}\}$ is an orthonormal basis of $H_{\al}$, we obtain the matrix elements $u_{ij}^{\al}=(\id \ten \om_{ij})(u^{\al})\in \LIQ$ satisfying
	$$\Gam(u_{ij}^{\al})=\sum_{k=1}^{n_{\al}}u_{ik}^{\al}\ten u_{kj}^{\al}, \;\;\;\; 1\leq i, j \leq n_{\al},$$
	where $\om_{ij}=\om_{e_{j}, e_{i}}$ are vector functionals relative to the orthonormal basis $\{e_i\}$.
\end{itemize}
The linear space $\pol:=\la u_{ij}^\al\, :\, \al \in \Irr, 1\leq i, j \leq n_{\al}\ra$ is a unital Hopf
*-algebra which is norm dense in $C(\G)$ and weak* dense in $\LIQ$.

For every $\al \in \Irr$ there exists a positive invertible diagonal matrix $F^{\al }=\textnormal{diag}(\lambda_1^{\al}, ..., \lambda_{n_{\al}}^{\al})\in M_{n_{\al}}(\C)\cong \mathcal{B}(H_{\al})$ with $tr(F^{\al })=tr(F^{\al })^{-1}=:d_{\al}$ (the quantum dimension of $\al$) such that the Peter–Weyl orthogonality relations become (see \cite{daws2010OPBiprojectivity} for this convention):
\begin{equation}\label{eq4}
	\vphi((u^{\beta}_{kl})^*\uij)=\delta_{\al \beta}\delta_{ik}\delta_{jl}\frac{1}{\lambda_i^{\al}d_{\al}},\;\;\vphi(u^{\beta}_{kl}(\uij)^*)=\delta_{\al \beta}\delta_{ik}\delta_{jl}\frac{\lambda_j^{\al}}{d_{\al}}.
\end{equation}
For $x \in \LIQ$, we denote by $x\cdot\vphi$ and $\vphi\cdot x$ the elements in $\LOQ$ given by $\la x\cdot \vphi, y \ra=\la\vphi, yx \ra$ and  $\la \vphi \cdot x, y \ra=\la\vphi, xy \ra$. If $x=\uij$ for some $\al \in \Irr$ we use the notation $\vphi_{ij}^{\al}$ for $\uij\cdot\vphi$. Since $\pol$ is dense in $C(\G)$, it follows that $\pol \cdot \vphi$ is dense in $\LOQ$.\\
For $\al \in \Irr$, the character of $\al$ is
$$\chi^{\al}:=(\id\,\oten\, \tr)(u^{\al})=\sum_{i=1}^{n_{\al}}u_{ii}^{\al}\in \LIQ,$$
and the quantum character of $\al$ is
$$\chi^{\al}_q:=(\id\,\oten\, F^{\al })(u^{\al})=\sum_{i=1}^{n_{\al}}\lambda_i^{\al} u_{ii}^{\al}\in \LIQ.$$
For each quantum character $\chi^{\al}_q$, let $\varphi_q^{\al}:= \chi^{\al}_q\cdot \varphi$ be the $\LOQ$ element given by $\la \chi_q^{\al}\cdot \vphi, y \ra=\la\vphi, y\chi_q^{\al} \ra$. By the Peter–Weyl orthogonality relations we have 
$$\la\varphi_q^{\al} \star f,\,u_{kl}^{\beta *}\ra=\la f\star \varphi_q^{\al},\, u_{kl}^{\beta *} \ra=\la f, u_{kl}^{\beta *} \ra \frac{\delta_{\al \beta}}{d_{\al}}$$
for all $f \in \LOQ$ and $\beta \in \Irr$. Moreover, $\overline{\la \varphi_q^{\al}\,|\, \al \in \Irr \ra}$ is a closed ideal in the center, $\mathcal{Z}(\LOQ)$, of $\LOQ$.

We say that $\G$ is discrete if $\wh{\G}$ is compact, or equivalently if $\LOQ$ is unital. As is customary for a discrete quantum group $\G$, we write $\liq$ for $\LIQ$, $c_0(\G)$ for $C_0(\G)$,  $\loq$ for $\LOQ$, and $\ltq$ for $\LTQ$. We have
$$c_0(\G)=c_0-\!\!\!\bigoplus_{\al \in \Irrd}\mathcal{B}(H_{\al})=c_0-\!\!\!\bigoplus_{\al \in \Irrd}M_{n_{\al}}(\C),$$
$$\liq=\ell^{\infty}-\!\!\!\bigoplus_{\al \in \Irrd}\mathcal{B}(H_{\al})=\ell^{\infty}-\!\!\!\bigoplus_{\al \in \Irrd}M_{n_{\al}}(\C).$$
Moreover $\liq \cong M(c_0(\G))$, where $M(c_0(\G))$ is the multiplier algebra of the C*-algebra $c_0(\G)$.

For any locally compact quantum group $\G$, the fundamental unitary operators $W$ and $V$, induce complete contractions \cite{JNR-junge2009representation}
\begin{equation}\label{eq5}
	\tl:\LOQ \rightarrow \mathcal{CB}^{\sigma}(\BLTQ)\;:\; f \mapsto (f \ten \id )(W^*(1\ten \cdot)W),
\end{equation}
and
\begin{equation*}
	\Theta^r:\LOQ\rightarrow \mathcal{CB}^{\sigma}(\BLTQ) \;:\; f \mapsto (\id \ten f)(V(\cdot \ten 1)V^*).
\end{equation*}
Moreover, $\tl$ is an anti-homomorphism from $\LOQ$ into $\mc{CB}_{L^{\infty}(\hat{\mathbb{G}}')}^{\sigma, L^{\infty}(\mathbb{G})}(\mc{B}(L_2(\mathbb{G})))$ and $\Theta^r$ is a homomorphism from $\LOQ$ into $\cb{\LIQH}{\LIQ}{\BLTQ}$, but they are not necessarily onto. In the discrete case, they are both bijective \cite[Theorem 3.4, 3.5]{JNR-junge2009representation}:
\begin{thm}\label{thm1}
	Let $\G$ be a discrete quantum group. Then the map
	\begin{itemize}[leftmargin=2em]
	\item  $\tl$ is a completely isometric algebra anti-isomorphism from $\loq$ onto $\mc{CB}_{L^{\infty}(\hat{\mathbb{G}}')}^{\sigma,\ell^{\infty}(\mathbb{G})}(\bltq)$,
		\item $\Theta^r$  is a completely isometric algebra isomorphism from $\loq$ onto $\cb{\LIQH}{\LIQ}{\bltq}$.
	\end{itemize}
	
\end{thm}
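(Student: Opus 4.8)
The plan is to prove the statement for $\Theta^r$ and to deduce the one for $\tl$ by the symmetric argument, replacing $(V,\LIQH,\text{homomorphism})$ by $(W,\LIQHP,\text{anti-homomorphism})$ and invoking the duality $(\wh\G)\op=\wh{(\G')}$. The discussion preceding the theorem already records that $\Theta^r$ is a completely contractive homomorphism of $\loq$ into $\cb{\LIQH}{\LIQ}{\bltq}$, so what remains is surjectivity together with the fact that $\Theta^r$ is a complete isometry. I would obtain both at once by exhibiting a completely contractive two-sided inverse $\Psi$: once $\Psi\circ\Theta^r=\id$, $\Theta^r\circ\Psi=\id$, and both maps are completely contractive, the chain $\|[f_{ij}]\|=\|\Psi_n([\Theta^r(f_{ij})])\|\le\|[\Theta^r(f_{ij})]\|_{cb}\le\|[f_{ij}]\|$ forces equality at every matrix level, giving the complete isometry for free.

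To define $\Psi$, given $\Phi\in\cb{\LIQH}{\LIQ}{\bltq}$ I would first restrict it to $\LIQ$. Since $\Phi$ leaves $\LIQ$ invariant (the superscript) and is an $\LIQH$-bimodule map (the subscript), the identity $\Gam(x)=V(x\ten1)V^*$ together with the appropriate leg condition for $V$ should force $\Phi|_{\LIQ}$ to intertwine $\Gam$ on one side, that is, to be precisely a completely bounded right multiplier of $\loq$ in the sense of the Preliminaries, with induced map $m$. Converting the bimodule and invariance hypotheses into this one-sided intertwining with $\Gam$ is the technical heart of the argument, and it is where the corepresentation calculus of $V$ (equivalently, the Peter--Weyl structure of the compact dual $\wh\G$ and the decomposition $\liq=\ell^\infty\text{-}\bigoplus_\al\mc B(H_\al)$) does the real work; this is the substance of the general representation theory underlying \thmref{thm1}.

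Now discreteness enters decisively. Because $\wh\G$ is compact, $\loq$ is unital, and its identity $e$ is the counit, a normal state of norm one. The right-multiplier module identity $m(g\star h)=g\star m(h)$ then yields $m(h)=m(h\star e)=h\star m(e)$, so $m$ is inner: it is right convolution by $f:=m(e)\in\loq$, with $\|f\|=\|m(e)\|\le\|m\|_{cb}\le\|\Phi\|_{cb}$ and the analogous bound at every matrix level. I set $\Psi(\Phi):=f$, which is completely contractive by these estimates. It remains to verify $\Psi\circ\Theta^r=\id$ and $\Theta^r\circ\Psi=\id$. The former is immediate, since $\Theta^r(f)|_{\LIQ}=(\id\ten f)\Gam$ is exactly the multiplier inducing $f$, whence $\Psi(\Theta^r(f))=f$. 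For the latter, $\Theta^r(\Psi(\Phi))$ and $\Phi$ are normal $\LIQH$-bimodule maps on $\bltq$ that agree on $\LIQ$; by \eqref{eq2.2.2}, $\bltq$ is the weak*-closed linear span of $\LIQ\cdot\LIQH$, and a normal $\LIQH$-bimodule map is determined by its values on $\LIQ$ through $\Phi(x\h b)=\Phi(x)\h b$, so the two maps coincide. This gives bijectivity, and with the norm estimate above, the complete isometry.

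The hardest step is the middle one: showing that the restriction $\Phi|_{\LIQ}$ is not merely a normal completely bounded map but genuinely a completely bounded multiplier of $\loq$. Everything downstream---innerness, the norm control, and the reconstruction of $\Phi$ from $f$---becomes routine once unitality of $\loq$ is in hand, and it is precisely this unitality that collapses the multipliers to $\loq$ itself. In the non-discrete case the same scheme only recovers the strictly larger algebra $\McbQr$, so it is discreteness (equivalently, compactness of $\wh\G$) that upgrades the representation theorem to an honest isomorphism with $\loq$ rather than with a multiplier algebra.
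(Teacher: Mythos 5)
The paper does not actually prove \thmref{thm1}; it quotes it from Junge--Neufang--Ruan \cite[Theorems 3.4, 3.5]{JNR-junge2009representation}, so your attempt can only be measured against what a self-contained argument would require. Your overall architecture is sound, and it is indeed how the discrete case falls out of the general representation theorem: construct a completely contractive inverse $\Psi$, observe that two mutually inverse complete contractions are complete isometries, and use unitality of $\loq$ (the counit $\epsilon$) to turn the module identity $m(g\star h)=g\star m(h)$ into $m(g)=g\star m(\epsilon)$, collapsing the multiplier algebra to $\loq$ itself. The verifications $\Psi\circ\Theta^r=\id$ and $\Theta^r\circ\Psi=\id$ (the latter via \eqref{eq2.2.2} and normality) are also fine.

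The gap is exactly where you place it, and the tool you point to for closing it is the wrong one. You propose to extract the right-module identity from $\Gam(x)=V(x\ten 1)V^*$ and ``the leg condition for $V$.'' But $V\in\LIQH'\oten\LIQ$, while $\Phi$ is an $\LIQH$-bimodule map: neither leg of $V$ lies in an algebra over which $\Phi$ (or $\id\ten\Phi$) acts as a bimodule map, so $V$ cannot be pulled through and this route stalls. The step does have a short proof, but via the other presentation of the comultiplication, $\Gam(x)=W^*(1\ten x)W$ with $W\in\liq\oten\LIQH$. Since $\id\ten\Phi$ is a normal bimodule map over $\mc{B}(\ltq)\oten\LIQH$ (check on elementary tensors and extend by normality and separate weak* continuity of multiplication), one gets $(\id\ten\Phi)(\Gam(x))=W^*(1\ten\Phi(x))W=\Gam(\Phi(x))$ for $x\in\liq$, where the invariance $\Phi(\liq)\subseteq\liq$ is needed to read the right-hand side as $\Gam(\Phi(x))$; pairing against $g\ten h$ gives precisely $m(g\star h)=g\star m(h)$ for $m=(\Phi|_{\liq})_*$. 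With that line inserted, your argument is complete. (For $\tl$ the roles reverse: there $\Phi$ is an $\LIQH'$-bimodule map, one pulls $V\in\LIQH'\oten\LIQ$ through $\Phi\ten\id$ in the first leg, obtains the left-module identity $m(g\star h)=m(g)\star h$, and concludes $\Phi|_{\liq}=(f\ten\id)\Gam=\tl(f)$ with $f=m(\epsilon)$.)
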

\begin{remark}\cite[Theorem 4.10]{JNR-junge2009representation}
	In general, for a locally compact quantum group $\G$, the maps $\tl$ and $\Theta^r$ extend  respectively to a completely isometric anti-isomorphism and a completely isometric isomorphism of completely contractive Banach algebras:
	$$\tl:M_{cb}^l(\LOQ)\cong \cb{L^{\infty}(\hat{\mathbb{G}}')}{\LIQ}{\BLTQ}, $$
	$$\Theta^r:M_{cb}^r(\LOQ)\cong \cb{\LIQH}{\LIQ}{\BLTQ}.$$
\end{remark}
By the equality $(\wh{\G})\op=\wh{(\G')}$, for $\wh{\G}\op$ we have $\wh{\tl}\op:M_{cb}^l(\LOQHOP)\cong \cb{\LIQ}{\LIQHOP}{\BLTQ}$, and for any $x \in \BLTQ$
\begin{equation}\label{eq6}
	\wh{\tl}\op(\wh{f})(x)= (\wh{f}\ten \id)(\ww^*(1\ten x)\ww),
\end{equation}
where above and throughout we use $\widetilde{W}$ for $\widehat{W}^{\op}$ to simplify the notation.
\begin{remark}\label{r:comm}
    One can show that for any $f\in M_{cb}^r(\LOQ)$ and $\wh{f}\in M_{cb}^l(\LOQHOP)$, the operators $\wh{\tl}\op(\wh{f})$ and $\Theta^r(f)$ commute (see \cite[Theorem 5.1]{JNR-junge2009representation}). Specifically, let $x\in \LIQ$ and $\wh{x}\in \LIQHOP$, then 
    \begin{align*}
        \trf(\tll{f}(x\wh{x}))&=\trf(x\underbrace{\tll{f}(\wh{x})}_{\in \LIQHOP})=\underbrace{\trf(x)}_{\in \LIQ}\tll{f}(\wh{x})\\
       &=\tll{f}(\trf(x)\wh{x})=\tll{f}(\trf(x\wh{x})). 
    \end{align*}
      By normality of $\trf$ and $\tll{f}$ and density (Equation \eqref{eq2.2.2}), we conclude that $$\trf\tll{f}=\tll{f}\trf.$$
\end{remark}
\subsection{Actions and crossed products}
\begin{defn}
	Let $\n$ be a von Neumann algebra. A left action of $\G$ on $\n$ is  a normal, injective unital *-homomorphism $\al :\n \rightarrow \LIQ \oten \n$ satisfying
	$$(\Gam \ten \id)\al=(\id \ten \al)\al.$$
\end{defn}

We define the fixed point algebra $\n^{\al}$ to be $\n^{\al}:=\{x\in \n \;|\; \al(x)=1\ten x\}$, then $\n^{\al} $ is a von Neumann subalgebra of $\n.$

\begin{prop}\cite[Proposition 2.3.3]{vaes2001thesis}
	Let $\al$ be a left action of $\G$ on $\n$. For every $x \in \n^+$,  the element $E_{\al}(x)=(\psi \ten \id )\al(x)$ of $\n_{\ext}^+$ belongs to $(\n^{\al})^+_{\ext}$. Moreover, $E_{\al}:\n^+\rightarrow (\n^{\al})^+_{\ext}\subset \n_{\ext}^+$  is a normal, faithful operator valued weight.
\end{prop}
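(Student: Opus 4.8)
The plan is to exhibit $E_{\al}$ as the composition of the action $\al$ with the slice map $\psi\ten\id$ associated to the right Haar weight, and to read off each asserted property from the corresponding property of these two factors. First I would recall the standard fact (Haagerup, operator-valued weights) that for a normal semifinite faithful weight $\psi$ on $\LIQ$ the slice $\psi\ten\id$ is a normal, faithful operator-valued weight from $(\LIQ\oten\n)^+$ into the extended positive part $\n^+_{\ext}$, determined by $\la(\psi\ten\id)(z),\om\ra=\psi((\id\ten\om)(z))$ for $z\in(\LIQ\oten\n)^+$ and $\om\in\n_*^+$, and enjoying the module property $(\psi\ten\id)((1\ten b)^*z(1\ten b))=b^*(\psi\ten\id)(z)\,b$ for $b\in\n$. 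Composing with the normal injective unital $*$-homomorphism $\al$ then yields additivity, positive homogeneity, normality and faithfulness of $E_{\al}$: normality holds because $\al$ is normal and $\psi\ten\id$ preserves suprema of bounded increasing nets, while faithfulness follows since $E_{\al}(x)=0$ forces $\al(x)=0$ by faithfulness of $\psi\ten\id$, and hence $x=0$ by injectivity of $\al$.

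The crux is to show the range lands in $(\n^{\al})^+_{\ext}$, for which I would prove the invariance identity $\al(E_{\al}(x))=1\ten E_{\al}(x)$, interpreted in the extended positive part of $\LIQ\oten\n$. Applying the slice $\psi\ten\id\ten\id$ in the first leg to the action axiom $(\Gam\ten\id)\al(x)=(\id\ten\al)\al(x)$, the right-hand side becomes $\al((\psi\ten\id)\al(x))=\al(E_{\al}(x))$, because slicing the first leg commutes with $\al$ applied in the remaining legs; the left-hand side becomes $((\psi\ten\id)\Gam\ten\id)\al(x)$, and right invariance of $\psi$, in the equivalent form $(\psi\ten\id)\Gam(a)=\psi(a)\,1$, collapses the first two legs to give $1\ten(\psi\ten\id)\al(x)=1\ten E_{\al}(x)$. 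The resulting identity $\al(E_{\al}(x))=1\ten E_{\al}(x)$ is exactly the statement that $E_{\al}(x)$ is $\al$-fixed, hence belongs to $(\n^{\al})^+_{\ext}$. Combined with the module property over $\n^{\al}$ (which follows from $\al(b)=1\ten b$ for $b\in\n^{\al}$ together with the module property of $\psi\ten\id$ recorded above), this confirms that $E_{\al}$ is an operator-valued weight with values in $(\n^{\al})^+_{\ext}$.

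The main obstacle is making these manipulations rigorous on the extended positive cone rather than on honest elements of $\n$: in general $E_{\al}(x)$ is an unbounded element of $\n^+_{\ext}$, so one must first extend $\al$, $\Gam\ten\id$, and the normal slices $\psi\ten\id$ to normal maps between extended positive cones in Haagerup's sense, and then verify that the action axiom and right invariance persist there, typically by approximating $\psi$ from below by bounded normal positive functionals and passing to suprema. One also needs the identification characterizing $(\n^{\al})^+_{\ext}$ inside $\n^+_{\ext}$ as precisely the $\al$-invariant extended positive elements (equivalently, those whose spectral projections lie in $\n^{\al}$), so that the invariance identity genuinely places $E_{\al}(x)$ in the extended positive part of the fixed-point algebra. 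Once this extended-positive-part bookkeeping is in place, the remaining verifications are the routine algebraic identities sketched above.
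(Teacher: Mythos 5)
The paper offers no proof of this proposition; it is quoted verbatim from the cited source \cite[Proposition 2.3.3]{vaes2001thesis}. Your argument is correct and is essentially the standard one given there: compose the normal faithful slice weight $\psi\ten\id$ with the normal injective $*$-homomorphism $\al$ to get a normal faithful operator-valued weight into $\n^+_{\ext}$, and then slice the coaction identity $(\Gam\ten\id)\al=(\id\ten\al)\al$ with $\psi$ in the first leg, using right invariance in the form $(\psi\ten\id)\Gam(a)=\psi(a)1$, to obtain $\al(E_{\al}(x))=1\ten E_{\al}(x)$ and hence $E_{\al}(x)\in(\n^{\al})^+_{\ext}$; the extended-positive-part technicalities you flag (extending $\al$ and the slices to the extended cones and approximating $\psi$ by bounded normal functionals) are exactly the points that need care and are handled as you describe.
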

An action $\al$ of $\G$ on a von Neumann algebra $\n$ induces a right $\LOQ$--module structure on $\n$ and a left $\LOQ$--module structure on $\n_*$ defined  by
\begin{equation}
    \begin{aligned}
        & n\star_{\al}f:=(f\ten \id )\al(n),\; n\in \n\, f\in \LOQ,\\
        &f \leftindex_\al {\star}\, \omega:=(f\ten \omega)\circ \al,\; \omega\in \n_*,\, f\in \LOQ.
    \end{aligned}
\end{equation}

\begin{remark}\label{rem2.5.3}
\begin{enumerate}
    \item For any $n\in \n $, $\omega\in \n_*$,  and $f\in \LOQ$ we have
\[\la n\star_{\al}f,\, \om\ra=\la n, \,f \leftindex_\al {\star}\,\om\ra.\]
\item Let $\LOQ$ be unital ($\G$ is discrete) and denote the unit by $\epsilon$. Then $\epsilon\leftindex_{\al}{\star} \,\omega=\omega$ for any $\omega \in \n_*$. This is easily seen on elements of the form $\omega=f \leftindex_{\al}{\star}\om'$, which span a dense set in view of surjectivity of the module map $\alpha_*:\ell^1(\G)\pten\n_*\to\n_*$. Consequently, for any $n \in \n$, $n\star_{\al}\epsilon=n$.
\end{enumerate}  
\end{remark}
\begin{defn}
	Let $\al$ be an action of $\G$ on $\n$. The crossed product of $\n$ with respect to $\al$ is a von Neumann subalgebra of $\BLTQ \oten \n$ generated by $\al(\n)$ and $\LIQH$. We denote this crossed product by $\crs{\G}{\n}$. So, we have $$\crs{\G}{\n}=(\al(\n)\cup \LIQH\ten \C)''.$$
\end{defn}
There is a unique action $\wh{\al}$ of $\wh{\G}\op$ on the crossed product $\crs{\LIQ}{\n}$ such that 
\begin{gather}
	\begin{aligned}\label{eq7}
		\wh{\al}(\al(x))&=1 \ten \al(x)\;\; \;\;\text{for all}\; x\in \n,  \\
		\wh{\al}(a\ten 1)&=\wh{\Gam}\op(a)\ten 1 \;\; \text{for all}\; a\in \wh{\m}\op=\LIQHOP.
	\end{aligned}
\end{gather}
Moreover the fundamental unitary $\widetilde{W}$ of $\wh{\G}\op$ implements the action via
\begin{equation}\label{eq8}
	\wh{\al}(z)=(\ww^* \ten 1)(1 \ten z)(\ww \ten 1), \ \ z\in \LIQ\, {}_{\al}\ltimes \n.
\end{equation}
It turns out that the operator-valued weight corresponding to $\wh{\al}$ is semi-finite \cite[Theorem 2.4.6]{vaes2001thesis}:
\begin{thm}\label{thm6}
	We have 
	\begin{align*}
		(\crs{\G}{\n})^{\wh{\al}}=\al(\n)=\{z \in \LIQ \oten \n \;|\;(\id\ten \al)(z)=(\Gam\ten \id)(z)\}.
	\end{align*}
Furthermore, $E_{\wh{\al}}:(\crs{\G}{\n})^+\ni x \mapsto (\wh{\psi}\op\ten \id)\wh{\al}(x)\in((\crs{\G}{\n})^{\wh{\al}})_{\ext}^+$ is a normal, faithful, semi-finite operator valued weight.
\end{thm}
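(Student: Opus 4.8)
The plan is to treat the three assertions in turn, working under the paper's standing hypothesis that $\G$ is discrete. The payoff of discreteness is that $\wh{\G}\op$ is compact, so its right Haar weight $\wh{\psi}\op$ is \emph{finite}; this is precisely what will trivialize the semifiniteness below (for a general locally compact $\G$ that semifiniteness is the substantial content of \cite[Theorem 2.4.6]{vaes2001thesis}).

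\emph{The equality $\al(\n)=\{z\in\LIQ\oten\n:(\id\ten\al)(z)=(\Gam\ten\id)(z)\}$.} The inclusion $\subseteq$ is immediate from the action axiom $(\Gam\ten\id)\al=(\id\ten\al)\al$. For $\supseteq$ I would slice the first leg with the counit $\epsilon$ of $\G$, which for a discrete quantum group is a normal $*$-homomorphism $\liq\to\C$ satisfying $(\epsilon\ten\id)\Gam=\id$ and — as a consequence of the action axiom together with injectivity of $\al$ — also $(\epsilon\ten\id)\al=\id$. Applying $(\epsilon\ten\id\ten\id)$ to the intertwining relation turns the left-hand side into $\al\big((\epsilon\ten\id)(z)\big)$ and the right-hand side into $z$, so $z=\al\big((\epsilon\ten\id)(z)\big)\in\al(\n)$.

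\emph{The equality $(\crs{\G}{\n})^{\wh{\al}}=\al(\n)$.} The inclusion $\al(\n)\subseteq(\crs{\G}{\n})^{\wh{\al}}$ is exactly the first line of \eqref{eq7}. For the reverse inclusion — the heart of the matter — I would exploit finiteness of $\wh{\psi}\op$: then $E_{\wh{\al}}=(\wh{\psi}\op\ten\id)\wh{\al}$ is a bounded normal map and $P:=\wh{\psi}\op(1)^{-1}E_{\wh{\al}}$ is a normal norm-one projection restricting to the identity on $(\crs{\G}{\n})^{\wh{\al}}$ (if $\wh{\al}(z)=1\ten z$ then $E_{\wh{\al}}(z)=\wh{\psi}\op(1)z$). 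The decisive computation is that on the weak*-dense linear span of the elements $\al(n)(a\ten1)$ with $n\in\n$ and $a\in\LIQH$ one has
$$E_{\wh{\al}}\big(\al(n)(a\ten1)\big)=(\wh{\psi}\op\ten\id\ten\id)\big((1\ten\al(n))(\wh{\Gam}\op(a)\ten1)\big)=\wh{\psi}\op(a)\,\al(n)\in\al(\n),$$
the last step using right-invariance $(\wh{\psi}\op\ten\id)\wh{\Gam}\op(a)=\wh{\psi}\op(a)1$. By normality and weak*-density the range of $E_{\wh{\al}}$ is contained in $\al(\n)$; since $P$ fixes $(\crs{\G}{\n})^{\wh{\al}}$ pointwise, $(\crs{\G}{\n})^{\wh{\al}}=P\big((\crs{\G}{\n})^{\wh{\al}}\big)\subseteq\al(\n)$, and equality follows.

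\emph{The operator-valued weight.} Normality and faithfulness I would obtain directly by applying \cite[Proposition 2.3.3]{vaes2001thesis} (the construction $E_\al=(\psi\ten\id)\al$) to the action $\wh{\al}$ of $\wh{\G}\op$ on $\crs{\G}{\n}$: it yields that $E_{\wh{\al}}$ is a normal, faithful operator-valued weight onto $\big((\crs{\G}{\n})^{\wh{\al}}\big)^+_{\ext}=(\al(\n))^+_{\ext}$, using the equality just proved. Semifiniteness is then free, since $E_{\wh{\al}}(1)=\wh{\psi}\op(1)1<\infty$. The step I expect to require the most care is the weak*-density of $\la\al(\n)(\LIQH\ten1)\ra$ in $\crs{\G}{\n}$ — that products taken in a single order already span a weak*-dense $*$-subalgebra — which rests on the commutation relation between the two generating algebras; beyond the discrete case it is instead the semifiniteness of $E_{\wh{\al}}$ that is genuinely hard, and there one must fall back on \cite[Theorem 2.4.6]{vaes2001thesis}.
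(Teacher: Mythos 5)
This statement is recalled in the paper's preliminaries and is not proved there at all: it is quoted verbatim from Vaes's thesis (\cite[Theorem 2.4.6]{vaes2001thesis}), so there is no in-paper proof to compare against. Judged on its own terms, your argument is sound \emph{for discrete $\G$}: the counit slicing $(\epsilon\ten\id)\al=\id$ (which is exactly Remark \ref{rem2.5.3}(2)) correctly identifies $\{z:(\id\ten\al)(z)=(\Gam\ten\id)(z)\}$ with $\al(\n)$; the computation $E_{\wh{\al}}(\al(n)(a\ten 1))=\wh{\psi}\op(a)\,\al(n)$ is correct and, combined with finiteness of $\wh{\psi}\op$ and the weak*-density of $\la\al(\n)(\LIQH\ten 1)\ra$ (which is indeed the one external fact you must import from the crossed-product construction), does yield $(\crs{\G}{\n})^{\wh{\al}}\subseteq\al(\n)$ via the normal projection $P$; and semifiniteness is then vacuous. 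This is the standard streamlined argument when the dual is compact, and it is genuinely more elementary than Vaes's proof, which must work without a bounded counit and without finiteness of $\wh{\psi}\op$.

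The one substantive caveat is scope. Theorem \ref{thm6} appears in Section 2.3, where $\G$ is an arbitrary locally compact quantum group; the specialization to discrete $\G$ happens only in the Remark that follows (where $E_{\wh{\al}}$ becomes a genuine conditional expectation). There is no ``standing hypothesis'' of discreteness at that point, and both pillars of your argument --- the normal counit on $\liq$ and the finiteness of $\wh{\psi}\op$ --- fail for general $\G$, where the semifiniteness of $E_{\wh{\al}}$ and the characterization of $\al(\n)$ as the intertwiner space are precisely the hard content of \cite[Theorem 2.4.6]{vaes2001thesis}. You flag this honestly, but as written your proof establishes the theorem only in the special case in which the paper later applies it, not in the generality in which it is stated.
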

Since the right Haar weight $\wh{\psi}\op$ of $\wh{\G}\op$ is equal to the left Haar weight $\wh{\vphi}$ of $\wh{\G}$, we write $E_{\wh{\al}}(x)=(\wh{\vphi}\ten \id)\wh{\al}(x)$. When $\al$ is clear from context, we often simply write $E$ instead of $E_{\wh{\al}}$.
\begin{remark}
    If $\G$ is discrete then $E_{\wh{\al}}$ is a conditional expectation from the crossed product
     $\G \prescript{}{\al}\ltimes\, \n$ onto the fixed point algebra of $\wh{\al}$ i.e. $\al(\n)$.
\end{remark}
The dual action $\wh{\al}$ gives a right $\LOQHOP$--module structure on the crossed product $\crs{\G}{\n}$ via
$$T\cdot \wh{f}:=(\wh{f}\ten \id \ten \id)(\wh{\al}(T)).$$
It follows from equations \eqref{eq6} and \eqref{eq8} that
\begin{equation}\label{eqbeforeDef2.10}
T\cdot \wh{f}=(\wh{\tl}\op(\wh{f}) \ten \id)(T), \;\; \wh{f} \in \LOQHOP,\;\;T \in \crs{\G}{\n}.
\end{equation}

\begin{defn}
    For a discrete quantum group $\G$, a left action of $\G$ on a $C^*$--algebra $A$ is a non-degenerate $*$--homomorphism $\al: A \rightarrow M(c_0(\G)\mten A)$ satisfying
    \begin{itemize}
        \item $(\Gam \ten \id)\al=(\id \ten \al)\al$;
        \item $\la (c_0(\G)\otimes 1)\al(A)\ra$ is norm dense in $c_0(\G)\mten A$ (the Podle\'{s} condition). 
    \end{itemize}
\end{defn}
Then $\overline{\la \al(A)(C(\wh{\G})\otimes 1) \ra}^{\lVert \cdot \rVert}$ is a $C^*$--algebra \cite{vaes2005newApproach}. For an action $\al$ of a discrete quantum group $\G$ on a $C^*$--algebra $A$, the reduced crossed product is the $C^*$--algebra generated by $\al(A)$ and $C(\wh{\G})$. We denote the reduced crossed product by
$$\G \leftindex_\al {\ltimes}_r A:=\overline{\la \al(A)(C(\wh{\G})\otimes 1) \ra}^{\lVert \cdot \rVert}\subseteq M(\kltq \mten A).$$
Using the universal representation of $A$, we consider $\al(A)\subseteq M(c_0(\G)\mten A)$ contained in $\liq \oten A^{**}$ and $\G \leftindex_\al {\ltimes}_r A$ contained in $\bltq\oten A^{**}$.  

\section{Fej\'{e}r representations}
In this section we obtain a Fej\'{e}r representation for any elements in $C^*$-- and von Neumann algebraic crossed products by a discrete quantum group with the approximation property. 

A locally compact quantum group has the approximation property (AP) if there
exists a net $(\wh{f_{\iota}})\subseteq\LOQH$ such that $\wh{\tl}(\wh{f_{\iota}})\rightarrow \id_{\LIQH}$ in the stable point-weak* topology, i.e., for any (separable) Hilbert space $H$, $\wh{\tl}(\wh{f_{\iota}})\ten\id_{\BH} \rightarrow \id_{\LIQH\oten\mc{B}(H)}$ in the weak* topology (see, e.g., \cite{crann2019inneramenability,krausRuan1999AP-Kac,daws2024approximation}). As shown by Haagerup-Kraus \cite[Propposition 1.7]{haagerup-Kraus1994approximation} we may replace $\BH$ with a von Neumann algebra $\n$ and obtain the weak* convergence of $\wh{\tl}(\wh{f_{\iota}})\ten\id_{\n} \rightarrow \id_{ \LIQH\oten\n}$. Daws-Krajczok-Voigt in \cite[Proposition 4.3]{daws2024approximation} proved that $\G$ has the AP if and only if $\G'$ has the AP. Therefore, we can assume that $\wh{\tl}\op(\wh{f_{\iota}})\rightarrow \id_{\LIQHOP}$ in stable point-weak* topology. They also showed that the AP is equivalent to the existence of a net $(\wh{f_{\iota}})\subseteq\LOQH$ such that $\wh{f_{\iota}}\rightarrow 1$ in the weak* topology of $\McbQHl$ \cite[Theorem 4.4]{daws2024approximation}.

We require the following lemma.

\begin{lem}\label{lem3.3}
     Let $\G$ be a discrete quantum group. Then for any  $\wh{f}\in \polh\cdot \wh{\vphi}:=\la\wh{\vphi}_{ij}^{\beta}\, :\, \beta \in \Irrd, i,\,  j =1,\cdots n_{\beta}\ra$ there is a finite subset $\mc{F}\subseteq \Irrd$ such that for any action $\alpha$ of $\G$ on a von Neumann algebra $\n$
    \begin{equation}\label{eq9lem3.3}
       (\tf\ten \id)(T)= \sum_{\beta\in \mc{F}}\sum _{i,j,k=1}^{n_{\beta}}\frac{d_{\beta}}{\lambda^{\beta}_i}\la \wh{f}, \wh{u}_{ji}^{\beta} \ra E(T((\wh{u}_{ki}^{\beta})^*\otimes 1))(\wh{u}_{kj}^{\beta}\otimes 1)
    \end{equation}
    for all $T\in \crs{\G}{\n}$.
\end{lem}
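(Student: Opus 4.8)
The plan is to verify \eqref{eq9lem3.3} by reducing, through linearity in $\wh{f}$ and weak*-continuity in $T$, to a single matrix coefficient tested against the generators $T = \al(n)(\wh{u}_{rs}^{\delta} \ten 1)$ of the crossed product.

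First I would observe that both sides of \eqref{eq9lem3.3} are normal in $T$: the left-hand side because $\tf = \wh{\tl}\op(\wh{f}\,) \in \mathcal{CB}^{\sigma}(\BLTQ)$ and ampliation by $\id_{\n}$ preserves normality, and the right-hand side because $E = E_{\wh{\al}}$ is a normal conditional expectation onto $\al(\n)$ (the remark following \thmref{thm6}) while the remaining operations are multiplications by the fixed elements $(\wh{u}_{ki}^{\beta})^* \ten 1$ and $\wh{u}_{kj}^{\beta} \ten 1$. Since $\la \al(\n)(\LIQH \ten 1) \ra$ is weak*-dense in $\crs{\G}{\n}$ (the Fourier picture of the crossed product, cf.\ \cite{vaes2001thesis}) and $\polh$ is weak*-dense in $\LIQH$, it suffices to check \eqref{eq9lem3.3} on $T = \al(n)(\wh{u}_{rs}^{\delta} \ten 1)$ for $n \in \n$, $\delta \in \Irrd$, $1 \le r,s \le n_{\delta}$. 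By linearity I further reduce to $\wh{f} = \wh{\vphi}_{pq}^{\gamma}$ and take $\mc{F}$ to be any finite set containing the conjugates $\bar{\beta}$ of the representations $\beta$ occurring in $\wh{f}$.

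For the left-hand side I use \eqref{eqbeforeDef2.10} to write $(\tf \ten \id)(T) = T \cdot \wh{f} = (\wh{f} \ten \id \ten \id)(\wh{\al}(T))$, and then apply the defining relations \eqref{eq7} of the dual action together with the coproduct of a matrix coefficient. For $\wh{\G}\op$ the latter is obtained from the compact-type formula $\wh{\Gam}(\wh{u}_{rs}^{\delta}) = \sum_t \wh{u}_{rt}^{\delta} \ten \wh{u}_{ts}^{\delta}$ by applying the flip, giving $\wh{\Gam}\op(\wh{u}_{rs}^{\delta}) = \sum_t \wh{u}_{ts}^{\delta} \ten \wh{u}_{rt}^{\delta}$; this leads to $$(\tf \ten \id)(T) = \sum_{t=1}^{n_{\delta}} \la \wh{f}, \wh{u}_{ts}^{\delta} \ra\, \al(n)(\wh{u}_{rt}^{\delta} \ten 1).$$ For the right-hand side I exploit that $E$ is an $\al(\n)$-bimodule map fixing $\al(\n)$, so $E(T(\wh{u}_{ki}^{\beta})^* \ten 1) = \al(n)\, E(\wh{u}_{rs}^{\delta} (\wh{u}_{ki}^{\beta})^* \ten 1)$, and that $E$ restricted to $\LIQH \ten 1$ is the Haar state: $E(\wh{b} \ten 1) = (\wh{\vphi} \ten \id)\wh{\Gam}\op(\wh{b}) \ten 1 = \wh{\vphi}(\wh{b})(1 \ten 1)$ by bi-invariance. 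The Peter--Weyl relations \eqref{eq4} for $\wh{\G}$ then give $\wh{\vphi}(\wh{u}_{rs}^{\delta} (\wh{u}_{ki}^{\beta})^*) = \delta_{\beta\delta}\delta_{kr}\delta_{is}\lambda_i^{\beta}/d_{\beta}$; substituting, the factors $d_{\beta}/\lambda_i^{\beta}$ cancel and the Kronecker deltas collapse the triple sum to $\beta = \delta$, $k = r$, $i = s$, reproducing $\sum_j \la \wh{f}, \wh{u}_{js}^{\delta} \ra\, \al(n)(\wh{u}_{rj}^{\delta} \ten 1)$ when $\delta \in \mc{F}$ and $0$ otherwise.

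Finally I would match the two computations. When $\delta \in \mc{F}$ they agree after relabelling the summation index. When $\delta \notin \mc{F}$ the right-hand side vanishes, and so does the left, because $\la \wh{f}, \wh{u}_{ts}^{\delta} \ra = \wh{\vphi}(\wh{u}_{ts}^{\delta} \wh{u}_{pq}^{\gamma})$ is zero unless $\delta \otimes \gamma$ contains the trivial representation, i.e.\ unless $\delta = \bar{\gamma} \in \mc{F}$; this same observation shows the set of contributing $\delta$ is finite, so $\mc{F}$ can indeed be chosen finite. I expect the main obstacle to lie in the careful bookkeeping of tensor legs and indices when pushing the coproduct of a matrix coefficient through the dual action (securing the flip in $\wh{\Gam}\op$ and the correct placement of the acting leg), and in justifying the weak*-density that underlies the reduction to generators; once the indices are aligned, the orthogonality computation is routine.
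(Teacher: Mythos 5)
Your argument is correct, but it is organized quite differently from the paper's. The paper never reduces to generators: it first establishes the operator identity \eqref{eq10},
$$E(T((\wh{u}_{ki}^{\beta})^*\otimes 1))(\wh{u}_{kj}^{\beta}\otimes 1)=\sum_{l=1}^{n_\beta}\bigl[(\wh{\tl}\op((\wh{u}_{li}^{\beta})^*\cdot\wh{\vphi})\ten \id )(T)\bigr]((\wh{u}_{kl}^{\beta})^*\wh{u}_{kj}^{\beta}\ten 1),$$
for an \emph{arbitrary} $T\in\crs{\G}{\n}$, by pushing $(\wh{u}_{ki}^{\beta})^*\ten 1$ through the dual action via \eqref{eq7} and absorbing one leg of $\wh{\Gam}\op((\wh{u}_{ki}^{\beta})^*)$ into the weight defining $E$; it then expands $\wh{f}$ in the basis $(\wh{u}_{ij}^{\beta})^*\cdot\wh{\vphi}$ using \eqref{eq4} and collapses the $k$-sum by unitarity of $\wh{u}^{\beta}$. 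You instead verify \eqref{eq9lem3.3} directly on the generators $\al(n)(\wh{u}_{rs}^{\delta}\ten 1)$, using that $E$ is an $\al(\n)$-bimodule map restricting to $\wh{\vphi}(\cdot)1$ on $\LIQH\ten 1$, and extend by normality; your index bookkeeping and the identification of the contributing $\delta$ as $\bar{\gamma}$ both check out. What your route costs is the reliance on the weak*-density of $\la\al(\n)(\LIQH\ten 1)\ra$ in $\crs{\G}{\n}$ — true and standard (it is built into the definition in the $C^*$-setting and goes back to Vaes's thesis for the von Neumann setting), but it should be cited rather than merely flagged, and it is precisely the structural input the paper's proof avoids, since \eqref{eq10} holds verbatim for every $T$ with no approximation. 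Two smaller points: the orthogonality value you need on the right-hand side, $\wh{\vphi}(\wh{u}_{rs}^{\delta}(\wh{u}_{ki}^{\beta})^*)$, is the one actually recorded in \eqref{eq4}, whereas the vanishing of $\wh{\vphi}(\wh{u}_{ts}^{\delta}\wh{u}_{pq}^{\gamma})$ for $\delta\neq\bar{\gamma}$ uses the conjugate-representation form of orthogonality not stated in the paper; and your choice of $\mc{F}$ (conjugates of the representations appearing in $\wh{f}$) is compatible with the paper's, since $(\wh{u}_{ij}^{\beta})^*\cdot\wh{\vphi}$ is a combination of the $\wh{\vphi}_{pq}^{\bar{\beta}}$. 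What your approach buys is concreteness — both sides are computed in closed form on a spanning set — while the paper's buys reusability: the same identity \eqref{eq10} is invoked again for the $C^*$-crossed product in Theorem \ref{thm3.9} and in Proposition \ref{prop3.4.5}, where having it for arbitrary $T$ rather than for a dense subspace is what makes those arguments immediate.
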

\begin{proof}
	Let  $\beta \in \Irr$ and $i, j, k \in \{1, \cdots n_{\beta}\}$. Then
 \begin{align}
		E(T(\uis{ki}{\beta}\ten1))(\uih{kj}{\beta}\ten 1)&=[(\wh{\vphi}\ten \id \ten \id)\wh{\al}(T(\uis{ki}{\beta}\ten 1))](\uih{kj}{\beta}\ten 1) \nonumber\\
		&=[(\wh{\vphi}\ten \id \ten \id)\wh{\al}(T)\wh{\al}(\uis{ki}{\beta}\ten 1)](\uih{kj}{\beta}\ten 1) \nonumber\\
	\eqref{eq7}	\;\;\;\;\;\;\;\;\;\;\;\;\;\;\;\;\;\;\;\;&=[(\wh{\vphi}\ten \id \ten \id)\wh{\al}(T)(\wh{\Gam}\op(\uis{ki}{\beta}))\ten 1)](\uih{kj}{\beta}\ten 1) \nonumber\\
		&=[(\wh{\vphi}\ten \id \ten \id)\wh{\al}(T)(\sum_{l=1}^{n_\beta}\Sigma(\uis{kl}{\beta}\ten\uis{li}{\beta})\ten 1)](\uih{kj}{\beta}\ten 1) \nonumber\\
			&=[(\wh{\vphi}\ten \id \ten \id)\wh{\al}(T)(\sum_{l=1}^{n_\beta}(\uis{li}{\beta}\ten\uis{kl}{\beta})\ten 1)](\uih{kj}{\beta}\ten 1) \nonumber\\
				&=\sum_{l=1}^{n_\beta}[(\wh{\vphi}\ten \id \ten \id)\wh{\al}(T)(\uis{li}{\beta}\ten 1\ten 1)](\uis{kl}{\beta}\uih{kj}{\beta}\ten 1) \nonumber\\
			&=\sum_{l=1}^{n_\beta}[(\uis{li}{\beta}\cdot\wh{\vphi}\ten \id \ten \id)\wh{\al}(T)](\uis{kl}{\beta}\uih{kj}{\beta}\ten 1) \nonumber\\
		\eqref{eqbeforeDef2.10}	\;\;\;\;\;\;\;\;\;\;\;\;\;\;\;\;\;\;\;\;
		&=\sum_{l=1}^{n_\beta}[(\wh{\tl}\op(\uis{li}{\beta}\cdot\wh{\vphi})\ten \id )(T)](\uis{kl}{\beta}\uih{kj}{\beta}\ten 1)\nonumber
	\end{align}
	Therefore 
 \begin{equation}\label{eq10}
     E(T((\wh{u}_{ki}^{\beta})^*\otimes 1))(\wh{u}_{kj}^{\beta}\otimes 1)=\sum_{l=1}^{n_\beta}[(\wh{\tl}\op(\uis{li}{\beta}\cdot\wh{\vphi})\ten \id )(T)](\uis{kl}{\beta}\uih{kj}{\beta}\ten 1).
 \end{equation}

By assumption, $\wh{f}\in \la\wh{\vphi}_{ij}^{\beta}\, :\, \beta \in \Irrd, i,\,  j =1,\cdots n_{\beta}\ra$ admits a decomposition of the form
$$\wh{f}=\sum_{\beta \in \mc{F}}\sum_{i,j=1}^{n_{\beta}}c_{ij}^{\beta}(\uis{ij}{\beta}\cdot \wh{\varphi}),$$
where $\mc{F}$ is a finite subset of $\Irrd$ and $c_{ij}^\beta\in\mathbb{C}$. By the orthogonality relations \eqref{eq4}, for $\gamma \in\mathcal{F}$,
\begin{align*}
	\la \wh{f}, \uih{kl}{\gamma} \ra&=\la \sum_{\beta \in \mc{F}}\sum_{i,j=1}^{n_{\beta}}c_{ij}^{\beta}(\uis{ij}{\beta}\cdot \wh{\varphi}), \uih{kl}{\gamma}\ra\\
    &=\sum_{\beta \in \mc{F}}\sum_{i,j=1}^{n_{\beta}}c_{ij}^{\beta}\wh{\vphi}(\uih{kl}{\gamma}\uis{ij}{\beta})\\
	&=\sum_{\beta \in \mc{F}}\sum_{i,j=1}^{n_{\beta}}c_{ij}^{\beta} \delta_{\beta\gamma}\delta_{ik}\delta_{jl}\frac{\lambda_j^{\beta}}{d_{\beta}}\\
    &=c_{kl}^{\gamma}\frac{\lambda_l^{\gamma}}{d_{\gamma}}.
\end{align*}
So, \begin{equation}\label{eq11}
	\wh{f}=\sum_{\beta \in \mc{F}}\sum_{i,j=1}^{n_{\beta}}\frac{d_{\beta}}{\lambda_j^{\beta}}\la \wh{f}, \uih{ij}{\beta} \ra(\uis{ij}{\beta}\cdot \wh{\varphi}).
\end{equation}
Now, using \eqref{eq10} and \eqref{eq11} we have
	\begin{align}\label{eq12theta}
		\sum_{\beta\in \mc{F}}\sum _{i,j,k=1}^{n_\beta}\frac{d_{\beta}}{\lambda^{\beta}_i}\la \wh{f}, \wh{u}_{ji}^{\beta} \ra &E(T((\wh{u}_{ki}^{\beta})^*\otimes 1))(\wh{u}_{kj}^{\beta}\otimes 1)\nonumber\\
		&=\sum_{\beta\in \mc{F}}\sum _{i,j,k,l=1}^{n_\beta}\frac{d_{\beta}}{\lambda^{\beta}_i}\la \wh{f}, \wh{u}_{ji}^{\beta} \ra (\wh{\tl}\op (\uis{li}{\beta}\cdot \phih)\ten  \id)(T)(\uis{kl}{\beta}\uih{kj}{\beta}\ten 1)\nonumber\\
		&=\sum_{\beta\in \mc{F}}\sum _{i,j,l=1}^{n_\beta}\frac{d_{\beta}}{\lambda^{\beta}_i}\la \wh{f}, \wh{u}_{ji}^{\beta} \ra (\wh{\tl}\op (\uis{li}{\beta}\cdot \phih)\ten  \id)(T)(\sum_{k=1}^{n_\beta}\uis{kl}{\beta}\uih{kj}{\beta}\ten 1)\nonumber\\
		&=\sum_{\beta\in \mc{F}}\sum _{i,j,l=1}^{n_\beta}\frac{d_{\beta}}{\lambda^{\beta}_i}\la \wh{f}, \wh{u}_{ji}^{\beta} \ra (\wh{\tl}\op (\uis{li}{\beta}\cdot \phih)\ten  \id)(T)(\delta_{lj}\ten 1)\nonumber\\
		&=\sum_{\beta\in \mc{F}}\sum _{i,j=1}^{n_\beta}\frac{d_{\beta}}{\lambda^{\beta}_i}\la \wh{f}, \wh{u}_{ji}^{\beta} \ra (\wh{\tl}\op (\uis{ji}{\beta}\cdot \phih)\ten  \id)(T)\nonumber\\
		&=\wh{\tl}\op (\sum_{\beta\in \mc{F}}\sum _{i,j=1}^{n_\beta}\frac{d_{\beta}}{\lambda^{\beta}_i}\la \wh{f}, \wh{u}_{ji}^{\beta} \ra\uis{ji}{\beta}\cdot \phih)\ten  \id)(T)\nonumber\\
		&=(\tf\ten \id)(T).
	\end{align}
 This proves the lemma.
\end{proof}
\begin{thm} \label{thm3.5}
	Let $\G$ be a discrete quantum group. Then the following are equivalent
 \begin{enumerate}
     \item $\G$ has the AP.
     \item There exist nets $(\wh{f_{\iota}})_{\iota \in I}\subseteq \polh\cdot \wh{\vphi} \subseteq \LOQH$ and $(\mc{F}_{\iota})_{\iota\in I}$,consisting of finite subsets of $\Irrd$, such that for any action $\alpha$ of $\G$ on a von Neumann algebra $\n$ and any $T\in \crs{\G}{\n}$, we have
	\begin{equation}\label{eq13fejer}
	    T=w^*-\lim_{\iota}\sum_{\beta\in \mc{F}_{\iota}}\sum _{i,j,k=1}^{n_\beta}\frac{d_{\beta}}{\lambda^{\beta}_i}\la \wh{f_{\iota}}, \wh{u}_{ji}^{\beta} \ra E(T((\wh{u}_{ki}^{\beta})^*\otimes 1))(\wh{u}_{kj}^{\beta}\otimes 1),
	\end{equation}
 \end{enumerate} 
 where $E:=E_{\h{\al}}$ is the operator valued weight induced by $\h{\al}$.
\end{thm}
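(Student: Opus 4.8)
The plan is to read \eqref{eq13fejer} through \lemref{lem3.3}: the entire right-hand side of \eqref{eq13fejer} is, for $\wh{f}=\wh{f_{\iota}}\in\polh\cdot\wh{\vphi}$, exactly $(\tf\ten\id)(T)$. Hence the assertion in $(2)$ is precisely that $(\wh{\tl}\op(\wh{f_{\iota}})\ten\id_{\n})(T)\to T$ weak$^*$ for every action $\al$ of $\G$ on a von Neumann algebra $\n$ and every $T\in\crs{\G}{\n}\subseteq\bltq\oten\n$; that is, $(2)$ is a reformulation of the stable point-weak$^*$ convergence $\wh{\tl}\op(\wh{f_{\iota}})\ten\id_{\n}\to\id$ once the multipliers are taken to lie in $\polh\cdot\wh{\vphi}$. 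With this reduction both implications become transparent.

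For $(2)\Rightarrow(1)$, I would specialize to the trivial action $\al(n)=1\ten n$ of $\G$ on $\n=\BH$, whose crossed product equals $\LIQHOP\oten\BH$ (generated by $\LIQH\ten 1$ and $1\ten\BH$). By \lemref{lem3.3}, hypothesis $(2)$ then reads $(\wh{\tl}\op(\wh{f_{\iota}})\ten\id_{\BH})(T)\to T$ weak$^*$ for all $T\in\LIQHOP\oten\BH$ and all separable $H$, which is exactly the stable point-weak$^*$ convergence $\wh{\tl}\op(\wh{f_{\iota}})\to\id_{\LIQHOP}$. As recalled at the start of the section, this is equivalent to the AP of $\G'$, hence of $\G$ \cite{daws2024approximation}.

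For $(1)\Rightarrow(2)$, I would first use the AP to produce a net in $\LOQH$ converging to $1$ weak$^*$ in $\McbQHl$; such a net is automatically bounded in the multiplier norm, since weak$^*$-convergent nets in a dual Banach space are bounded, and by \eqref{eq3} the stable point-weak$^*$ topology agrees with the weak$^*$ topology of $\McbQHl$ on bounded sets \cite{daws2024approximation}. The crux is to move this net into $\polh\cdot\wh{\vphi}$: since $\polh\cdot\wh{\vphi}$ is norm-dense in $\LOQH$ and $\|\cdot\|_{\McbQHl}\le\|\cdot\|_{\LOQH}$, every member $\wh{g_{\iota}}$ of the AP net (with $\|\wh{g_{\iota}}\|_{\McbQHl}\le C$) is a multiplier-norm limit of elements of a fixed ball $B=\{\wh{f}\in\polh\cdot\wh{\vphi}:\|\wh{f}\|_{\McbQHl}\le C+1\}$, so $\wh{g_{\iota}}\in\overline{B}^{w^*}$; as $\wh{g_{\iota}}\to 1$ weak$^*$ and $\overline{B}^{w^*}$ is weak$^*$-compact by Banach--Alaoglu, we obtain $1\in\overline{B}^{w^*}$ and may extract a bounded net $(\wh{f_{\iota}})\subseteq\polh\cdot\wh{\vphi}$ with $\wh{f_{\iota}}\to 1$ weak$^*$ in $\McbQHl$.

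Finally, being bounded and weak$^*$-convergent, this net yields $\wh{\tl}\op(\wh{f_{\iota}})\ten\id_{\n}\to\id$ in the stable point-weak$^*$ topology for every von Neumann algebra $\n$, by the Haagerup--Kraus passage from $\mc{K}(H)$-stabilization to arbitrary $\n$ \cite{haagerup-Kraus1994approximation}. Applying this to $T\in\crs{\G}{\n}$ and invoking \lemref{lem3.3} with the finite sets $\mc{F}_{\iota}$ it provides gives \eqref{eq13fejer}. I expect the main technical obstacle to be exactly this last interchange together with the relocation of the net into $\polh\cdot\wh{\vphi}$: passing from weak$^*$ convergence of multipliers to the stabilized pointwise convergence required for an \emph{arbitrary} action is what makes the boundedness bookkeeping and the Haagerup--Kraus extension indispensable.
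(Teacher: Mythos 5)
Your overall architecture is the same as the paper's: Lemma \ref{lem3.3} identifies the Fej\'{e}r sum with $(\tf\ten\id)(T)$, the converse follows by specializing to the trivial action on $\mc{B}(H)$, and the forward direction amounts to relocating an AP net into $\polh\cdot\wh{\vphi}$ and upgrading to stable point-weak* convergence on $\crs{\G}{\n}$. The $(2)\Rightarrow(1)$ half is fine. But your $(1)\Rightarrow(2)$ argument rests on a false premise: you assert that the net $(\wh{g_\iota})$ converging to $1$ weak* in $\McbQHl$ is ``automatically bounded in the multiplier norm, since weak*-convergent nets in a dual Banach space are bounded.'' That statement is true for \emph{sequences} (via uniform boundedness) but false for nets, and the AP is precisely the approximation property that does \emph{not} furnish a bounded multiplier net --- if it did, it would collapse into a weak-amenability-type condition. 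Since your constant $C$ need not exist, both places where you use it fail: the identification of the stable point-weak* topology with the $\sigma(\McbQHl,\QcbQHl)$-topology (which is only valid on bounded sets), and the Banach--Alaoglu extraction of a bounded net from $\overline{B}^{w^*}$.

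The paper avoids boundedness entirely. It starts from a net $(\wh{f_\iota})$ already witnessing the stable point-weak* convergence $\wh{\tl}\op(\wh{f_\iota})|_{\LIQHOP}\to\id$, and perturbs each $\wh{f_\iota}$ within distance $1/m$ in the $\LOQH$-\emph{norm} by an element of $\polh\cdot\wh{\vphi}$, indexing the new net by $I\times\N$ with the product order; since $\|\cdot\|_{\McbQHl}\le\|\cdot\|_{\LOQH}$, the perturbation changes $(\tf\ten\id)(T)$ by at most $\tfrac{1}{m}\|T\|$ in norm, so the perturbed net inherits the required convergence with no uniform bound needed. The passage to an arbitrary crossed product is then done via the identity
\begin{equation*}
(\tlf\ten\id_{\n})(T)=(\om\ten\id)\bigl(\widetilde{W}_{12}\bigl((\Phi(\wh{f_{\iota}})\ten \id)\widetilde{W}^*_{12}T_{23}\widetilde{W}_{12}\bigr)\widetilde{W}^*_{12}\bigr),
\end{equation*}
which applies the stabilized maps to $\widetilde{W}^*_{12}T_{23}\widetilde{W}_{12}\in\LIQHOP\oten(\BLTQ\oten\n)$ directly, again without any appeal to boundedness. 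You should replace your compactness/boundedness step with this norm-perturbation argument (or otherwise justify convergence of the relocated net without assuming a bound).
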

\begin{proof}
Suppose $\G$ has the AP, then there is a net $(\wh{f_{\iota}})_{\iota \in I}\subset \LOQH $ such that $\wh{\tl}\op(\wh{f_{\iota}})|_{\LIQHOP} \rightarrow \id_{\LIQHOP}$ in the stable point-weak* topology.
Since $\polh\cdot \wh{\varphi}$ is dense in $\LOQH$, for each $\iota\in I$ and given $m\in \N$, there is an element $\wh{f}_{\iota, m}\in \polh\cdot \wh{\varphi}$ such that $\|\wh{f}_{\iota}-\wh{f}_{\iota, m}\|_{\LOQ}\leq\frac{1}{m}$. If the net $I\times \N$ is given the product ordering, then  $(f_{\iota, m}:\, \iota\in I, m\in \N)$ is a net in $\polh\cdot \wh{\varphi}$ which converges to 1 in the weak* topology of $\McbQHl$. Therefore, without loss of generality, we assume that $(\wh{f_{\iota}})\subset \polh\cdot \wh{\varphi}$. By Lemma \ref{lem3.3}, for each $\wh{f_{\iota}}$, there exists a finite set $\mc{F}_{\iota}\subseteq\Irrd$ such that for any action $\alpha$ of $\G$ on a von Neumann algebra $\n$, and any $T\in \crs{\G}{\n}$, we have
    \begin{equation*}
       (\tlf\ten \id)(T)= \sum_{\beta\in \mc{F}_{\iota}}\sum _{i,j,k=1}^{n_\beta}\frac{d_{\beta}}{\lambda^{\beta}_i}\la \wh{f_{\iota}}, \wh{u}_{ji}^{\beta} \ra E(T((\wh{u}_{ki}^{\beta})^*\otimes 1))(\wh{u}_{kj}^{\beta}\otimes 1).
    \end{equation*}
	It remains to show that $w^*-\lim_{\iota} (\tlf\ten \id)(T)=T$. Write $\Phi(\wh{f_{\iota}}):=\wh{\tl}\op(\wh{f_{\iota}})|_{\LIQHOP}$. Then, by the AP,
    $$w^*-\lim_{\iota}(\Phi(\wh{f_{\iota}})\ten \id_{\BLTQ\oten \n})=\id_{\LIQHOP\oten (\BLTQ\oten \n)}.$$
    Let $T \in \BLTQ\oten \n$ and fix a state $\om\in \TCQ$. By the (left version of the) proof of \cite[Proposition 4.3]{JNR-junge2009representation}, we have 
	$$(\tlf\ten\id_{\n})(T)=(\om\ten\id_{\BLTQ\oten \n}) (\widetilde{W}_{12}((\Phi(\wh{f_{\iota}})\ten \id_{\BLTQ\oten \n})\widetilde{W}^*_{12}T_{23}\widetilde{W}_{12})\widetilde{W}^*_{12}).$$
	Since $\widetilde{W}^*_{12}T_{23}\widetilde{W}_{12}\in \LIQHOP\oten(\BLTQ\oten\n)$, by stable point-weak* convergence, for any $\rho\in (\BLTQ\oten\n)_*$,
	\begin{align*}
		\lim_{\iota} \la (\tlf\ten\id)(T), \rho\ra &=\lim_{\iota} \la (\om\ten\id_{\BLTQ \oten \n}) (\widetilde{W}_{12}((\Phi(\wh{f_{\iota}})\ten \id_{\BLTQ\oten \n})\widetilde{W}_{12}^*T_{23}\widetilde{W}_{12})\widetilde{W}_{12}^*), \rho \ra \\
		&=\lim_{\iota} \la \widetilde{W}_{12}((\Phi(\wh{f_{\iota}})\ten \id_{\BLTQ\oten \n})\widetilde{W}_{12}^*T_{23}\widetilde{W}_{12})\widetilde{W}_{12}^* , \om\ten \rho \ra\\
		&=\la \widetilde{W}_{12}((\id_{\LIQH\oten \BLTQ\oten \n})\widetilde{W}_{12}^*T_{23}\widetilde{W}_{12})\widetilde{W}_{12}^* ,\om \ten \rho\ra\\
		&=\la  \widetilde{W}_{12}(\widetilde{W}_{12}^*T_{23}\widetilde{W}_{12})\widetilde{W}_{12}^*,  \om \ten \rho\ra\\
		&=\la T,  \rho \ra.
	\end{align*}
Hence, for any $T\in \BLTQ \oten \n$, $w^*-\lim_{\iota} (\tlf \ten \id_{\n})(T)=T.$

Conversely, suppose that there exist nets $(\wh{f_{\iota}})\subseteq \polh\cdot\wh{\vphi} \subseteq L^1(\hat{\G})$ and $(\mc{F}_{\iota})$, such that for any action $\alpha$ of $\G$ on a von Neumann algebra $\n$, equation \eqref{eq13fejer} holds. Let $\G$ act trivially on $\BH$ for a Hilbert space $H$, then $\crs{\G}{\BH}=\LIQH\oten \BH$. From the proof of Lemma \ref{lem3.3}, precisely \eqref{eq12theta}, for any $T\in \LIQH\oten \BH$ and each $\iota \in I$ it follows
$$\sum_{\beta\in \mc{F}_{\iota}}\sum _{i,j,k=1}^{n_\beta}\frac{d_{\beta}}{\lambda^{\beta}_i}\la \wh{f_{\iota}}, \wh{u}_{ji}^{\beta} \ra E(T((\wh{u}_{ki}^{\beta})^*\otimes 1))(\wh{u}_{kj}^{\beta}\otimes 1)=(\tlf\ten \id)(T).$$
Thus $T=w^*-\lim_{\iota} (\tlf\ten \id)(T)$, implying that $\tlf$ converges to $\id_{\LIQHOP}$ in the stable point-weak*-topology. Therefore, $\G$ has the AP.
\end{proof}

We now establish the analogous result in the $C^*$-setting. 

Let $\al: A \rightarrow M(c_0(\G)\mten A)\subseteq \liq\oten A^{**}$ be a left action of a discrete quantum group $\G$ on a $C^*$--algebra A. Then $\al^*|_{(\liq \oten A^{**})_*}: \loq\pten A^*\rightarrow A^*$, and $\widetilde{\al}:=(\al^*|_{(\liq \oten A^{**})_*})^*:A^{**} \rightarrow \liq \oten A^{**}$ defines a left action of $\G$ on the von Neumann algebra $A^{**}$. $\G \leftindex_\al {\ltimes}_r A$ is weak* dense in $\G \leftindex_{\widetilde{\al}} {\ltimes} A^{**}$, since $\widetilde{\al}|_{A}=\al$ and  $\widetilde{\al}$ is normal. 


Let $T=\al(a)(\wh{u}_{ij}^{\beta}\otimes 1) \in\G \leftindex_\al {\ltimes}_r A$ for some $a \in A$, $\beta \in \Irrd$ and $i,j \in \{1,\cdots ,n_{\be}\}$. Then for any $\gamma \in \Irrd$
\begin{align}\label{e:align}
    E_{\wh{\wtal}}(T(\uis{kl}{\gamma}\otimes 1))&=(\wh{\vphi}\ten \id \ten \id)\wh{\wtal}(T(\uis{kl}{\gamma}\ten 1))\\
    &=(\wh{\vphi}\ten \id \ten \id)[(\ww^*\otimes1)(1\otimes \al(a)(\uih{ij}{\be}\otimes 1)(\uis{kl}{\gamma}\otimes1))(\ww\otimes 1)]\\
    &=\al(a) (\wh{\vphi}\ten \id \ten \id)[(\ww^*\otimes1)(1\otimes (\uih{ij}{\be}\otimes 1)(\uis{kl}{\gamma}\otimes1))(\ww\otimes 1)]\\
    &=\wh{\vphi}(\uih{ij}{\be}\uis{kl}{\gamma}) \al(a) \in \al(A).
\end{align}
Hence, by linearity and density, $ E_{\wh{\wtal}}(T(\uis{kl}{\gamma}\otimes 1))\in \al(A)$ for any $T\in\G \leftindex_\al {\ltimes}_r A$.

Similar to \eqref{eqbeforeDef2.10}, the dual action $\wh{\wtal}$ gives a right $\LOQHOP$--module structure on the reduced crossed product $\crsred{\G}{A}$ by 
$$T\cdot \wh{f}:=(\wh{f}\ten \id \ten \id)(\wh{\wtal}(T))=(\wh{\tl}\op(\wh{f}) \ten \id)(T), \;\; \wh{f} \in \LOQHOP,\;\;T \in \crsred{\G}{A}.$$


\begin{lem}\label{lem3.4}
    Let $\G$ be a discrete quantum group and $\al$ be an action of $\G$ on a  $C^*$--algebra $A$. For any $\gamma \in \Irrd$ and  $\phi \in (\crsred{\G}{A})^*$, $(\wh{\tl}\op(\chi_{\gamma})\otimes\id)^*(\phi)\in (\G \leftindex_{\widetilde{\al}} {\ltimes} A^{**})_*$, where $\chi_{\gamma}:=d_{\gamma} \wh{\varphi}_q^{\gamma}$.
\end{lem}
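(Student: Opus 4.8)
The plan is to produce an explicit \emph{normal} functional on the von Neumann crossed product $M:=\G \leftindex_{\wtal} {\ltimes} A^{**}$ whose restriction to the weak*-dense $C^*$-subalgebra $\crsred{\G}{A}$ equals $(\wh{\tl}\op(\chi_\gamma)\otimes\id)^*(\phi)$. Since $\crsred{\G}{A}$ is weak*-dense in $M$, the assertion $(\wh{\tl}\op(\chi_\gamma)\otimes\id)^*(\phi)\in M_*$ means exactly that this (a priori merely bounded) functional extends to an element of $M_*$, so exhibiting such an extension suffices. Throughout I write $E:=E_{\wh{\wtal}}$, which in the discrete case is a normal faithful conditional expectation of $M$ onto the fixed-point algebra $\al(A^{**})$ by \thmref{thm6} and the following remark; the map $\wh{\tl}\op(\chi_\gamma)\otimes\id$ is the dual module action $T\mapsto T\cdot\chi_\gamma$, which preserves $\crsred{\G}{A}$, so $(\wh{\tl}\op(\chi_\gamma)\otimes\id)^*(\phi)=\phi\circ(\wh{\tl}\op(\chi_\gamma)\otimes\id)$ is well-defined on $\crsred{\G}{A}$.

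First I would apply \lemref{lem3.3} to the normal action $\wtal$ of $\G$ on $A^{**}$. Since $\chi_\gamma=d_\gamma\wh{\vphi}_q^\gamma\in\polh\cdot\wh{\vphi}$, there is a finite set $\mc{F}\subseteq\Irrd$ with
$$(\wh{\tl}\op(\chi_\gamma)\otimes\id)(T)=\sum_{\beta\in\mc{F}}\sum_{i,j,k=1}^{n_\beta}\frac{d_\beta}{\lambda_i^\beta}\la\chi_\gamma,\uih{ji}{\beta}\ra\,E(T(\uis{ki}{\beta}\otimes 1))(\uih{kj}{\beta}\otimes 1)$$
for all $T\in M$. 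Two features of this identity drive the argument. On one hand, for every $\beta,i,k$ the map $T\mapsto E(T(\uis{ki}{\beta}\otimes 1))$ is normal from $M$ into $\al(A^{**})$, being right multiplication by the fixed element $\uis{ki}{\beta}\otimes 1$ followed by the normal conditional expectation $E$. On the other hand, by \eqref{e:align}, for $T\in\crsred{\G}{A}$ each such coefficient lies in $\al(A)$.

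Next I would isolate the coefficient functionals. For each $\beta\in\mc{F}$ and each $k,j$, define $\phi_{kj}^\beta$ on $\al(A)\cong A$ by $\la\phi_{kj}^\beta,S\ra:=\la\phi,S(\uih{kj}{\beta}\otimes 1)\ra$; this is bounded because $S(\uih{kj}{\beta}\otimes 1)\in\crsred{\G}{A}$ for $S\in\al(A)$. The crucial—yet automatic—point is that under the canonical duality $A^*=(A^{**})_*$, every $\phi_{kj}^\beta\in A^*$ is a normal functional $\widetilde{\phi}_{kj}^\beta$ on $\al(A^{**})\cong A^{**}$. I then set
$$\la\psi,T\ra:=\sum_{\beta\in\mc{F}}\sum_{i,j,k=1}^{n_\beta}\frac{d_\beta}{\lambda_i^\beta}\la\chi_\gamma,\uih{ji}{\beta}\ra\,\la\widetilde{\phi}_{kj}^\beta,\,E(T(\uis{ki}{\beta}\otimes 1))\ra,\qquad T\in M.$$
Each summand is a normal functional on $\al(A^{**})$ precomposed with the normal map $T\mapsto E(T(\uis{ki}{\beta}\otimes 1))$, hence normal on $M$, and a finite scalar combination of normal functionals is normal, so $\psi\in M_*$.

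Finally I would verify the restriction identity. For $T\in\crsred{\G}{A}$ we have $E(T(\uis{ki}{\beta}\otimes 1))\in\al(A)$, where $\widetilde{\phi}_{kj}^\beta$ agrees with $\phi_{kj}^\beta$; substituting the definition of $\phi_{kj}^\beta$ and reassembling via the displayed formula of \lemref{lem3.3} gives $\la\psi,T\ra=\la\phi,(\wh{\tl}\op(\chi_\gamma)\otimes\id)(T)\ra=\la(\wh{\tl}\op(\chi_\gamma)\otimes\id)^*(\phi),T\ra$. Thus $(\wh{\tl}\op(\chi_\gamma)\otimes\id)^*(\phi)=\psi|_{\crsred{\G}{A}}$ is the restriction of $\psi\in M_*=(\G \leftindex_{\wtal} {\ltimes} A^{**})_*$, as claimed. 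The main obstacle here is conceptual rather than computational: one must recognize that the Fej\'er localization of \lemref{lem3.3} collapses $\wh{\tl}\op(\chi_\gamma)\otimes\id$ into a \emph{finite} sum whose coefficients are controlled by the normal conditional expectation $E$, and that the duality $A^*=(A^{**})_*$ renders the coefficient functionals normal for free; once these observations are in place, the normality of $\psi$ and the restriction identity are routine.
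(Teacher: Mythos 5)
Your argument is correct under the reading you announce at the outset, and it runs on the same two engines as the paper's proof: normality of $E$ composed with right multiplication by a fixed $\uis{ki}{\beta}\otimes 1$, and the fact that every element of $A^*=(A^{**})_*$ is automatically normal on $A^{**}$. The route differs in two respects. First, you recycle \lemref{lem3.3} (applied to $\wh{f}=\chi_\gamma\in\polh\cdot\wh{\vphi}$) to obtain the finite-sum expression, whereas the paper does not invoke \lemref{lem3.3} here at all: it computes directly that $\wh{\tl}\op(\chi_{\gamma})\otimes\id$ acts as the projection onto the $\gamma$-isotypic component, see \eqref{eq14}, and then proves the reconstruction formula \eqref{eq15}, which expresses $(\wh{\tl}\op(\chi_{\gamma})\otimes\id)(T)$, for \emph{every} $T$ in the von Neumann crossed product $\G \leftindex_{\widetilde{\al}} {\ltimes} A^{**}$, as a finite sum of terms $E(\cdots)(\uis{kl}{\gamma}\otimes 1)$ lying in $\crsred{\G}{A}$. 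Second, and as a consequence, the paper's version of the conclusion is slightly stronger than yours: formula \eqref{eq15} is what allows the paper to regard $(\wh{\tl}\op(\chi_{\gamma})\otimes\id)^*\phi$ literally as the functional $T\mapsto\phi\bigl((\wh{\tl}\op(\chi_{\gamma})\otimes\id)(T)\bigr)$ defined on the whole von Neumann crossed product and to prove it normal there, while your construction produces a normal $\psi$ that is only verified to agree with $\phi\circ(\wh{\tl}\op(\chi_{\gamma})\otimes\id)$ on the weak*-dense $C^*$-subalgebra $\crsred{\G}{A}$ (for general $T$ the coefficients $E(T(\uis{ki}{\beta}\otimes 1))$ live in $\widetilde{\al}(A^{**})$ rather than $\al(A)$, so your substitution step is not available there). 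For the way the lemma is consumed in Proposition \ref{Prop3.7} your weaker conclusion actually suffices, since the duality relation is only ever invoked on elements of $\crsred{\G}{A}$ and the limit step only needs normality of the extension; but to recover the paper's literal statement you would also need the analogue of \eqref{eq15}, i.e.\ that $\wh{\tl}\op(\chi_{\gamma})\otimes\id$ maps $\G \leftindex_{\widetilde{\al}} {\ltimes} A^{**}$ into $\crsred{\G}{A}$, before identifying your $\psi$ with the pullback of $\phi$ everywhere.
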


\begin{proof}
First consider $\al(a)(\uis{ij}{\be}\otimes 1)$ for some $a\in A$ and $\be \in \Irrd$.
Since $\wh{\tl}\op(\chi_\gamma)$ is an $\liq$--bimodule map and $\al(a)\in\liq\oten A^{**}$ lies in $\liq$,
        $$(\wh{\tl}\op(\chi_{\gamma})\otimes \id)(\al(a)(\uis{ij}{\beta}\otimes 1))=\al(a)(\wh{\tl}\op(\chi_{\gamma})(\uis{ij}{\beta})\otimes 1).$$ But
        \begin{align*}
            \wh{\tl}\op(\chi_{\gamma})(\uis{ij}{\beta})&=(\chi_{\gamma}\otimes \id)(\wh{\Gam}\op(\uis{ij}{\be}))\\
            &=(\chi_{\gamma}\otimes \id)(\sum_{k=1}^{n_{\beta}}\uis{kj}{\be}\otimes \uis{ik}{\be})\\
            &=d_\gamma(\wh{\varphi}\otimes \id)(\sum_{k=1}^{n_{\beta}}\sum_{l=1}^{n_{\gamma}}\lambda^\gamma_l\uis{kj}{\be}\uih{ll}{\gamma}\otimes \uis{ik}{\be}))\\
            &=d_{\gamma}(\sum_{k=1}^{\beta}\sum_{l=1}^{\gamma}\lambda^\gamma_l\delta_{\be \gamma}\delta_{kl}\delta_{jl}\frac{1}{\lambda^\gamma_ld_{\gamma}})\uis{ik}{\be})).
        \end{align*}
        Therefore,
        \begin{equation}\label{eq14}
            (\wh{\tl}\op(\chi_{\gamma})\otimes \id)(\al(a)(\uis{ij}{\beta}\otimes 1))=\begin{cases} 
      \al(a)(\uis{ij}{\be}\otimes 1) & \text{if}\,\; \be=\gamma, \\
      0 &  \text{if}\,\; \be\neq\gamma. \\ 
   \end{cases}
        \end{equation}
   Next, we observe that
   \begin{align*}
       \sum_{k,l=1}^{n_{\gamma}}\lambda^\gamma_kd_{\gamma}E_{\wh{\wtal}}&\left( (\wh{\tl}\op(\chi_{\gamma})\otimes \id)(\al(a)(\uis{ij}{\gamma}\otimes 1))(\uih{kl}{\gamma}\otimes 1)\right)(\uis{kl}{\gamma}\otimes 1)\\
       &= \sum_{k,l=1}^{n_{\gamma}}\lambda^\gamma_kd_{\gamma}E_{\wh{\wtal}}\left( (\al(a)(\uis{ij}{\gamma}\otimes 1))(\uih{kl}{\gamma}\otimes 1)\right)(\uis{kl}{\gamma}\otimes 1)\\
       &=\sum_{k,l=1}^{n_{\gamma}}\lambda^\gamma_kd_{\gamma}\left[(\wh{\varphi}\otimes\id \otimes\id)\wh{\wtal}\left( (\al(a)(\uis{ij}{\gamma}\otimes 1))(\uih{kl}{\gamma}\otimes 1)\right)\right](\uis{kl}{\gamma}\otimes 1)\\
       &=\sum_{k,l=1}^{n_{\gamma}}\lambda^\gamma_kd_{\gamma}\al(a)\left((\wh{\varphi}\otimes\id \otimes\id)(\wh{\Gamma}\op(\uis{ij}{\gamma}\uih{kl}{\gamma})\otimes 1)\right)(\uis{kl}{\gamma}\otimes 1)\\
       &=\sum_{k,l=1}^{n_{\gamma}}\lambda^\gamma_kd_{\gamma}\al(a)\wh{\varphi}(\uis{ij}{\gamma}\uih{kl}{\gamma})(\uis{kl}{\gamma}\otimes 1)\\
       &=\sum_{k,l=1}^{n_{\gamma}}\lambda^\gamma_kd_{\gamma}\al(a)\delta_{ik}\delta_{jl}\frac{1}{\lambda_{k}d_{\gamma}}(\uis{kl}{\gamma}\otimes 1)\\
       &=\al(a)(\uis{ij}{\gamma}\otimes 1).
   \end{align*}
   From equation \eqref{eq14}, linearity and normality
   \begin{equation}\label{eq15}
       (\wh{\tl}\op(\chi_{\gamma})\otimes \id)(T)=\sum_{k,l=1}^{n_{\gamma}}\lambda^\gamma_kd_{\gamma}E_{\wh{\wtal}}\left( (\wh{\tl}\op(\chi_{\gamma})\otimes \id)(T)(\uih{kl}{\gamma}\otimes 1)\right)(\uis{kl}{\gamma}\otimes 1)\in\crsred{\G}{A}
   \end{equation}
   for any $T\in \G \leftindex_{\widetilde{\al}} {\ltimes} A^{**}$. Thus, for $\phi \in (\crsred{\G}{A})^*$ and $T\in \G \leftindex_{\widetilde{\al}} {\ltimes} A^{**}$ we have
   \begin{align}\label{eq16}
        \phi\left( \wh{\tl}\op(\chi_{\gamma})\otimes \id)(T)\right)&=\phi\biggl( \sum_{k,l=1}^{n_{\gamma}}\lambda^\gamma_kd_{\gamma} E_{\wh{\wtal}}\left( (\wh{\tl}\op(\chi_{\gamma})\otimes \id)(T)(\uih{kl}{\gamma}\otimes 1)\right)(\uis{kl}{\gamma}\otimes 1)\biggr)\nonumber\\
        &=\sum_{k,l=1}^{n_{\be}}\lambda^\gamma_kd_{\gamma} \phi\lBr \underbrace{E_{\wh{\wtal}}\left( (\wh{\tl}\op(\chi_{\be})\otimes \id)(T)(\uih{kl}{\gamma}\otimes 1)\right)}_{\in \al(A)}(\uis{kl}{\gamma}\otimes 1)\biggr) \rBr
   \end{align}
For $k,l\in \{1,\, \cdots,\, n_{\gamma}\}$, define $\phi_{kl}\in A^*=(A^{**})_*$ by $\phi_{kl}(a):=\phi(\al(a)(\uis{kl}{\gamma}\otimes 1))$. This means we can write \eqref{eq16} in terms of normal functionals $\phi_{kl}$:
      $$\phi\left( \wh{\tl}\op(\chi_{\gamma})\otimes \id)(T)\right)=\sum_{k,l=1}^{n_{\gamma}}\lambda^\gamma_kd_{\gamma} \phi_{kl}(a_{kl}^T).$$
      where $a_{kl}^T:=\al^{-1}\lBr E_{\wh{\wtal}}\left( (\wh{\tl}\op(\chi_{\gamma})\otimes \id)(T)(\uih{kl}{\gamma}\otimes 1)\right)\rBr \in A$. The assignment $\G \leftindex_{\widetilde{\al}} {\ltimes} A^{**}\ni T \mapsto a_{kl}^T\in A^{**}$ is normal, which implies that $(\wh{\tl}\op(\chi_{\gamma})\otimes\id)^*(\phi)$ extends to a normal functional on $\G \leftindex_{\widetilde{\al}} {\ltimes} A^{**}$.
   
\end{proof}

For every $T \in \G \leftindex_\al {\ltimes}_r A$ and $\phi \in (\G \leftindex_\al {\ltimes}_r A)^*$, define the linear functional $$\om_{T,\phi}: \McbQHl\rightarrow \C$$ by
$$\om_{T,\phi}(\wh{v}):=\la (\tll{v}\otimes \id)(T), \, \phi \ra,\;\;\;\;\wh{v}\in \McbQHl.$$
\begin{prop}\label{Prop3.7}
    Let $\G$ be a discrete quantum group and $\al$ be an action of $\G$ on a $C^*$--algebra $A$. Then for every $T \in \G \leftindex_\al {\ltimes}_r A$ and $\phi \in (\G \leftindex_\al {\ltimes}_r A)^*$, the functional $\om_{T, \phi}$ lies in $\QcbQHl$.
\end{prop}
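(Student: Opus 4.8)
The plan is to reduce to crossed-product elements $T$ of finite ``Fourier support'' and then to use \lemref{lem3.4} to trade the merely bounded functional $\phi$ for a \emph{normal} functional, after which the weak*-continuity of $\om_{T,\phi}$ becomes immediate. First I would record the uniform estimate
\[
|\om_{T,\phi}(\wh v)|=|\phi((\tll{v}\otimes\id)(T))|\le\|\phi\|\,\|\tll{v}\|_{cb}\,\|T\|=\|\phi\|\,\|\wh v\|\,\|T\|,
\]
valid since $\wh{\tl}\op$ is completely isometric; thus $T\mapsto\om_{T,\phi}$ is norm-continuous into $(\McbQHl)^*$. As $\QcbQHl$ is norm-closed in $(\McbQHl)^*$ and $\polh$ is norm-dense in $C(\wh\G)$, the $*$-subalgebra $\la\al(A)(\polh\otimes1)\ra$ is norm-dense in $\crsred{\G}{A}$, so it suffices to prove $\om_{T,\phi}\in\QcbQHl$ for $T$ therein. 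By linearity I may fix a finite set $\mc F\subseteq\Irrd$ and assume $T$ is \emph{$\mc F$-supported}, i.e. $(\wh{\tl}\op(\chi_{\mc F})\otimes\id)(T)=T$, where $\chi_{\mc F}:=\sum_{\gamma\in\mc F}\chi_\gamma$ and $\wh{\tl}\op(\chi_{\mc F})$ is the support projection onto the matrix coefficients indexed by $\mc F$ (cf. the computation of \eqref{eq14}).

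The crux is that $(\tll{v}\otimes\id)(T)$ stays $\mc F$-supported for every $\wh v$. Since each $\chi_\gamma$ lies in the center $\mc Z(L^1(\wh\G))$ (Section 2) and left multipliers are $L^1$-module maps, $\chi_{\mc F}$ is central in $\McbQHl$; because $\wh{\tl}\op$ is an anti-homomorphism, $\tll{v}$ then commutes with $\wh{\tl}\op(\chi_{\mc F})$. Hence
\[
(\wh{\tl}\op(\chi_{\mc F})\otimes\id)(\tll{v}\otimes\id)(T)=(\tll{v}\otimes\id)(\wh{\tl}\op(\chi_{\mc F})\otimes\id)(T)=(\tll{v}\otimes\id)(T),
\]
so that
\[
\om_{T,\phi}(\wh v)=\phi\big((\wh{\tl}\op(\chi_{\mc F})\otimes\id)(\tll{v}\otimes\id)(T)\big)=\psi\big((\tll{v}\otimes\id)(T)\big),
\]
where $\psi:=(\wh{\tl}\op(\chi_{\mc F})\otimes\id)^*(\phi)=\sum_{\gamma\in\mc F}(\wh{\tl}\op(\chi_\gamma)\otimes\id)^*(\phi)$. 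By \lemref{lem3.4} each summand, and hence $\psi$, is a \emph{normal} functional on $\G \leftindex_{\wtal} {\ltimes} A^{**}$.

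It then remains to show that $\wh v\mapsto\psi((\tll{v}\otimes\id)(T))$ is weak*-continuous on $\McbQHl$; by the Krein--Smulian theorem it is enough to test along bounded nets. So I would take a bounded net $\wh v_\lambda\to\wh v$ weak* in $\McbQHl$. Pairing against the predual elements $\Omega_{x,\rho}$ of \eqref{eq3} (for $\wh\G$), whose $x$-components are dense in $C(\wh\G)\mten\mc K(H)$, and invoking the uniform bound, I would conclude that $\tll{v_\lambda}\otimes\id\to\tll{v}\otimes\id$ in the stable point-weak* topology, so $(\tll{v_\lambda}\otimes\id)(T)\to(\tll{v}\otimes\id)(T)$ weak* in $\BLTQ\oten A^{**}$. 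Since $\psi$ is normal it passes to this limit, giving $\om_{T,\phi}(\wh v_\lambda)\to\om_{T,\phi}(\wh v)$, and therefore $\om_{T,\phi}\in\QcbQHl$.

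The hard part is this final upgrade: weak* convergence in $\McbQHl$ gives point-weak* convergence of the maps $\tll{v_\lambda}$ only on the dense subspace coming from \eqref{eq3}, and promoting this to stable point-weak* convergence on the whole of $\BLTQ\oten A^{**}$ requires the uniform norm bound together with a routine $\varepsilon/3$ argument. The other essential ingredient, delivered by \lemref{lem3.4}, is that precomposing the non-normal $\phi$ with the finite central projection $\wh{\tl}\op(\chi_{\mc F})$ yields the normal functional $\psi$; this is precisely what licenses interchanging $\phi$ with the weak* limit above.
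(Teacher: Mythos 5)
Your argument is essentially the paper's own proof: reduce by norm density of $\la\al(A)(\polh\otimes 1)\ra$ to finitely supported $T$, use centrality of the quantum characters to commute $\tll{v}$ past the support projection, invoke Lemma~\ref{lem3.4} to convert $\phi$ into a normal functional, and pass to the weak* limit along a bounded net. The one step I would flag is your closing ``$\varepsilon/3$'' upgrade: norm density of the $x$-components of the $\Omega_{x,\rho}$ in $C(\wh{\G})\mten\mc{K}(H)$ does not by itself yield point-weak* convergence of $\tll{v_{\lambda}}\otimes\id$ on general elements of $\BLTQ\oten A^{**}$, since your $T$ does not lie in the norm closure of that space; the correct route is either the $\ww_{12}$-conjugation reduction used in the proof of Theorem~\ref{thm3.5}, or --- simpler, and available precisely because your $T$ is $\mc{F}$-supported --- the observation that $\tll{v_{\lambda}}$ preserves the finite-dimensional $\mc{F}$-blocks of $L^{\infty}(\wh{\G})$ and its matrix entries there are pairings of $\wh{v_{\lambda}}$ against elements of $L^1(\wh{\G})\subseteq\QcbQHl$, so that weak* convergence already forces norm convergence of $(\tll{v_{\lambda}}\otimes\id)(T)$. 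Since the paper likewise asserts the stable point-weak* convergence without further justification, this is a caveat about one supporting claim rather than a divergence from its method.
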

\begin{proof}
    First suppose that $T=\al(a)(\uis{ij}{\be}\otimes 1)$ for some $a\in A$ and $\be \in \Irrd$. If $\wh{v}_i\rightarrow \wh{v}$ in the weak* topology of $M_{cb}(\LOQHOP)$, then $(\wh{\tl}\op(\wh{v_i})\otimes\id)(S)\rightarrow (\wh{\tl}\op(\wh{v})\otimes\id)(S) $ weak* for all $S\in \mc{B}(\ltq)\oten A^{**}$.  Recall that $\chi_{\be}$ is in the center of $\LOQH$, then for any $\wh{u}\in M_{cb}(\LOQHOP)$ and any $\wh{f}\in \LOQHOP$ we have
    \begin{align*}
        m_{\wh{u}}^l(m_{\chi_{\be}}^l(\wh{f}))&=m_{\wh{u}}^l(\chi_{\be}\star \wh{f}) =m_{\wh{u}}^l(\wh{f} \star \chi_{\be})\\
        &=m_{\wh{u}}^l(\wh{f})\star \chi_{\be}=\chi_{\be}\star m_{\wh{u}}^l(\wh{f})=m^l_{\chi_{\be}}(m_{\wh{u}}^l(\wh{f})).
    \end{align*}
    Therefore, for each $\wh{v}_i$, $m_{\wh{v}_i}^l$ and $m_{\chi_{\beta}}^l$ commute, and consequently,  $\tll{v_i}$ and $\wh{\tl}\op(\chi_{\be})$ commute.
    
    By Lemma \ref{lem3.4}, $(\wh{\tl}\op(\chi_{\be})\otimes\id)(T)=T$ and $(\wh{\tl}\op(\chi_{\be})\otimes\id)^*(\phi)\in (\G \leftindex_{\widetilde{\al}} {\ltimes} A^{**})_*$, so then
    \begin{align*}
        \om_{T,\phi}(\wh{v_i})&=\la (\tll{v_i}\otimes \id)(T), \, \phi \ra\\
        &=\la(\tll{v_i}\otimes \id)( (\wh{\tl}\op(\chi_{\be})\otimes \id)T), \, \phi \ra\\
        &=\la(\tll{v_i}\otimes \id)(T), \, (\wh{\tl}\op(\chi_{\be})\otimes \id)^*\phi \ra\\
        &\rightarrow \la(\tll{v}\otimes \id)(T), \, (\wh{\tl}\op(\chi_{\be})\otimes \id)^*\phi \ra\\
        &=\la (\tll{v}\otimes \id)(T), \, \phi \ra\\
        &= \om_{T,\phi}(\wh{v}).
    \end{align*}
    Hence, $\om_{T,\phi}\in \QcbQHl$. By norm density of $\la \al(a)(\uis{ij}{\be}\otimes 1)\,| \; a\in A,\, \be \in\Irrd \ra $ in $\crsred{\G}{A}$, we see that $\om_{T,\phi}\in \QcbQHl$ for any $T\in \crsred{\G}{A}$  as $\omega_{T,\vphi} $ is easily seen to be norm continuous in $T$.
\end{proof}
\begin{cor}\label{cor3.2.7}
    Let $\G$ be a discrete quantum group and A be a $C^*$--algebra. Then for any $T\in C(\wh{\G})\mten A$ and any $\phi \in (C(\wh{\G})\mten A)^*$ we have $\omega_{T,\phi}\in \QcbQHl$.
\end{cor}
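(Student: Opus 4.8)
The plan is to recognize the corollary as the special case of \propref{Prop3.7} corresponding to the \emph{trivial} action of $\G$ on $A$. Accordingly, I would let $\al : A \rightarrow M(c_0(\G)\mten A)$ be the trivial action $\al(a) = 1\otimes a$, and the first task is to check that this is a genuine action of the discrete quantum group $\G$ on $A$. It is plainly a $*$--homomorphism; non-degeneracy follows from the existence of an approximate unit in $A$, since $(1\otimes a)(c\otimes a') = c\otimes aa'$ and $\overline{AA} = A$; coassociativity $(\Gam\ten\id)\al = (\id\ten\al)\al$ holds because, using $\Gam(1) = 1\otimes 1$, both sides send $a$ to $1\otimes 1\otimes a$; and the Podle\'{s} condition holds because $\la(c_0(\G)\otimes 1)\al(A)\ra = \la c_0(\G)\otimes A\ra$ is norm dense in $c_0(\G)\mten A$.

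Next I would identify the reduced crossed product of this trivial action. Since $\al(a)(\wh{b}\otimes 1) = \wh{b}\otimes a$ for $\wh{b}\in C(\wh{\G})$ and $a\in A$, the defining closed span collapses to
$$\crsred{\G}{A} = \overline{\la \al(A)(C(\wh{\G})\otimes 1)\ra}^{\lVert\cdot\rVert} = \overline{\la \wh{b}\otimes a \,:\, \wh{b}\in C(\wh{\G}),\, a\in A\ra}^{\lVert\cdot\rVert} = C(\wh{\G})\mten A.$$
Consequently the hypotheses $T\in C(\wh{\G})\mten A$ and $\phi\in (C(\wh{\G})\mten A)^*$ of the corollary coincide exactly with the hypotheses of \propref{Prop3.7} applied to $\al$.

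Finally, I would note that the functional $\om_{T,\phi}$ is defined purely through $\om_{T,\phi}(\wh{v}) = \la(\tll{v}\otimes\id)(T),\phi\ra$, and hence depends only on the ambient inclusion $T\in \bltq\oten A^{**}$, not on any further structure of $\al$; thus it is literally the functional appearing in \propref{Prop3.7} for the trivial action. Invoking that proposition then gives $\om_{T,\phi}\in\QcbQHl$, completing the argument. There is no serious obstacle here: the corollary is a direct specialization, and the only point demanding a moment's verification is that the trivial action satisfies the axioms of an action --- in particular non-degeneracy and the Podle\'{s} density condition --- which I addressed above.
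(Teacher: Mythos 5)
Your proposal is correct and follows exactly the paper's own argument: apply \propref{Prop3.7} to the trivial action $a\mapsto 1\otimes a$, whose reduced crossed product is $C(\wh{\G})\mten A$. The extra verification that the trivial action satisfies the action axioms is a fine (if routine) addition, but the route is identical.
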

\begin{proof}
    Let $\al:A\rightarrow M(c_0(\G)\mten A):a\mapsto1\ten a$ be the trivial action on A then $\crsred{\G}{A}=C(\wh{\G})\mten A$. By Proposition \ref{Prop3.7} we have $\omega_{T,\phi}\in \QcbQHl$.
\end{proof}
\begin{remark}
     Kraus and Ruan in \cite[ Proposition 5.2]{krausRuan1999AP-Kac}, proved that when $\G$ is a discrete Kac algebra and $H$ is a separable Hilbert space, we have $\omega_{T,\phi}\in \QcbQHl$ for any $T\in C(\wh{\G})\mten \mc{B}(H)$ and any $\phi \in (C(\wh{\G})\mten \mc{B}(H))^*$. Therefore, our result in Corollary \ref{cor3.2.7} and Proposition \ref{Prop3.7} extend this result to discrete quantum groups.
 \end{remark}  
\begin{thm}\label{thm3.9}
    Let $\G$ be a discrete quantum group. Then the following are equivalent
    \begin{enumerate}
        \item $\G$ has the AP.
        \item There exist nets $(\wh{f_{\iota}})_{\iota \in I}\subseteq \polh\cdot \wh{\vphi} \subseteq \LOQH$ and $(\mc{F}_{\iota})_{\iota\in I}$,consisting of finite subsets of $\Irrd$, such that for any action $\alpha$ of $\G$ on a $C^*$--algebra $A$, and any $T\in \crsred{\G}{A}$,
	\begin{equation}\label{eq17fejer}
	    T=\lim_{\iota}\sum_{\beta\in \mc{F}_{\iota}}\sum _{i,j,k=1}^{n_\beta}\frac{d_{\beta}}{\lambda^{\beta}_i}\la \wh{f_{\iota}}, \wh{u}_{ji}^{\beta} \ra E(T((\wh{u}_{ki}^{\beta})^*\otimes 1))(\wh{u}_{kj}^{\beta}\otimes 1),
	\end{equation}
 where $E$ is the conditional expectation induced by the dual of $\al$.
    \end{enumerate} 
\end{thm}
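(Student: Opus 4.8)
The plan is to reduce the Fej\'er representation \eqref{eq17fejer} to the single assertion that $(\tlf\ten\id)(T)\to T$, and then to run an argument parallel to \thmref{thm3.5}, the crucial difference being that the $C^*$-setting forces \emph{norm} rather than weak$^*$ convergence. First I would observe that $\crsred{\G}{A}$ sits inside the von Neumann crossed product $\G \leftindex_{\wtal} {\ltimes} A^{**}$, so that \lemref{lem3.3}, and in particular the computation \eqref{eq12theta}, applies verbatim to any $T\in\crsred{\G}{A}$: the finite sum on the right-hand side of \eqref{eq17fejer} equals $(\tlf\ten\id)(T)$, with $E=E_{\wh{\wtal}}$ the conditional expectation, whose values lie in $\al(A)$ by \eqref{e:align}. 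Hence statement (2) is equivalent to the existence of a net $(\wh{f_\iota})\subseteq\polh\cdot\phih$, with finite supports $\mc{F}_\iota$, for which $(\tlf\ten\id)(T)\to T$ in norm for every action and every $T$.

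For (1)$\Rightarrow$(2) I would begin from the AP net and, exactly as in the proof of \thmref{thm3.5}, pass to a net $(\wh{f_\iota})\subseteq\polh\cdot\phih$ with finite supports $\mc{F}_\iota$ converging to $1$ weak$^*$ in $\McbQHl$. The heart of the matter is norm convergence on the dense $^*$-subalgebra $\la\al(a)(\uis{ij}{\be}\ten 1)\ra$. Since $\wh{\tl}\op(\wh{f_\iota})$ is an $\liq$-bimodule map and $\al(a)$ has its first leg in $\liq$, one has $(\tlf\ten\id)(\al(a)(\uis{ij}{\be}\ten 1))=\al(a)\big(\wh{\tl}\op(\wh{f_\iota})(\uis{ij}{\be})\ten 1\big)$; moreover $\wh{\tl}\op(\wh{f_\iota})$ maps each matrix block $\mc{B}(H_\be)$ into itself, by a direct computation of the same kind as in \eqref{eq14}. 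This reduces the problem to the \emph{finite-dimensional} space $\mc{B}(H_\be)$, where the weak$^*$ convergence $\wh{\tl}\op(\wh{f_\iota})(\uis{ij}{\be})\to\uis{ij}{\be}$ — which follows from $\wh{f_\iota}\to 1$ weak$^*$ together with the representation \eqref{eq3} of $\QcbQHl$ — is automatically convergence in norm. Thus $(\tlf\ten\id)(P)\to P$ in norm for every $P$ in the subalgebra.

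For the converse (2)$\Rightarrow$(1) I would specialise to the trivial action of $\G$ on $A=\mc{K}(H)$, where $\crsred{\G}{\mc{K}(H)}=C(\wh{\G})\mten\mc{K}(H)$ and \eqref{eq17fejer} again reads $(\tlf\ten\id)(T)\to T$ in norm, hence weakly. By \corref{cor3.2.7} the functionals $\om_{T,\phi}$ belong to $\QcbQHl$, and by the representation \eqref{eq3}, as $H$ ranges over all Hilbert spaces these exhaust $\QcbQHl$. Since $\om_{T,\phi}(\wh{f_\iota})=\la(\tlf\ten\id)(T),\phi\ra\to\la T,\phi\ra=\om_{T,\phi}(1)$, we obtain $\wh{f_\iota}\to 1$ in $\sigma(\McbQHl,\QcbQHl)$, which is exactly the AP.

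I expect the main obstacle to be the passage from weak$^*$ to norm convergence in the forward direction: in contrast to \thmref{thm3.5}, the defining weak$^*$ convergence of the AP net no longer suffices on its own. The mechanism I would exploit is that the dual $\wh{\G}$ is compact, so each $T\in\crsred{\G}{A}$ is a norm-limit of finitely supported elements living on finitely many finite-dimensional blocks $\mc{B}(H_\be)$, on which the weak$^*$ and norm topologies coincide. Upgrading the resulting convergence on this dense subalgebra to all of $\crsred{\G}{A}$ requires a uniform completely bounded control of the Fej\'er maps $(\tlf\ten\id)$ together with norm density; securing this uniform control — precisely where the completely bounded multiplier estimates behind \propref{Prop3.7} are needed — is the delicate step.
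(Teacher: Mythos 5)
Your reduction of \eqref{eq17fejer} to the statement $(\tlf\ten\id)(T)\to T$, and your converse direction, both match the paper. The forward direction, however, has a genuine gap at exactly the step you flag as delicate. Your plan is: prove norm convergence on the dense subalgebra $\la\al(a)(\uis{ij}{\be}\ten 1)\ra$ (which is fine --- the block computation and the finite-dimensionality argument do give $(\tlf\ten\id)(P)\to P$ in norm there), and then extend to all of $\crsred{\G}{A}$ by a density argument using ``uniform completely bounded control of the Fej\'er maps.'' That uniform control is not available: the AP only provides a net $(\wh{f_\iota})$ converging to $1$ in the weak$^*$ topology $\sigma(\McbQHl,\QcbQHl)$, and a weak$^*$-convergent \emph{net} in a dual Banach space need not be bounded. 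Requiring $\sup_\iota\|\wh{f_\iota}\|_{\McbQHl}<\infty$ would amount to the (w$^*$)completely bounded approximation property, which is strictly stronger than the AP. \propref{Prop3.7} does not supply such a bound either --- it asserts that the functionals $\om_{T,\phi}$ lie in the predual $\QcbQHl$, which is a statement about \emph{which functionals you may test the weak$^*$ convergence against}, not about the norms of the multipliers. So the $3\varepsilon$-argument you would need to pass from the dense subalgebra to all of $\crsred{\G}{A}$ does not go through.

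The paper's proof circumvents this entirely: by \propref{Prop3.7}, for \emph{every} $T\in\crsred{\G}{A}$ and every $\phi\in(\crsred{\G}{A})^*$ one has $\om_{T,\phi}\in\QcbQHl$, so
\begin{equation*}
\la(\tlf\ten\id)(T),\phi\ra=\om_{T,\phi}(\wh{f_\iota})\longrightarrow\om_{T,\phi}(1)=\la T,\phi\ra,
\end{equation*}
giving weak convergence for every $T$ directly, with no density step and no uniform bound. Norm convergence is then obtained by replacing the $\wh{f_\iota}$ with suitable convex combinations (weak and norm closures of convex sets coincide); note that the convex combinations stay in $\polh\cdot\phih$ and still have finite ``supports'' $\mc{F}_\iota$. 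If you want to salvage your outline, you should replace the density-plus-uniform-boundedness step with this argument; your work on the dense subalgebra then becomes unnecessary.
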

\begin{proof}
    The proof remains largely the same as the proof of Theorem \ref{thm3.5}. We just need to show that the convergence is in the norm topology. Suppose $\G$ has the AP, then there is a net $(\wh{f_{\iota}})\subseteq \polh\cdot\wh{\vphi}$ such that $\wh{\tl}\op(\wh{f_{\iota}})\rightarrow \id_{\LIQHOP}$ in stable point-weak* topology. Repeating the proof of Lemma \ref{lem3.3},  for any $T\in \crsred{\G}{A}$ we have
$$\sum_{\beta\in \mc{F}_{\iota}}\sum _{ijk}\frac{d_{\beta}}{\lambda^{\beta}_i}\la \wh{f_{\iota}}, \wh{u}_{ji}^{\beta} \ra E(T((\wh{u}_{ki}^{\beta})^*\otimes 1))(\wh{u}_{kj}^{\beta}\otimes 1)\nonumber
		=(\tlf\ten \id)(T).$$
By Lemma \ref{Prop3.7}, $\om_{T,\phi }\in \QcbQHl$ for any $\phi \in (\crsred{\G}{A})^*$, then 
$$\la (\tlf\ten \id)(T),\, \phi  \ra=\om_{T,\phi }(\wh{f_{\iota}})\rightarrow \om_{T,\phi }(1)=\la T,\, \phi \ra.$$
That is, $(\tlf\ten \id)(T)\rightarrow T$ weakly in $\crsred{\G}{A}$. Taking convex
combinations of the $\wh{f_{\iota}}$, we may assume that the latter weak convergence is relative to the
norm topology. Hence,
$$T=\lim_{\iota}\sum_{\beta\in \mc{F}_{\iota}}\sum _{ijk}\frac{d_{\beta}}{\lambda^{\beta}_i}\la \wh{f_{\iota}}, \wh{u}_{ji}^{\beta} \ra E(T((\wh{u}_{ki}^{\beta})^*\otimes 1))(\wh{u}_{kj}^{\beta}\otimes 1)$$
in the norm topology of $\crsred{\G}{A}$.

Conversely, suppose that there exist nets $(\wh{f_{\iota}})\subseteq \polh\cdot\wh{\vphi} $ and $(\mc{F}_{\iota})$, such that for any action $\alpha$ of $\G$ on a $C^*$--algebra $A$, Equation \eqref{eq17fejer} holds. Let $\G$ act trivially on $\mc{K}(H)$ for a Hilbert space $H$, then $\crsred{\G}{\mc{K}(H)}=C(\wh{\G})\mten \mc{K}(H)$. From the above discussion , for any $T\in C(\wh{\G})\mten \mc{K}(H)$, $(\tlf\ten \id)(T)\rightarrow T$ in the norm topology of $C(\wh{\G})\mten \mc{K}(H)$. From \eqref{eq3}, every functional  $\om \in \QcbQHl$ is in the form of $\om=\Om_{T,\rho}$, for some $T\in C(\wh{\G})\mten \mc{K}(H)$ and $\rho \in \LOQH \pten \Th$. Then
$$\Om_{T,\rho}(\wh{f_{\iota}})=\la \tlf\ten \id)(T)\, \rho \ra\rightarrow \la T,\, \rho \ra=\Om_{T,\rho}(1).$$
This means that $\wh{f_{\iota}} \rightarrow 1$ in the weak* topology of $\McbQHl$. So $\G$ has the AP.
\end{proof}
\begin{prop}\label{prop3.10}
       Let $\G$ be a discrete quantum group and $\al$ be an action of $\G$ on a von Neumann algebra $\n$. Then for every $T \in \crs{\G}{\n}$ and $\phi \in (\crs{\G}{\n})_*$, the functional $\om_{T, \phi}$ lies in $\QcbQHl$.
\end{prop}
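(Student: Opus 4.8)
The plan is to show that $\om_{T,\phi}$ is weak*-continuous on $\McbQHl$, which is exactly the condition for membership in the predual $\QcbQHl=(\McbQHl)_*$. The argument runs parallel to that of Proposition~\ref{Prop3.7}, but should be genuinely simpler: in the $C^*$-setting the functional $\phi$ was merely bounded, forcing the use of the central element $\chi_\be$ together with Lemma~\ref{lem3.4} to fabricate a normal functional $(\wh{\tl}\op(\chi_\be)\ten\id)^*\phi$; here $\phi\in(\crs{\G}{\n})_*$ is \emph{already} normal, so that device can be dropped entirely.

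Concretely, I would fix a net $(\wh{v}_i)$ converging to $\wh{v}$ in the weak* topology of $\McbQHl$ and seek to show $\om_{T,\phi}(\wh{v}_i)\to\om_{T,\phi}(\wh{v})$. As recorded in the proof of Proposition~\ref{Prop3.7}, weak* convergence in $\McbQHl$ amounts to stable point-weak* convergence of the associated normal completely bounded maps; invoking the Haagerup--Kraus stabilization used already in the definition of the AP (to pass from $\mc{B}(H)$ to an arbitrary von Neumann algebra), this yields
\[
(\tll{v_i}\ten\id_\n)(S)\longrightarrow(\tll{v}\ten\id_\n)(S)\quad\text{weak* in }\BLTQ\oten\n,\qquad S\in\BLTQ\oten\n .
\]
I would then apply this with $S=T\in\crs{\G}{\n}\subseteq\BLTQ\oten\n$. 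Since $(\tll{v_i}\ten\id)(T)$ and $(\tll{v}\ten\id)(T)$ lie in $\crs{\G}{\n}$ — because for $\wh{v}\in\McbQHl$ the map $\tll{v}\ten\id$ preserves the crossed product, this being the module action extending \eqref{eqbeforeDef2.10} that makes the definition of $\om_{T,\phi}$ meaningful — and since $\phi$ is normal, extending it to a normal functional on $\BLTQ\oten\n$ gives
\[
\om_{T,\phi}(\wh{v}_i)=\la(\tll{v_i}\ten\id)(T),\phi\ra\longrightarrow\la(\tll{v}\ten\id)(T),\phi\ra=\om_{T,\phi}(\wh{v}).
\]
Hence $\om_{T,\phi}$ is weak*-continuous, so $\om_{T,\phi}\in\QcbQHl$.

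I expect the only delicate point to be the passage in the middle step: justifying that weak* convergence in the multiplier algebra $\McbQHl$ propagates to stable point-weak* convergence of the maps $\tll{v_i}$ on \emph{all} of $\BLTQ\oten\n$, rather than merely on a weak*-dense subspace. This is the heart of the Junge--Neufang--Ruan/Daws--Krajczok--Voigt duality invoked in Proposition~\ref{Prop3.7}, and it relies on the $\LIQ$-bimodularity of the maps $\tll{v_i}$ together with the density relation \eqref{eq2.2.2} identifying $\BLTQ$ as the weak* closure of $\la\, x\wh{x}\,\ra$. Granting this structural input, the normality of $\phi$ reduces the rest of the proof to the single weak*-limit computation above, and no appeal to Lemma~\ref{lem3.4} or to the centrality of $\chi_\be$ is required.
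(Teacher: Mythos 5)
Your proposal is correct and takes essentially the same route as the paper: the paper's entire proof of this proposition is the observation that normality of $\phi$ makes $(\tll{v}\ten \id)_*(\phi)$ automatically normal, after which it defers to the argument of Proposition~\ref{Prop3.7} with the $\chi_{\beta}$/Lemma~\ref{lem3.4} machinery stripped out --- exactly the simplification you identify. The one delicate point you flag (that weak* convergence in $\McbQHl$ yields stable point-weak* convergence of $\tll{v_i}\ten\id$ on all of $\BLTQ\oten\n$) is asserted at the same level of detail in the paper's proof of Proposition~\ref{Prop3.7}, so your treatment is on par with the paper's.
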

\begin{proof}
    Since $\phi$ is normal, for any $\wh{v}\in \McbQHl$ we have $(\tll{v}\ten \id )_*(\phi)\in (\crs{\G}{\n})_*$. The remainder of the proof proceeds in the same manner as the proof of Proposition \ref{Prop3.7}
\end{proof}
\begin{cor}
     Let $\G$ be a discrete quantum group and $\n$ be a von Neumann algebra. Then for any $T\in \LIQH\oten \n$ and any $\phi \in (\LOQH \pten \n_*)$ we have $\omega_{T,\phi}\in \QcbQHl$.
\end{cor}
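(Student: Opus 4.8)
The plan is to realize the pair $(\LIQH\oten\n,\ \LOQH\pten\n_*)$ as the von Neumann crossed product of a genuine action together with its predual, and then invoke \propref{prop3.10} directly. This is the von Neumann-algebraic counterpart of \corref{cor3.2.7}, and I expect it to follow in exactly the same way. First I would take the trivial action $\al:\n\rightarrow\liq\oten\n$, $x\mapsto 1\ten x$. This is a normal, injective, unital $*$-homomorphism, and the coassociativity condition $(\Gam\ten\id)\al=(\id\ten\al)\al$ holds trivially, since both sides send $x$ to $1\ten 1\ten x$. Thus $\al$ is a legitimate left action of $\G$ on $\n$.

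Next I would identify the crossed product explicitly. By definition $\crs{\G}{\n}=(\al(\n)\cup\LIQH\ten\C)''\subseteq\BLTQ\oten\n$, and for the trivial action $\al(\n)=\C 1\ten\n$. Hence the generating sets are $\LIQH\ten\C$ and $\C\ten\n$, and the von Neumann algebra they generate inside $\BLTQ\oten\n$ is precisely $\LIQH\oten\n$. Consequently
$$(\crs{\G}{\n})_*=(\LIQH\oten\n)_*=\LOQH\pten\n_*,$$
the last identity being the standard description of the predual of a spatial (von Neumann) tensor product as the operator space projective tensor product of the preduals.

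With these two identifications in hand, every $T\in\LIQH\oten\n$ is an element of $\crs{\G}{\n}$ and every $\phi\in\LOQH\pten\n_*$ is a normal functional on it, so \propref{prop3.10} applies verbatim and yields $\om_{T,\phi}\in\QcbQHl$. There is essentially no genuine obstacle: once the trivial action is recognized as an action and the crossed product is identified with $\LIQH\oten\n$, the result is immediate. The only points requiring a moment of care are the (routine) verification that $x\mapsto 1\ten x$ satisfies the axioms of a left action and the identification of the predual of the spatial tensor product with $\LOQH\pten\n_*$, both of which are standard.
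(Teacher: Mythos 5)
Your proof is correct and follows exactly the route the paper intends: the paper leaves this corollary without an explicit proof, but its $C^*$-analogue (the corollary after Proposition \ref{Prop3.7}) is proved in precisely this way, via the trivial action, the identification $\crs{\G}{\n}=\LIQH\oten\n$, and the corresponding statement for crossed products. Nothing to add.
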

\begin{remark}
    Anderson-Sackaney pointed out in \cite[Proposition 2.14]{anderson2022ideals}
that the work of Kraus and Ruan \cite[Proposition 5.2]{krausRuan1999AP-Kac} extends directly from discrete Kac algebras to discrete quantum groups. i.e. 
given a discrete quantum group $\G$ and a Hilbert space $H$, for all $T\in \LIQH\oten\mc{B}(H)$ and for all $\phi \in \LOQH\pten \Th$ we have $\omega_{T, \phi}\in \QcbQHl$. The above corollary provides another proof for this fact.
\end{remark}

\begin{remark} In view of the results of \cite{crannNeufang2022Fejer}, it would be natural to study extensions of the results in this section to co-amenable type I locally compact quantum groups. 
\end{remark}

\section{Applications}

\subsection{Structure of $L^{\infty}(\h{\G})$--bimodules in $\bltq$} In a series of papers \cite{anoussis2014ideals-A(G),anoussis2016idealsFourier,Todorovanoussis2018bimodules}, Annousis, Katavolos and Todorov studied the structure of weak*-closed $\LI$-- (resp. $VN(G)$--)bimodules in $\BLT$ which are invariant under the canonical action of $M(G)$ (resp. $M_{cb}A(G)$). They characterized such ``jointly invariant subspaces'' via closed left ideals in $\LO$ (resp. $A(G)$) in a number of cases. Partial generalizations of some of their results were given in \cite{crannNeufang2022Fejer} for locally compact groups with the AP, and it is natural to pursue similar questions in the quantum group setting.

Given a locally compact quantum group $\G$ and a closed left ideal $J\unlhd\LOQ$, inspired by \cite{Todorovanoussis2018bimodules}, we define
$$\mathrm{Bim}(J^{\perp})=\overline{\la\h{x}a\h{y}\mid a\in J^{\perp}, \ \h{x},\h{y}\in L^{\infty}(\h{\G})\ra}^{w^*}$$
and

$$\mathrm{Ran}(J)=\overline{\la\Theta^r(f)_*(\rho)\mid f\in J, \ \rho\in\TCQ\ra}^{\norm{\cdot}_{\TCQ}},$$
where $\Theta^r(f)_*$ is the pre-adjoint of the normal completely bounded map $\Theta^r(f)$. Obviously $\mathrm{Bim}(J^{\perp})$ is a $L^{\infty}(\h{\G})$--bimodule in $\BLTQ$, and since $\Theta^r$ is a $L^{\infty}(\h{\G})$--bimodule map it is easy to show that $\mathrm{Ran}(J)^{\perp}$
is also a $L^{\infty}(\h{\G})$--bimodule in $\BLTQ$.

 For a locally compact group $G$, it was shown in \cite{Todorovanoussis2018bimodules} that for any $J\unlhd L^1(G)$, $\mathrm{Bim}(J^{\perp})\subseteq \mathrm{Ran}(J)^{\perp}$, and in \cite{crannNeufang2022Fejer} that the reverse inclusion holds when $G$ has the AP. We will prove similar results in the quantum group setting.
 The following preparatory results are quantum group analogues of results in \cites{crannNeufang2022Fejer, Todorovanoussis2018bimodules}.
 \begin{lem}\label{lem4.1}
 	The space $\mathrm{Ran}(J)^{\perp}$ is the intersection of the kernels of the maps $\{\Theta^r(f) \mid f \in J \}$. We write this as 
 	$$\mathrm{Ran}(J)^{\perp}=\ker \Theta^r(J).$$
 \end{lem}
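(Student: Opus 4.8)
The plan is to run a straightforward duality argument using the defining property of the pre-adjoint. Since $\mathrm{Ran}(J)$ is a norm-closed subspace of the predual $\TCQ$, its annihilator $\mathrm{Ran}(J)^{\perp}$ lives in the dual $(\TCQ)^* = \BLTQ$, and it consists precisely of those $T \in \BLTQ$ with $\la T, \sigma \ra = 0$ for every $\sigma \in \mathrm{Ran}(J)$. First I would use norm-continuity of each functional $\la T, \cdot \ra$ on $\TCQ$ together with the fact that $\mathrm{Ran}(J)$ is by definition the norm-closed linear span of the elements $\Theta^r(f)_*(\rho)$ to reduce the annihilation condition to the generators: $T \in \mathrm{Ran}(J)^{\perp}$ if and only if $\la T, \Theta^r(f)_*(\rho)\ra = 0$ for all $f \in J$ and all $\rho \in \TCQ$.

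Next I would invoke the defining identity of the pre-adjoint. Because $\Theta^r(f)$ is a normal completely bounded map on $\BLTQ$ with pre-adjoint $\Theta^r(f)_*$ on $\TCQ$, we have
$$\la \Theta^r(f)(T), \rho\ra = \la T, \Theta^r(f)_*(\rho)\ra, \qquad T \in \BLTQ,\ \rho \in \TCQ,\ f \in J.$$
Substituting this into the characterization above shows that $T \in \mathrm{Ran}(J)^{\perp}$ if and only if $\la \Theta^r(f)(T), \rho\ra = 0$ for all $f \in J$ and all $\rho \in \TCQ$.

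Finally, since $\rho$ ranges over the entire predual $\TCQ$, which separates the points of $\BLTQ$, the condition $\la \Theta^r(f)(T), \rho\ra = 0$ for all $\rho$ is equivalent to $\Theta^r(f)(T) = 0$. Thus $T \in \mathrm{Ran}(J)^{\perp}$ if and only if $\Theta^r(f)(T) = 0$ for every $f \in J$, i.e. $T \in \bigcap_{f \in J}\ker \Theta^r(f) = \ker \Theta^r(J)$, which is the asserted equality. I do not expect any genuine obstacle here: the argument is purely formal duality, and the only points requiring minor care are that the perp is correctly taken in $\BLTQ$ as the dual of $\TCQ$ (rather than in a bidual), and that norm-density of the generating set of $\mathrm{Ran}(J)$ legitimately reduces the computation to the elements $\Theta^r(f)_*(\rho)$.
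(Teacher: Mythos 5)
Your argument is correct and is essentially the same as the paper's: both reduce the annihilation condition to the generators $\Theta^r(f)_*(\rho)$ of $\mathrm{Ran}(J)$ and then apply the pre-adjoint identity $\la \Theta^r(f)(T),\rho\ra=\la T,\Theta^r(f)_*(\rho)\ra$ together with the fact that $\TCQ$ separates points of $\BLTQ$. Your write-up merely makes explicit the density and separation steps that the paper leaves implicit.
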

\begin{proof}
	Let $x \in \mathrm{Ran}(J)^{\perp}\subseteq \BLTQ$, then for any $f\in J$ and $\rho \in \TCQ$ we have $0=\la\Theta^r(f)_*(\rho), x \ra=\la\rho, \Theta^r(f)(x)   \ra$, which means $x \in \ker\Theta^r(J)$. Similarly, the reverse inclusion holds.
\end{proof}
 \begin{lem}\label{lem4.2}
 	Let $\G$ be a locally compact quantum group. Then for any closed essential left ideal $J\unlhd\LOQ$,  $\mathrm{Ran}(J)^{\perp}\cap \LIQ=J^\perp$.
 \end{lem}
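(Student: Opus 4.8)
The plan is to reduce everything to a single pairing identity connecting $\Theta^r$ with the convolution product on $\LOQ$, and then read off both inclusions. By \lemref{lem4.1} we have $\mathrm{Ran}(J)^\perp=\ker\Theta^r(J)$, so that
$$\mathrm{Ran}(J)^\perp\cap\LIQ=\{x\in\LIQ\mid \Theta^r(f)(x)=0\ \text{for all}\ f\in J\}.$$
Since $\Gam(x)=V(x\ten 1)V^*$, for $x\in\LIQ$ we have $\Theta^r(f)(x)=(\id\ten f)\Gam(x)\in\LIQ$, and the first thing I would record is the identity
$$\la g,\Theta^r(f)(x)\ra=\la g\ten f,\Gam(x)\ra=\la \Gam_*(g\ten f),x\ra=\la g\star f,x\ra,\qquad g\in\LOQ.$$
Consequently $\Theta^r(f)(x)=0$ is equivalent to $\la g\star f,x\ra=0$ for every $g\in\LOQ$, which is the only computation the whole argument rests on.

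For the inclusion $J^\perp\subseteq\mathrm{Ran}(J)^\perp\cap\LIQ$, I would take $x\in J^\perp$ and use that $J$ is a \emph{left} ideal: then $g\star f\in J$ for all $g\in\LOQ$ and $f\in J$, so $\la g\star f,x\ra=0$ for every such $g$, and the identity above forces $\Theta^r(f)(x)=0$ for each $f\in J$. This direction uses only the left ideal property and requires no further hypothesis on $J$.

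For the reverse inclusion $\mathrm{Ran}(J)^\perp\cap\LIQ\subseteq J^\perp$, suppose $x\in\LIQ$ satisfies $\Theta^r(f)(x)=0$ for all $f\in J$, i.e.\ $\la g\star f,x\ra=0$ for all $g\in\LOQ$ and $f\in J$. This says precisely that $x$ annihilates the linear span of $\LOQ\star J$, hence its closed linear span $\overline{\la\LOQ\star J\ra}$. Here the hypothesis that $J$ is \emph{essential}, namely $\overline{\la\LOQ\star J\ra}=J$, is exactly what I need: every $h\in J$ is then a norm limit of finite sums $\sum_i g_i\star f_i$, so $\la h,x\ra=0$ and $x\in J^\perp$. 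The main obstacle — indeed the only nontrivial point — is this reverse inclusion, where essentiality is indispensable: without it one obtains only that $x$ annihilates $\overline{\la\LOQ\star J\ra}$, which may be a proper closed sub-ideal of $J$. Note also that essentiality is what lets us bypass a bounded approximate identity for $\LOQ$, which need not exist unless $\G$ is co-amenable.
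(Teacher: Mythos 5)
Your proof is correct and follows essentially the same route as the paper: both arguments reduce the statement to the pairing identity $\la g,\Theta^r(f)(x)\ra=\la g\star f,x\ra$ (equivalently $\Theta^r(f)_*(g)=g\star f$), then use the left ideal property for one inclusion and essentiality of $J$ for the other. Your write-up merely makes explicit the derivation of this identity from $\Gam(x)=V(x\ten1)V^*$ and the role of essentiality, which the paper leaves implicit.
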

\begin{proof} Recall that essentiality means $\overline{\la\LOQ\star J\ra}=J$.  Let $a\in \mathrm{Ran}(J)^{\perp}\cap\LIQ$. Then for any $g\in\LOQ$ and $f\in J$,  $$\la a, g\star f \ra=\la a, \Theta^r(f)_*( g) \ra=\la\Theta^r(f)(a), g \ra=0.$$ 
By linearity and essentiality, it follows that $a \in J^{\perp}$.
	 Conversely, assume that $a \in J^{\perp}$, and $f \in J$. Since $J$ is a closed left ideal, for any $g\in \LOQ$, $g\star f \in J$, so that $\la\Theta^r(f)(a), g \ra=\la a, \Theta^r(f)_*(g) \ra=\la a, g\star f  \ra=0$. Therefore $a \in \ker\Theta^r(J)=\mathrm{Ran}(J)^{\perp}$.
\end{proof}
\begin{prop}\label{prop4.3}
	Let $\G$ be a locally compact quantum group. Then for any closed essential left ideal $J\unlhd \LOQ$, $$\mathrm{Bim}(J^{\perp})\subseteq \mathrm{Ran}(J)^{\perp}.$$
    Consequently, we have
    $$\mathrm{Bim}(J^{\perp})\cap \LIQ= \mathrm{Ran}(J)^{\perp}\cap\LIQ=J^{\perp}.$$
\end{prop}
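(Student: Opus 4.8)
The plan is to prove the inclusion $\mathrm{Bim}(J^{\perp})\subseteq\mathrm{Ran}(J)^{\perp}$ on generators and then upgrade it by weak*-closedness, after which the two displayed equalities drop out by sandwiching against Lemma~\ref{lem4.2}. Note first that $\mathrm{Ran}(J)\subseteq\TCQ$ is norm-closed, so its annihilator $\mathrm{Ran}(J)^{\perp}\subseteq\BLTQ$ is weak*-closed; since $\mathrm{Bim}(J^{\perp})$ is by definition the weak*-closed linear span of the elements $\h{x}a\h{y}$ with $a\in J^{\perp}$ and $\h{x},\h{y}\in\LIQH$, it suffices to show each such generator lies in $\mathrm{Ran}(J)^{\perp}$. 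By Lemma~\ref{lem4.1}, $\mathrm{Ran}(J)^{\perp}=\ker\Theta^r(J)$, so this reduces to verifying $\Theta^r(f)(\h{x}a\h{y})=0$ for every $f\in J$.

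The main point is the $\LIQH$--bimodule property of $\Theta^r$: recall that $\Theta^r$ maps $\LOQ$ into $\cb{\LIQH}{\LIQ}{\BLTQ}$, so each $\Theta^r(f)$ is a normal $\LIQH$--bimodule map. I would first record that $J^{\perp}\subseteq\ker\Theta^r(J)$, which is the essentiality-free half of Lemma~\ref{lem4.2}: for $a\in J^{\perp}$, $f\in J$ and $g\in\LOQ$, the left-ideal property gives $g\star f\in J$, whence $\la\Theta^r(f)(a),g\ra=\la a,\Theta^r(f)_*(g)\ra=\la a,g\star f\ra=0$ and thus $\Theta^r(f)(a)=0$. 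Combining this with the bimodule property,
\begin{equation*}
\Theta^r(f)(\h{x}a\h{y})=\h{x}\,\Theta^r(f)(a)\,\h{y}=0\qquad(f\in J),
\end{equation*}
so $\h{x}a\h{y}\in\ker\Theta^r(J)=\mathrm{Ran}(J)^{\perp}$. Passing to weak*-closed spans yields $\mathrm{Bim}(J^{\perp})\subseteq\mathrm{Ran}(J)^{\perp}$.

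For the consequence I would intersect with $\LIQ$ and chain inclusions. The inclusion just proved gives $\mathrm{Bim}(J^{\perp})\cap\LIQ\subseteq\mathrm{Ran}(J)^{\perp}\cap\LIQ$, and the right-hand side equals $J^{\perp}$ by Lemma~\ref{lem4.2} (this is the step where essentiality of $J$ is genuinely used). Conversely, since $J^{\perp}\subseteq\LIQ$ and the unit $1$ lies in $\LIQH$, any $a\in J^{\perp}$ satisfies $a=1\cdot a\cdot1\in\mathrm{Bim}(J^{\perp})\cap\LIQ$, giving $J^{\perp}\subseteq\mathrm{Bim}(J^{\perp})\cap\LIQ$. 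The resulting chain $J^{\perp}\subseteq\mathrm{Bim}(J^{\perp})\cap\LIQ\subseteq\mathrm{Ran}(J)^{\perp}\cap\LIQ=J^{\perp}$ forces equality throughout. The only substantive ingredient is the bimodule identity $\Theta^r(f)(\h{x}a\h{y})=\h{x}\Theta^r(f)(a)\h{y}$, which collapses the two-sided generators onto the single computation $\Theta^r(f)(a)=0$; the rest is formal duality together with Lemmas~\ref{lem4.1} and~\ref{lem4.2}.
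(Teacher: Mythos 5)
Your proposal is correct and follows essentially the same route as the paper: reduce to the generators $\h{x}a\h{y}$ via Lemma~\ref{lem4.1} and weak*-continuity of $\Theta^r(f)$, kill them using $\Theta^r(f)(a)=0$ for $a\in J^{\perp}$ together with the $\LIQH$--bimodule property of $\Theta^r$, and then obtain the displayed equalities by sandwiching against Lemma~\ref{lem4.2}. Your added observation that the inclusion $J^{\perp}\subseteq\ker\Theta^r(J)$ does not require essentiality (only the converse inclusion in Lemma~\ref{lem4.2} does) is accurate and slightly sharper than the paper's terse citation, but it does not change the argument.
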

\begin{proof}
Since elements of the form $\wh{x}a\wh{y}$ generate $\mathrm{Bim}(J^{\perp})$, and $\Theta^r(f)$ is weak* continuous,  by Lemma \ref{lem4.1} it suffices to show that $\Theta^r(f)(\h{x}a\h{y})=0$, for all $a \in J^{\perp}$, $f\in J$ and $\h{x}, \h{y}\in \LIQH$. This follows readily from Lemma \ref{lem4.2} and the bimodule property of $\Theta^r$.
\end{proof}

\begin{thm}\label{thm3.3.7}
Let $\G$ be a discrete quantum group with the AP. Then for any closed left ideal $J\unlhd \ell^1(\G)$,
	$$\mathrm{Bim}(J^{\perp})=\mathrm{Ran}(J)^{\perp}.$$
\end{thm}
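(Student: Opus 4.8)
The plan is to prove the two inclusions separately, the nontrivial one resting on the Fej\'er representation of \thmref{thm3.5}. First I observe that since $\G$ is discrete, $\LOQ=\ell^1(\G)$ is unital with unit the (bounded, normal) counit $\epsilon$; hence every closed left ideal $J\unlhd\ell^1(\G)$ is automatically essential, as $\epsilon\star f=f$ forces $\overline{\la\LOQ\star J\ra}=J$. Consequently \propref{prop4.3} applies and yields both $\mathrm{Bim}(J^{\perp})\subseteq\mathrm{Ran}(J)^{\perp}$ and the identity $\mathrm{Ran}(J)^{\perp}\cap\LIQ=J^{\perp}$. It therefore remains to establish $\mathrm{Ran}(J)^{\perp}\subseteq\mathrm{Bim}(J^{\perp})$, and by \lemref{lem4.1} this amounts to showing that every $T\in\BLTQ$ with $\Theta^r(f)(T)=0$ for all $f\in J$ lies in $\mathrm{Bim}(J^{\perp})$.

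To bring the Fej\'er machinery to bear I would realize $\BLTQ$ as a crossed product. Taking $\n=\LIQ$ with the translation action $\al=\Gam$ (a left action of $\G$ by coassociativity), the crossed product $\crs{\G}{\LIQ}$ is generated by $\Gam(\LIQ)$ and $\LIQH\ten\C$ and is spatially identified with $\BLTQ$ (the quantum analogue of $G\ltimes\LI\cong\BLT$), carrying $\Gam(\LIQ)$ onto $\LIQ$ and $\LIQH\ten\C$ onto $\LIQH\subseteq\BLTQ$. Under this identification the operator-valued weight $E=E_{\wh{\al}}$ of \thmref{thm6} becomes a normal conditional expectation $E\colon\BLTQ\to\LIQ$ (a conditional expectation precisely because $\G$ is discrete), and \thmref{thm3.5} gives, for $T$ as above,
\begin{equation*}
T=w^*-\lim_{\iota}\sum_{\beta\in\mc{F}_{\iota}}\sum_{i,j,k=1}^{n_{\beta}}\frac{d_{\beta}}{\lambda_i^{\beta}}\la\wh{f_{\iota}},\uih{ji}{\beta}\ra\, E\big(T\,\uis{ki}{\beta}\big)\,\uih{kj}{\beta}.
\end{equation*}
Each summand has the form $c\,\uih{kj}{\beta}$ with coefficient $c=E(T\,\uis{ki}{\beta})\in\LIQ$ and $\uih{kj}{\beta}\in\LIQH$. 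Thus, if every such coefficient lies in $J^{\perp}$, each summand lies in $J^{\perp}\cdot\LIQH\subseteq\mathrm{Bim}(J^{\perp})$ (take $\wh{x}=1$), and since $\mathrm{Bim}(J^{\perp})$ is weak*-closed the displayed limit forces $T\in\mathrm{Bim}(J^{\perp})$.

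The heart of the matter is therefore the claim $E(T\,\uis{ki}{\beta})\in J^{\perp}$, which I would obtain from the commutation identity $E\circ\Theta^r(f)=\Theta^r(f)\circ E$. To prove the latter, note that since $\G$ is discrete, $\wh{\G}$ is compact and its left Haar weight $\wh{\vphi}$ is a normal state, so $\wh{\vphi}\in\LOQHOP\subseteq M_{cb}^l(\LOQHOP)$; by \eqref{eqbeforeDef2.10} with $\wh{f}=\wh{\vphi}$ the conditional expectation is implemented by $\wh{\tl}\op(\wh{\vphi})$, and Remark \ref{r:comm} shows $\wh{\tl}\op(\wh{\vphi})$ commutes with $\Theta^r(f)$. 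Granting this, and using that $\Theta^r(f)$ is an $\LIQH$-bimodule map (\thmref{thm1}), for $f\in J$ I compute
\begin{equation*}
\Theta^r(f)\big(E(T\,\uis{ki}{\beta})\big)=E\big(\Theta^r(f)(T\,\uis{ki}{\beta})\big)=E\big(\Theta^r(f)(T)\,\uis{ki}{\beta}\big)=E(0)=0,
\end{equation*}
whence $E(T\,\uis{ki}{\beta})\in\ker\Theta^r(J)\cap\LIQ=\mathrm{Ran}(J)^{\perp}\cap\LIQ=J^{\perp}$ by \lemref{lem4.1} and \propref{prop4.3}.

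I expect the main obstacle to be twofold and intertwined: on the conceptual side, the clean justification of $E\circ\Theta^r(f)=\Theta^r(f)\circ E$, where one must confirm that $\wh{\vphi}$ genuinely qualifies as a completely bounded left multiplier so that Remark \ref{r:comm} is applicable; and on the technical side, the bookkeeping of the spatial identification $\crs{\G}{\LIQ}\cong\BLTQ$, ensuring that the Fej\'er coefficients, which a priori live in the fixed-point algebra $\Gam(\LIQ)$, are correctly transported into $\LIQ\subseteq\BLTQ$ so that \propref{prop4.3} truly applies. Once these are in place, the remainder is simply assembling the Fej\'er sum and invoking weak*-closedness of $\mathrm{Bim}(J^{\perp})$.
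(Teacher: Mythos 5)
Your proposal is correct and follows the same architecture as the paper's proof: the easy inclusion via essentiality and \propref{prop4.3}, the identification $\BLTQ\cong\crs{\G}{\liq}$ via the (right) comultiplication, the commutation of $E$ with $\Theta^r(f)$ to get the Fej\'er coefficients into $J^{\perp}$, and the weak*-closedness of $\mathrm{Bim}(J^{\perp})$ to conclude from \thmref{thm3.5}. The one genuine difference lies in how the commutation $E\circ\Theta^r(f)=\Theta^r(f)\circ E$ is established: the paper verifies it by a direct computation of $E((\Theta^r(f)\ten\id)(\Gam^r(\wh{x}x)))$ on generators $\Gam^r(\wh{x}x)$, $x\in\liq$, $\wh{x}\in\LIQH$, and then extends by linearity and normality; you instead observe that, since $\wh{\G}$ is compact, $\wh{\vphi}$ is a normal state, so $E=\wh{\tl}\op(\wh{\vphi})$ by \eqref{eq6} and \eqref{eqbeforeDef2.10}, and the commutation is then a special case of Remark~\ref{r:comm}. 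Your route is shorter and more conceptual, and is in fact consistent with how the paper itself treats $E$ later (it writes $E(T\wh{g})=\wh{\Theta^{\ell}}\op(\wh{\vphi})(T\wh{g})$ in the proof of Theorem~\ref{thm3.3.9}); the paper's direct computation has the minor advantage of not requiring one to check that $\wh{\vphi}$ sits inside $M_{cb}^l(\LOQHOP)$, though that check is immediate from $\LOQHOP\subseteq M_{cb}^l(\LOQHOP)$. Your closing worry about transporting the coefficients from $\Gam(\liq)$ to $\liq\subseteq\BLTQ$ is handled exactly as you suspect, since $\Gam^r$ restricts to $\Gam$ on $\liq$ and to the identity on $\LIQH$.
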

\begin{proof}
	By discreteness, every closed left ideal is essential, so thanks to Proposition \ref{prop4.3}, we only need to prove $\mathrm{Ran}(J)^{\perp}\subseteq \mathrm{Bim}(J^{\perp})$.
    
	First, we identify $\mc{B}(\ltq)\cong \crs{\G}{\liq}$ via the  extended right co-multiplication
	$$\Gam^r:\mc{B}(\ltq)\rightarrow\mc{B}(\ltq)\oten \liq\;\: T\mapsto V(T\ten 1)V^*.$$
	It follows that the dual action $\wh{\al }:\mc{B}(\ltq)(\cong \crs{\G}{\liq})\rightarrow \LIQH\oten \mc{B}(\ltq)$ is the extended left co-multiplication
	$$\wh{\Gam^l}\op:\mc{B}(\ltq) \rightarrow \LIQH \oten \mc{B}(\ltq)\;\; T\mapsto \widetilde{W}^*(1\ten T)\widetilde{W}.$$
	Let $f\in \loq$, $T=\Gam^r(\wh{x}x)$ for some $\wh{x}\in \LIQH$, $x\in \liq$. Then
	\begin{align*}
E((\Theta^r(f)\ten \id)(T))&=E((\Theta^r(f)\ten \id)(\Gam^r(x\wh{x})))\\
&=E([\Gam^r(\Theta^r(f)(x))](\wh{x}\ten 1))\\
&=(\wh{\vphi}\ten\id)[\hat{\al}\left(\Gam^r(\Theta^r(f)(x))(\hat{x}\ten 1)\right)]\\
&=(\wh{\vphi}\ten\id)(1\ten\Gam^r(\Theta^r(f)(x))(\widehat{\Gam}^{\op}(\hat{x})\ten 1) \\
&=\Gam^r(\Theta^r(f)(x))\wh{\vphi}(\hat{x})\\
&=(\Theta^r(f)\ten \id)\left(\Gam^r(x)\wh{\vphi}(\hat{x})\right)\\
&=(\Theta^r(f)\ten \id)(\wh{\vphi}\ten\id)((1\ten\Gam^r(x))(\widehat{\Gam}^{\op}(\hat{x})\ten 1))\\
&=(\Theta^r(f)\ten \id)(\wh{\vphi}\ten\id)\wh{\al}(\Gam^r(x))\wh{\al}(\widehat{\Gam}^{\op}(\hat{x}))\\
&=(\Theta^r(f)\ten \id)(E(T))
 	\end{align*} 
Then by linearity and normality of $E$ and $\Gamma^{r}$, $E$ and $\Theta^r(f)\ten \id$ commute for all $f$ and $T\in \mc{B}(\ltq)$. Passing through the identification $\mc{B}(\ltq)\cong \crs{\G}{\liq}$ via the  extended right co-multiplication $\Gam^r$, we get $E(\Theta^r(f)(T))=\Theta^r(f)(E(T))$ for all $f\in \loq$ and $T\in \mc{B}(\ltq)$. In particular, $E(\mathrm{Ran}(J)^\perp)=E(\ker \Theta^r(J))\subseteq\mathrm{Ran}(J)^{\perp}\cap\ell^\infty(\G)=J^{\perp}$.
	
	Now, let $T\in \mathrm{Ran}(J)^{\perp}$. Since $\mathrm{Ran}(J)^{\perp}$ is a $\LIQH$--bimodule, for any $\beta\in \Irrd$, $T(\uis{ij}{\beta}\ten 1)\in \mathrm{Ran}(J)^{\perp}$, hence $E(T(\uis{ij}{\beta}\ten 1))\in J^{\perp}$.
	The Fej\'{e}r representation for $T$ from Theorem \ref{thm3.5} then implies that $T \in \mathrm{Bim}(J^{\perp})$.
\end{proof}
The following theorem characterizes certain jointly invariant subspaces of $\bltq$ for a discrete quantum group with the AP. Compare with \cite[Theorem 4.3]{anoussis2014ideals-A(G)}, where they showed that any weak*-closed $\LI$--bimodule of $\BLT$ that is invariant under $\mathrm{Ad}\rho_r:T\mapsto \rho_r T\rho_r^*$ takes the form of $\mathrm{Bim}(J^{\perp})$ for a closed ideal $J\unlhd A(G)$.
\begin{thm}\label{thm3.3.9}
Let $\G$ be a discrete quantum group with the AP. Then for any weak*-closed subset $U\subseteq \mc{B}(\ltq)$, the following are equivalent:
	\begin{enumerate}
		\item There exists a closed left ideal $ J\unlhd \ell^1(\G)$ such that
		$U=\mathrm{Bim}(J^{\perp})=\mathrm{Ran}(J)^{\perp}.$\vspace{5pt}
		\item $U$ is a $L^{\infty}(\h{\G})$--bimodule, invariant under $\wh{\tl}\op(M_{cb}^l(\LOQHOP))\cong \cb{\liq}{\LIQHOP}{\BLTQ}.$
	\end{enumerate}
\end{thm}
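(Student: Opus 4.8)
The plan is to prove the two implications separately, the content lying in $(2)\Rightarrow(1)$.

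\emph{$(1)\Rightarrow(2)$.} Assume $U=\mathrm{Bim}(J^{\perp})=\mathrm{Ran}(J)^{\perp}$ for a closed left ideal $J\unlhd\loq$. That $U$ is an $\LIQH$--bimodule is built into the definition of $\mathrm{Bim}(J^{\perp})$, so only the invariance needs checking. By \lemref{lem4.1}, $U=\ker\Theta^r(J)$, hence it suffices to show $\tll{v}(U)\subseteq\ker\Theta^r(J)$ for every $\wh v\in M_{cb}^l(\LOQHOP)$. I would read this off Remark~\ref{r:comm}: viewing $f\in J$ inside $M_{cb}^r(\loq)$, the maps $\trf$ and $\tll{v}$ commute, so for $T\in U$,
\[
\trf\bigl(\tll{v}(T)\bigr)=\tll{v}\bigl(\trf(T)\bigr)=0,
\]
and therefore $\tll{v}(T)\in\ker\Theta^r(J)=U$.

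\emph{$(2)\Rightarrow(1)$, setup.} Throughout I identify $\bltq\cong\crs{\G}{\liq}$ as in the proof of \thmref{thm3.3.7}, so that the Fej\'er representation \eqref{eq13fejer} is at hand and $E=E_{\wh\al}$ is the conditional expectation onto $\liq$. Put $D:=U\cap\liq$, a weak*-closed subspace of $\liq$. The first point is that $E(U)\subseteq U$: under this identification the dual action is $T\mapsto\ww^*(1\ten T)\ww$, so comparing with \eqref{eq6} shows $E=\tll{\vphi}$, where $\wh\vphi\in\LOQHOP\subseteq M_{cb}^l(\LOQHOP)$ is the (finite) Haar state of $\wh\G$; thus $E(U)\subseteq U$ is the special case $\wh v=\wh\vphi$ of hypothesis (2). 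Since $U$ is an $\LIQH$--bimodule, for $T\in U$ and any matrix coefficient we get $T(\uis{ki}{\beta}\ten1)\in U$, and hence $E(T(\uis{ki}{\beta}\ten1))\in E(U)\cap\liq\subseteq D$.

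\emph{$U=\mathrm{Bim}(D)$.} One inclusion is immediate since $D\subseteq U$ and $U$ is a weak*-closed $\LIQH$--bimodule. For the other, take $T\in U$ and apply \eqref{eq13fejer}: every summand has the form (scalar)$\,\cdot E(T(\uis{ki}{\beta}\ten1))(\uih{kj}{\beta}\ten1)$ with $E(T(\uis{ki}{\beta}\ten1))\in D$ and $\uih{kj}{\beta}\in\LIQH$, so it lies in $\mathrm{Bim}(D)$; as $\mathrm{Bim}(D)$ is weak*-closed, the weak* limit $T$ lies in $\mathrm{Bim}(D)$ as well.

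\emph{Realizing $D$ as $J^{\perp}$.} I set $J:=D_{\perp}\subseteq\loq$, so that $J^{\perp}=D$ and $J$ is norm-closed; the remaining task is to verify that $J$ is a left ideal. Using $\la d,g\star f\ra=\la(g\ten\id)\Gam(d),f\ra=\la\tl(g)(d),f\ra$ for $d\in\liq$, $f,g\in\loq$, this amounts to showing that $D$ is invariant under the maps $\tl(g)$, $g\in\loq$. This is the main obstacle. Here I would use the standard fact that the left fundamental unitary lies in $\liq\oten\LIQH$. For $d\in D$ one has $\tl(g)(d)=(g\ten\id)(W^*(1\ten d)W)$; approximating $W$ in its unit ball by finite sums $\sum_n x_n\ten\h y_n$ ($x_n\in\liq$, $\h y_n\in\LIQH$) in the $\sigma$-strong* topology (Kaplansky density) and using joint $\sigma$-strong* continuity of multiplication on bounded sets, $\tl(g)(d)$ is exhibited as the weak* limit of finite combinations $\sum_{m,n}\la g,x_m^{*}x_n\ra\,\h y_m^{*}d\,\h y_n$. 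Each term lies in $U$ by the bimodule property, so $\tl(g)(d)\in U$; being also in $\liq$, it lies in $D$. Hence $J$ is a closed left ideal with $J^{\perp}=D$, and combining $U=\mathrm{Bim}(D)=\mathrm{Bim}(J^{\perp})$ with \thmref{thm3.3.7} yields $U=\mathrm{Bim}(J^{\perp})=\mathrm{Ran}(J)^{\perp}$.
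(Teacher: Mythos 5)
Your proof is correct and follows essentially the same route as the paper: the forward direction via the commutation of $\Theta^r(f)$ with $\wh{\tl}\op(\wh{v})$, and the converse via the identification $\bltq\cong\crs{\G}{\liq}$, the observation that $E=\wh{\tl}\op(\wh{\vphi})$ so that invariance under $E$ follows from hypothesis (2), the Fej\'{e}r representation of Theorem \ref{thm3.5}, and the $W\in\liq\oten\LIQH$ approximation argument to obtain the left-ideal property, concluding with Theorem \ref{thm3.3.7}. The only (harmless) difference is that you take $D=U\cap\liq$ and verify directly that it is a right $\loq$--submodule, whereas the paper builds the predual ideal from below as $I=\overline{\{E(T\wh{g})\}\star\loq}^{w^*}$; by Proposition \ref{prop4.3} these coincide a posteriori.
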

\begin{proof}
	$(1)\Rightarrow(2)$: First, assume that $U=\mathrm{Bim}(J^{\perp})=\mathrm{Ran}(J)^{\perp}$. We only need to show that $U$ is invariant under $\wh{\tl}\op(M_{cb}^l(\LOQHOP))$. Let $T\in U$, $\wh{m}\in M_{cb}^l(\LOQHOP)$ and $\om=\Theta^r(f)_*(\rho)$ for some $f \in J$ and $\rho \in T(\ltq)$, then by Remark \ref{r:comm},
 \begin{align*}
     \la \tll{m} (T), \;\om\ra  
     &= \la \Theta^r(f)(\tll{m} (T)),\; \rho \ra\\
     &=\la \tll{m}(\Theta^r(f)(T)), \; \rho\ra\\
     &=\la T, \; \Theta^r(f)_*(\tll{m}_*(\rho)) \ra\\
     &=0
 \end{align*}
 since $\Theta^r(f)_*(\tll{m}_*(\rho)) \in \mathrm{Ran}(J)$. Then, by density of $\la\Theta^r(f)_*(\rho)\mid f\in J, \ \rho\in\TCQ\ra$ in $\mathrm{Ran}(J)$,  we must have $\tll{m} (T) \in \mathrm{Ran}(J)^{\perp}=U$.
 
 $(2)\Rightarrow(1)$: Let $\Gam$ be the canonical action of $\G$ on $\liq$, and identify $\mc{B}(\ltq)\cong \crs{\G}{\liq}$.
 Define 
 $$I:=\overline{\{E(T\wh{g})\;|\; T\in U,\; \wh{g}\in \textnormal{Pol}(\wh{\G})\}\star\loq}^{w^*}\subseteq \liq,$$ and set $J:=I_{\perp}$. $J$ is a closed left ideal of $\loq$ since $I$ is weak* closed and $I\star \loq\subseteq I$. It remains to show $U=\mathrm{Bim}(J^{\perp})$.
 
Let $T\in U$. By definition of $J$ (as $I_{\perp}$), Theorem \ref{thm3.5} implies that $T$ is a weak*-limit of elements in $\mathrm{Bim}(J^{\perp})$, implying $U\subseteq \mathrm{Bim}(J^{\perp})$. Towards the reverse inclusion, observe 
$$T\star f=(f\ten \id )W^*(1\ten T)W \in U, \ \ \ T\in U, \ f\in\loq,$$ 
since $U$ is a weak*-closed $\LIQH$--bimodule and $W\in \liq\oten \LIQH$. Moreover, for $\wh{g}\in\mathrm{Pol}(\wh{\G})$, $E(T\wh{g})=\wh{\Theta^{\ell}}\op(\wh{\vphi})(T\wh{g})\in U$, as $U$ is invariant under  $\wh{\Theta^{\ell}}\op(\wh{\vphi})$. It follows that $J^{\perp}=I\subseteq U$, which, in turn, implies $\mathrm{Bim}(J^{\perp})$.
\end{proof}
\begin{remark}
    In the proof of the above theorem, we defined $J^{\perp}$ using the action of $\loq$ on $\{E(T\wh{g})\;|\; T\in U,\; \wh{g}\in \textnormal{Pol}(\wh{\G})\}$ to ensure that $J$ is a left ideal of $\loq$. However, if we assume that $\G$ is also a Kac algebra then using the traciality of $\wh{\varphi}$ it was shown in \cite[p. 17]{moakhar2018amenable} that $E$ is $\Gamma$--equivariant. Therefore, in the Kac algebra setting, we can choose $J^{\perp}$ to be simply $\overline{\{E(T\wh{g})\;|\; T\in U,\; \wh{g}\in \textnormal{Pol}(\wh{\G})\}}^{w^*}$.
\end{remark}
As a corollary of Theorem \ref{thm3.3.9}, we have the following new result in the group setting which is a dual version of \cite[Theorem 4.3]{anoussis2014ideals-A(G)}.
\begin{cor}
    Let $G$ be a discrete group with the approximation property. Then for any weak*-closed subspace $U\subseteq \mc{B}(\lt)$, the following are equivalent:
    \begin{enumerate}
       \item There exists a  closed left ideal $ J\unlhd \lo$ such that
		$U=\mathrm{Bim}(J^{\perp})=\mathrm{Ran}(J)^{\perp}.$
		\item $U$ is a $VN(G)$--bimodule that is invariant under $\wh{\Theta}(M_{cb}A(G))=\cb{\LI}{VN(G)}{\BLT}$.
    \end{enumerate}
\end{cor}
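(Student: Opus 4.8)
The plan is to obtain this as a direct specialization of \thmref{thm3.3.9} to the discrete quantum group $\G = G$ attached to a discrete group $G$, translating each quantum-group object into its classical counterpart via the standard dictionary.

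First I would record the identifications. For a discrete group $G$ one has $\liq = \ell^\infty(G) = \LI$, $\LIQH = VN(G)$, $\ltq = \ell^2(G)$, $\loq = \ell^1(G)$ equipped with convolution, and $L^1(\widehat{\G}) = A(G)$. Under these a closed left ideal $J \unlhd \loq$ is precisely a closed left ideal of $\ell^1(G)$, while $\mathrm{Bim}(J^\perp)$ and $\mathrm{Ran}(J)^\perp$ become the spaces named in the statement. Moreover the approximation property of the quantum group $\G = G$ is, by definition, the approximation property of $G$ in the sense of Haagerup--Kraus \cite{haagerup-Kraus1994approximation}, so the running hypothesis transfers verbatim.

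Next I would reconcile the invariance condition appearing in part (2). The crucial observation is that $L^1(\widehat{\G}) = A(G)$ is a \emph{commutative} Banach algebra; consequently $M_{cb}^l(\LOQHOP) = \Mcb$, the opposite and the left/right distinctions all collapsing to the single algebra of completely bounded multipliers of $A(G)$. Accordingly the representation $\wh{\tl}\op$ of \thmref{thm3.3.9} reduces to the canonical representation $\wh{\Theta}$ of $\Mcb$ on $\mc{B}(\ell^2(G))$. At the level of the map space, $\LIQHOP = VN(G)\op$ consists of the same operators as $VN(G)$, and being a bimodule map over $VN(G)\op$ is the same requirement as being one over $VN(G)$; hence $\cb{\liq}{\LIQHOP}{\BLTQ}$ specializes to $\cb{\LI}{VN(G)}{\BLT}$, which is exactly $\wh{\Theta}(\Mcb)$.

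With these dictionaries in place, the equivalence $(1)\Leftrightarrow(2)$ of \thmref{thm3.3.9} reads word for word as the asserted equivalence, and the corollary follows at once. The single point demanding care is the bookkeeping attached to the opposite quantum group $\widehat{\G}\op$: the commutativity of $A(G)$ makes this opposite structure invisible both at the level of the multiplier algebra $\Mcb$ and at the level of the associated space of bimodule maps, so nothing beyond the identifications above is required.
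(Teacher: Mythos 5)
Your proposal is correct and is essentially the paper's own argument: the corollary is stated there as an immediate specialization of Theorem \ref{thm3.3.9}, using exactly the dictionary you describe ($\liq=\ell^\infty(G)$, $\LIQH=VN(G)$, $\loq=\ell^1(G)$, $L^1(\wh{\G})=A(G)$, AP $=$ Haagerup--Kraus AP), together with the observation that cocommutativity of $VN(G)$ makes $\wh{\G}\op=\wh{\G}$ and collapses $M_{cb}^l(L^1(\wh{\G}\op))$ to $M_{cb}A(G)$. No gaps.
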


\begin{prop}
    Let $\G$ be a discrete quantum group, and let $J_1$ and $J_2$ be closed left ideals of $\loq$. Then the following hold:
    \begin{enumerate}[label=(\roman*)]
        \item $\bim{J_1}\vee \bim{J_2}=\mathrm{Bim}((J_1\cap J_2)^{\perp})$, where $\vee$ denotes weak*-closed linear span.
    \end{enumerate}
    Moreover, if $\G$ has the AP and $\wh{\G}$ has Ditkin's property, then:
    \begin{enumerate}[resume, label=(\roman*)]
        \item  $\mathrm{Bim}(J_1^{\perp}\cap J_2^{\perp})=\mathrm{Bim}(J_1^{\perp})\cap\mathrm{Bim}(J_2^{\perp})$.
        \item $\mathrm{Ran}(J_1)\cap \mathrm{Ran}(J_2)=\mathrm{Ran}(J_1\cap J_2)$.
    \end{enumerate}
\end{prop}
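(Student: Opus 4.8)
The plan is to treat (i) as a formal consequence of annihilator duality, and then to derive (ii) and (iii) from the correspondence $\mathrm{Bim}(J^{\perp})=\mathrm{Ran}(J)^{\perp}=\ker\Theta^r(J)$ supplied by \thmref{thm3.3.7} and \lemref{lem4.1}, together with elementary (pre)annihilator calculus. For (i), I would first record the standard identity $(J_1\cap J_2)^{\perp}=\overline{J_1^{\perp}+J_2^{\perp}}^{\,w^*}$, valid for any two norm-closed subspaces by the bipolar theorem. Because left and right multiplication by a fixed element of $\LIQH$ is weak*-continuous on $\BLTQ$, the weak*-closed bimodule generated by a set $S$ coincides with the one generated by its weak*-closed linear span; applying this to $S=J_1^{\perp}+J_2^{\perp}$ and using bilinearity of $(\wh{x},a,\wh{y})\mapsto \wh{x}a\wh{y}$ gives $\mathrm{Bim}(J_1^{\perp}+J_2^{\perp})=\mathrm{Bim}(J_1^{\perp})\vee\mathrm{Bim}(J_2^{\perp})$. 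Combining the two displays yields $\mathrm{Bim}((J_1\cap J_2)^{\perp})=\mathrm{Bim}(J_1^{\perp})\vee\mathrm{Bim}(J_2^{\perp})$, which is (i); I note this step uses neither the AP nor Ditkin's property.

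For (ii), set $J_3:=\overline{J_1+J_2}$, a closed left ideal satisfying $J_3^{\perp}=J_1^{\perp}\cap J_2^{\perp}$, so that the left-hand side of (ii) is $\mathrm{Bim}(J_3^{\perp})$. The inclusion $\mathrm{Bim}(J_1^{\perp}\cap J_2^{\perp})\subseteq\mathrm{Bim}(J_1^{\perp})\cap\mathrm{Bim}(J_2^{\perp})$ is immediate from $J_1^{\perp}\cap J_2^{\perp}\subseteq J_i^{\perp}$. For the reverse inclusion I would invoke \thmref{thm3.3.7} and \lemref{lem4.1} to rewrite $\mathrm{Bim}(J_i^{\perp})=\ker\Theta^r(J_i)$; then $\mathrm{Bim}(J_1^{\perp})\cap\mathrm{Bim}(J_2^{\perp})=\ker\Theta^r(J_1)\cap\ker\Theta^r(J_2)=\ker\Theta^r(J_1+J_2)$, and since $\Theta^r$ is completely isometric (so that $\Theta^r(f_n)(T)\to\Theta^r(f)(T)$ whenever $f_n\to f$ in norm) this kernel is unchanged on passing to the norm closure, giving $\ker\Theta^r(J_3)=\mathrm{Bim}(J_3^{\perp})$.

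For (iii), I would pass to preannihilators. Since $\mathrm{Ran}(J)$ is norm-closed, \thmref{thm3.3.7} and the bipolar theorem give ${}_{\perp}\mathrm{Bim}(J^{\perp})=\mathrm{Ran}(J)$ for every closed left ideal $J$. Using the elementary identity ${}_{\perp}A\cap{}_{\perp}B={}_{\perp}(A\vee B)$ together with part (i), one computes $\mathrm{Ran}(J_1)\cap\mathrm{Ran}(J_2)={}_{\perp}\big(\mathrm{Bim}(J_1^{\perp})\vee\mathrm{Bim}(J_2^{\perp})\big)={}_{\perp}\mathrm{Bim}((J_1\cap J_2)^{\perp})=\mathrm{Ran}(J_1\cap J_2)$. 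The only non-formal ingredient in (ii) and (iii) is the equality $\mathrm{Bim}(J^{\perp})=\mathrm{Ran}(J)^{\perp}$, that is, the reverse inclusion $\mathrm{Ran}(J)^{\perp}\subseteq\mathrm{Bim}(J^{\perp})$; this is exactly where the hypotheses are consumed, through the Fej\'er representation of \thmref{thm3.5} underlying \thmref{thm3.3.7}, and I expect Ditkin's property for $\wh{\G}$ to be what secures, for the ideals $J_1\cap J_2$ and $\overline{J_1+J_2}$, that this correspondence is compatible with the lattice operations $\cap$ and $\overline{+}$—which is precisely the content of (ii) and (iii). The remaining points are purely topological: matching weak*-closures on the $\BLTQ$ side with norm-closures on the $\TCQ$ side, handled by the bipolar theorem, and the continuity estimate for $\Theta^r$ used in (ii). I expect the reverse inclusions, resting on \thmref{thm3.3.7}, to be the main obstacle, the rest being formal lattice-duality bookkeeping.
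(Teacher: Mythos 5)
Your proposal is correct and follows essentially the same route as the paper: part (i) via the identity $(J_1\cap J_2)^{\perp}=\overline{J_1^{\perp}+J_2^{\perp}}^{w^*}$, part (ii) by reducing to $\overline{J_1+J_2}$ and combining \thmref{thm3.3.7} with linearity and continuity of $\Theta^r$ (your $\ker\Theta^r(J)$ is the paper's $\mathrm{Ran}(J)^{\perp}$ via \lemref{lem4.1}), and part (iii) by taking preannihilators in (i) together with \thmref{thm3.3.7}. Your closing observation is also consistent with the paper: its proof of (ii) and (iii) likewise consumes only the AP through \thmref{thm3.3.7} and never explicitly invokes Ditkin's property.
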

\begin{proof}
    \begin{enumerate}[label=(\roman*)]
        \item The inclusion $\bim{J_1}\vee \bim{J_2}\subseteq \mathrm{Bim}((J_1\cap J_2)^{\perp})$ is clear.  The reverse inclusion follows from the fact that $(J_1\cap J_2)^{\perp}=\overline{J_1^{\perp}+J_2^{\perp}}^{w^*}$ and the definition of $\bim{J}$.
        \item It is obvious that $\mathrm{Bim}(J_1^{\perp}\cap J_2^{\perp}) \subseteq\bim{J_1}\cap \bim{J_2}$. To show the reverse inclusion,
        let $J:=\overline{J_1+J_2}$. If $T\in \bltq$ annihilates both $\mathrm{Ran}(J_1)$ and $\mathrm{Ran}(J_2)$, then $\la T, \Theta^r(f_i)(\rho) \ra=0 $ for any $\rho \in \mc{T}(\ell^2(\G))$ $f_i\in J_i$ and $i=1,2$. Hence, by linearity and continuity of the map $\Theta^r$, $T$ must annihilate $\mathrm{Ran}(J)$. Therefore,
        \begin{equation}\label{eqRAN}
            \mathrm{Ran}(J_1)^{\perp}\cap \mathrm{Ran}(J_2)^{\perp}\subseteq \mathrm{Ran}(J)^{\perp}.
        \end{equation}
        Suppose that $T\in \bim{J_1}\cap \bim{J_2}$, then by Theorem \ref{thm3.3.7} and equation \eqref{eqRAN}, $T\in \bim{J}$. But $\bim{J}\subseteq \mathrm{Bim}(J_1^{\perp}\cap J_2^{\perp})$, since $J_i\subset J$, $i=1,2$. Hence $T\in \mathrm{Bim}(J_1^{\perp}\cap J_2^{\perp})$ and the proof is complete.
        \item From Theorem \ref{thm3.3.7} and first part of this proposition we have 
        $$\overline{\mathrm{Ran}(J_1)^{\perp}+\mathrm{Ran}(J_2)^{\perp}}^{w^*}=\mathrm{Ran}(J_1\cap J_2)^{\perp}.$$
       Taking pre-annihilators and using the general fact that $(I_1\cap I_2)^{\perp}=\overline{I_1^{\perp}+I_2^{\perp}}$, we have $\mathrm{Ran}(J_1)\cap \mathrm{Ran}(J_2)=\mathrm{Ran}(J_1\cap J_2)$.
    \end{enumerate}
\end{proof}

\begin{cor}
  Let $\G$ be a discrete quantum group with the AP. Let $\mc{L}$ be the set of all weak*-closed subspaces of $\bltq$ which are $\LIQH$--bimodules and invariant under $\wh{\tl}\op(M_{cb}^l(\LOQHOP))$. Then 
  $$\mc{L}=\{\bim{J}:\, J\subseteq \loq \;\;\text{is a closed left ideal}\,\},$$ and $\mc{L}$ is a lattice under the operations of intersection and weak*-closed linear span $\vee$. Moreover,
  \begin{equation*}
      \begin{aligned}
         & \mathrm{Bim}(J_1^{\perp})\cap\mathrm{Bim}(J_2^{\perp})=\bim{(J_1+J_2)},\\
          &\bim{J_1}\vee \bim{J_2}=\mathrm{Bim}((J_1\cap J_2)^{\perp}).
      \end{aligned}
  \end{equation*}
\end{cor}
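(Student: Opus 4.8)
The plan is to read everything off from \thmref{thm3.3.9}. That theorem already shows that a weak*-closed subspace of $\bltq$ lies in $\mc{L}$ if and only if it equals $\bim{J}$ for some closed left ideal $J\unlhd\loq$, which is exactly the first asserted equality $\mc{L}=\{\bim{J}\}$. What remains is to verify that $\mc{L}$ is closed under the two lattice operations and to determine, for each operation, the ideal it produces.

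For the lattice structure I would check that both $\cap$ and $\vee$ preserve the three properties defining membership in $\mc{L}$: being a weak*-closed subspace, an $\LIQH$--bimodule, and being invariant under every map in $\wh{\tl}\op(M_{cb}^l(\LOQHOP))$. For the intersection this is immediate. For the join the only point requiring care is that left and right multiplication by fixed elements of $\LIQH$, as well as each map $\wh{\tl}\op(\wh{m})$, are normal; hence they send the weak*-closed linear span $U_1\vee U_2$ of two members of $\mc{L}$ back into itself. Thus $U_1\cap U_2$ and $U_1\vee U_2$ both lie in $\mc{L}$, and $\mc{L}$ is a sublattice of the lattice of weak*-closed subspaces of $\bltq$.

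It then remains to identify the ideals attached to the two operations. The join formula $\bim{J_1}\vee\bim{J_2}=\mathrm{Bim}((J_1\cap J_2)^{\perp})$ is precisely part~(i) of the preceding proposition (valid for any discrete quantum group), so I would simply invoke it. For the meet formula I would recover the ideal from its ``diagonal part''. Since $\bim{J_1}\cap\bim{J_2}\in\mc{L}$, \thmref{thm3.3.9} furnishes a closed left ideal $J$ with $\bim{J_1}\cap\bim{J_2}=\bim{J}$. Intersecting with $\liq$ and using that every closed left ideal is essential in the discrete case, \propref{prop4.3} gives $\bim{J}\cap\liq=J^{\perp}$ and $\bim{J_i}\cap\liq=J_i^{\perp}$, whence
\[
J^{\perp}=\bigl(\bim{J_1}\cap\bim{J_2}\bigr)\cap\liq=J_1^{\perp}\cap J_2^{\perp}=(J_1+J_2)^{\perp}.
\]
Consequently $\bim{J}=\mathrm{Bim}(J^{\perp})=\mathrm{Bim}((J_1+J_2)^{\perp})=\bim{(J_1+J_2)}$, which is the second formula.

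Conceptually this says that $J\mapsto\bim{J}$ is an order-reversing bijection from the lattice of closed left ideals onto $\mc{L}$ (surjectivity being \thmref{thm3.3.9} and injectivity following from $\bim{J}\cap\liq=J^{\perp}$ together with the bipolar theorem), hence a lattice anti-isomorphism under which meets and joins are interchanged. I expect the meet formula to be the main obstacle: the inclusion $\bim{(J_1+J_2)}\subseteq\bim{J_1}\cap\bim{J_2}$ is mere monotonicity of $\mathrm{Bim}$, but the reverse inclusion is exactly where the approximation property is used---via \thmref{thm3.3.9}, which guarantees the abstract intersection is again of the concrete form $\bim{J}$, and via the faithful recovery of $J$ from $\bim{J}\cap\liq$ provided by \propref{prop4.3}. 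The remaining ingredients (normality of the multiplier maps and the elementary annihilator identity $J_1^{\perp}\cap J_2^{\perp}=(J_1+J_2)^{\perp}$) are routine.
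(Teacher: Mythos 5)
Your proof is correct. The identification $\mc{L}=\{\bim{J}\}$ and the join formula are handled exactly as the paper intends (via \thmref{thm3.3.9} and part~(i) of the proposition preceding the corollary), but your derivation of the meet formula takes a genuinely different route. The paper obtains $\mathrm{Bim}(J_1^{\perp})\cap\mathrm{Bim}(J_2^{\perp})=\mathrm{Bim}(J_1^{\perp}\cap J_2^{\perp})$ in part~(ii) of that proposition by working on the predual side: it shows $\mathrm{Ran}(J_1)^{\perp}\cap\mathrm{Ran}(J_2)^{\perp}\subseteq\mathrm{Ran}(\overline{J_1+J_2})^{\perp}$ directly and then converts back using $\bim{J}=\mathrm{Ran}(J)^{\perp}$ (\thmref{thm3.3.7}). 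You instead use \thmref{thm3.3.9} abstractly to produce some closed left ideal $J$ with $\bim{J_1}\cap\bim{J_2}=\bim{J}$, and then pin down $J^{\perp}=J_1^{\perp}\cap J_2^{\perp}=(J_1+J_2)^{\perp}$ by intersecting with $\liq$ via \propref{prop4.3} (legitimate, since in the discrete case every closed left ideal is essential). Both arguments are valid and use the AP in essentially the same place (through \thmref{thm3.3.7} resp.\ \thmref{thm3.3.9}); yours is slightly more structural in that it exhibits $J\mapsto\bim{J}$ as a lattice anti-isomorphism, and it has the incidental virtue of making transparent that the meet formula needs only the AP --- the paper's proposition~(ii) is stated with the additional hypothesis that $\wh{\G}$ has Ditkin's property, a hypothesis the corollary drops (and which the paper's own proof of~(ii) does not visibly use). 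One small redundancy: once the two displayed formulas are established, closure of $\mc{L}$ under $\cap$ and $\vee$ is automatic, so your separate normality argument for the join, while correct, is not needed.
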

\begin{remark}
If we remove the AP condition from the above corollary, the family of all weak*-closed jointly invariant subspaces of $\bltq$ remains a lattice under intersection and weak*-closed linear span. However, we may no longer be able to characterize this lattice in terms of closed left ideals of $\loq$.
\end{remark}
\begin{defn}
    Given a subset $\Sigma\subset \loq\cong M_{cb}^r(\loq)$, define
    $$\nul(\Sigma):=\{T\in\liq\,:\; \Theta^r(\sigma)(T)=0, \; \text{for all}\; \sigma\in \Sigma\}.$$
    We call $\nul(\Sigma)$ the null set of the operators on $\liq$. Additionally, define
    $$\loq \star\Sigma:=\overline{\la f\star \sigma : \sigma\in \Sigma,\; f \in \loq \ra}.$$
\end{defn}
 Obviously, $\loq\star\Sigma$ is a closed left ideal of $\loq$.
 \begin{remark}
   Given that $\Theta^r(f)|_{\liq}$ is the adjoint of  $\msig{f}$, we can easily verify that
    \begin{equation}\label{eq3.11}
        \nul(\Sigma)=(\loq\star\Sigma)^{\perp}.
    \end{equation}
    Hence, $\nul(\Sigma)$ is a weak*-closed right $\loq$--submodule of $\liq$.
 \end{remark}
The following proposition demonstrates that it is sufficient to study sets of the form $\nul(J)$ where $J$ is a closed ideal of $\loq$.
\begin{prop}\label{prop3.3.15}
    For any subset $\Sigma$ of $\loq$, $$\nul(\Sigma)=\nul(\loq \star \Sigma).$$
\end{prop}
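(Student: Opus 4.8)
The plan is to reduce the claim to the annihilator identity \eqref{eq3.11} already recorded in the preceding remark, which states $\nul(\Sigma)=(\loq\star\Sigma)^{\perp}$ for every subset $\Sigma\subseteq\loq$. Write $J:=\loq\star\Sigma$; as noted just before the statement, $J$ is a closed left ideal of $\loq$. Applying \eqref{eq3.11} to $\Sigma$ and then to $J$ itself gives
$$\nul(\Sigma)=(\loq\star\Sigma)^{\perp}=J^{\perp},\qquad \nul(\loq\star\Sigma)=\nul(J)=(\loq\star J)^{\perp}.$$
Hence the proposition is equivalent to the single equality of closed subspaces $\loq\star J=J$.

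To establish $\loq\star J=J$ I would invoke discreteness: $\loq$ is unital under $\star$, with unit $\epsilon$ (see Remark \ref{rem2.5.3}). The inclusion $\loq\star J\subseteq J$ is precisely the left-ideal property of $J$ (its defining closure lies inside the closed ideal $J$), and the reverse inclusion follows from $j=\epsilon\star j\in\loq\star J$ for each $j\in J$. Combined with the two displayed identities this yields $\nul(\Sigma)=J^{\perp}=(\loq\star J)^{\perp}=\nul(\loq\star\Sigma)$.

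If one prefers to argue directly with the maps, the same two inclusions appear transparently. Since $\sigma=\epsilon\star\sigma\in\loq\star\Sigma$, we have $\Sigma\subseteq\loq\star\Sigma$, and as $\nul(\cdot)$ is inclusion-reversing this gives $\nul(\loq\star\Sigma)\subseteq\nul(\Sigma)$ for free. For the reverse inclusion, take $T\in\nul(\Sigma)$; for $\sigma\in\Sigma$ and $f\in\loq$ the homomorphism property of $\Theta^r$ from Theorem \ref{thm1} yields $\Theta^r(f\star\sigma)(T)=\Theta^r(f)\bigl(\Theta^r(\sigma)(T)\bigr)=0$, and since $\Theta^r$ is completely isometric (hence norm continuous) this persists for every $g$ in the closed span $\loq\star\Sigma$, so $T\in\nul(\loq\star\Sigma)$. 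I do not expect any real obstacle here: all the weight is carried by \eqref{eq3.11} together with unitality of $\loq$, the only point needing a line of care being that the closure defining $\loq\star J$ does not enlarge the left ideal $J$.
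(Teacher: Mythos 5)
Your proposal is correct, and the ``direct'' argument in your final paragraph is essentially the paper's own proof: unitality of $\loq$ gives $\Sigma\subseteq\loq\star\Sigma$, hence one inclusion, while the homomorphism property of $\Theta^r$ (so that $\Theta^r(g\star\sigma)(T)=\Theta^r(g)\bigl(\Theta^r(\sigma)(T)\bigr)=0$), extended by linearity and continuity to the closed span, gives the other. Your preliminary reduction via \eqref{eq3.11} to the identity $\loq\star J=J$ is just a harmless repackaging of the same two facts and is also valid.
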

\begin{proof}
    Since $\loq$ is unital, then $\Sigma\subset\loq \star \Sigma$. Therefore, if $T\in \nul(\loq \star \Sigma)$ then for any $\sigma\in \Sigma$, $\Theta^r(\sigma)(T)=0$. This implies that $\nul(\loq \star \Sigma)\subseteq\nul(\Sigma)$.
 To show the reverse inclusion, let $T\in \nul(\Sigma)$ and $f=g\star \sigma\in \loq \star \Sigma$, for some $\sigma\in \Sigma$. Then
 $$\Theta^r(f)(T)=\Theta^r(\msig{g})(T)=\Theta^r(g\star \sigma)(T)=\Theta^r(g)\circ\Theta^r(\sigma)(T)=0.$$
 Hence, $T\in \nul(\loq \star \Sigma)$.
\end{proof}
\begin{lem}\label{lem3.3.16}
    Let $J$ be a closed left ideal of $\loq$. Then
    $$\nul(J)=J^{\perp}.$$
\end{lem}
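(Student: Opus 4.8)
The plan is to reduce everything to the identification $\nul(J)=(\loq\star J)^{\perp}$ recorded in the remark immediately preceding the lemma (Equation~\eqref{eq3.11}), and then to observe that for a closed left ideal $J$ one has the set equality $\loq\star J=J$. Once this is in hand the conclusion is immediate.

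First I would invoke \eqref{eq3.11} with $\Sigma=J$, which gives $\nul(J)=(\loq\star J)^{\perp}$. It therefore suffices to prove $\loq\star J=J$. Since $J$ is a left ideal, $f\star g\in J$ for every $f\in\loq$ and $g\in J$; as $J$ is norm closed, taking the closed linear span of such products keeps us inside $J$, so that $\loq\star J\subseteq J$. For the reverse inclusion I use discreteness of $\G$: by Remark~\ref{rem2.5.3} the algebra $\loq$ is unital with unit $\epsilon$, whence every $g\in J$ can be written $g=\epsilon\star g\in\loq\star J$, giving $J\subseteq\loq\star J$.

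Combining the two inclusions yields $\loq\star J=J$, and substituting back into \eqref{eq3.11} produces $\nul(J)=J^{\perp}$, as desired. Equivalently, one may cite \propref{prop3.3.15}, namely $\nul(\Sigma)=\nul(\loq\star\Sigma)$, together with the same observation $\loq\star J=J$. The argument is essentially formal, so there is no real obstacle; the only point genuinely requiring attention is the inclusion $J\subseteq\loq\star J$, which is exactly where unitality of $\loq$ (hence discreteness of $\G$) is used and which would fail in the non-unital, non-discrete setting.
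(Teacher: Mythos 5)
Your proof is correct and follows essentially the same route as the paper: both reduce to relation \eqref{eq3.11} and use unitality of $\loq$ (discreteness of $\G$) together with the left ideal property to get $J\subseteq\loq\star J\subseteq J$, hence $\nul(J)=(\loq\star J)^{\perp}=J^{\perp}$. The only cosmetic difference is that the unit $\epsilon\star g=g$ is just unitality of the convolution algebra $\loq$ (stated in the preliminaries), rather than the module statement of Remark~\ref{rem2.5.3}, but this does not affect the argument.
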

\begin{proof}
   Since $\loq$ is unital, for a closed left ideal $J\unlhd \loq$, we have $J\subseteq \loq\star J\subseteq J$. Thus, $J^{\perp}=(\loq\star J)^{\perp}=\nul(J)$  by relation \eqref{eq3.11}. 
\end{proof}
We now pass from $\liq$ to $\bltq$: The algebra $M_{cb}^r(\loq)\cong\loq$ acts on $\bltq$ via $\Theta^r(\sigma)$ for $\sigma\in \loq$.
\begin{defn}
     Given a subset $\Sigma\subset \loq\cong M_{cb}^r(\loq)$, define
     $$\widetilde{\nul}(\Sigma):=\{T\in \bltq:\;\; \Theta^r(\sigma)(T)=0, \;\;\text{for all} \;\;\sigma\in \Sigma\}.$$
     Then, we have $\nul(\Sigma)=\widetilde{\nul}(\Sigma)\cap \liq$.
\end{defn}
The proof of the next proposition follows exactly as in Proposition \ref{prop3.3.15}.
\begin{prop}\label{prop3.3.18}
    If $\Sigma\subset \loq$, then
    $$\widetilde{\nul}(\Sigma)
    =\widetilde{\nul}(\loq \star \Sigma).$$
\end{prop}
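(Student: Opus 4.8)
The plan is to transcribe the argument of \propref{prop3.3.15} verbatim, replacing $\liq$ by $\bltq$ throughout. Nothing in that earlier proof used the fact that the operators $\Theta^r(\sigma)$ were restricted to $\liq$; the only structural inputs were (a) the unitality of $\loq$ (which holds since $\G$ is discrete) and (b) the fact that $\Theta^r:\loq\to\mathcal{CB}^{\sigma}(\bltq)$ is a homomorphism for the convolution product $\star$. Both inputs are available on all of $\bltq$, so the same two-inclusion scheme applies.

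First I would observe that $\widetilde{\nul}$ is order-reversing in its argument: if $\Sigma_1\subseteq\Sigma_2$ then $\widetilde{\nul}(\Sigma_2)\subseteq\widetilde{\nul}(\Sigma_1)$, since annihilating $\Theta^r(\sigma)$ for every $\sigma\in\Sigma_2$ in particular annihilates it for every $\sigma\in\Sigma_1$. Because $\loq$ is unital with unit $\epsilon$, we have $\sigma=\epsilon\star\sigma\in\loq\star\Sigma$, whence $\Sigma\subseteq\loq\star\Sigma$, and therefore $\widetilde{\nul}(\loq\star\Sigma)\subseteq\widetilde{\nul}(\Sigma)$. For the reverse inclusion I would take $T\in\widetilde{\nul}(\Sigma)$ and check that $\Theta^r(f)(T)=0$ for every $f\in\loq\star\Sigma$. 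On a generator $f=g\star\sigma$ with $g\in\loq$ and $\sigma\in\Sigma$, the homomorphism property of $\Theta^r$ gives
\[
\Theta^r(g\star\sigma)(T)=\big(\Theta^r(g)\circ\Theta^r(\sigma)\big)(T)=\Theta^r(g)\big(\Theta^r(\sigma)(T)\big)=\Theta^r(g)(0)=0,
\]
using $\Theta^r(\sigma)(T)=0$. By linearity this extends to the span $\la g\star\sigma : g\in\loq,\ \sigma\in\Sigma\ra$, and since $\Theta^r$ is a complete contraction the map $f\mapsto\Theta^r(f)(T)$ is norm-continuous, so the vanishing passes to the norm-closure $\loq\star\Sigma$. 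Hence $T\in\widetilde{\nul}(\loq\star\Sigma)$, giving $\widetilde{\nul}(\Sigma)\subseteq\widetilde{\nul}(\loq\star\Sigma)$ and thus equality.

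There is no substantial obstacle here, as the paper itself signals by writing that the proof ``follows exactly as in \propref{prop3.3.15}''. The only point warranting a word of care is the passage from the generating elements $g\star\sigma$ to the entire closed left ideal $\loq\star\Sigma$; this is precisely where the norm-continuity of $\Theta^r$ (equivalently, its complete contractivity established in \eqref{eq5} and the surrounding discussion) is invoked, exactly as in the $\liq$ case.
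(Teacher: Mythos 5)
Your proof is correct and matches the paper's approach exactly: the paper itself states that \propref{prop3.3.18} ``follows exactly as in Proposition \ref{prop3.3.15}'', and your transcription of that two-inclusion argument (unitality of $\loq$ for one direction, the homomorphism property of $\Theta^r$ for the other) is precisely what is intended. Your added remark about passing from the generators $g\star\sigma$ to the norm-closed span via continuity of $f\mapsto\Theta^r(f)(T)$ is a small but legitimate point of care that the paper leaves implicit.
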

\begin{prop}\label{prop3.3.19}
For every closed left ideal of $\loq$, we have $$\widetilde{\nul}(J)=\mathrm{Ran}(J)^{\perp}.$$
\end{prop}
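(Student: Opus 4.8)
The plan is to observe that this identity is essentially immediate once we unwind the definition of $\widetilde{\nul}$ and combine it with \lemref{lem4.1}. The statement of the proposition records no hypotheses beyond $J$ being a closed left ideal, and indeed none of the deeper machinery (the Fej\'{e}r representation, the AP) is needed here; the content is already packaged in the earlier lemma.

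First I would rewrite the left-hand side purely in terms of kernels. By definition,
$$\widetilde{\nul}(J)=\{T\in\bltq:\;\Theta^r(f)(T)=0\text{ for all }f\in J\},$$
which is exactly the intersection of the kernels of the family of normal maps $\{\Theta^r(f)\mid f\in J\}$, i.e.\ $\ker\Theta^r(J)$ in the notation introduced in \lemref{lem4.1}. Then I would invoke \lemref{lem4.1} directly: it asserts precisely that $\mathrm{Ran}(J)^{\perp}=\ker\Theta^r(J)$. Chaining the two equalities yields
$$\widetilde{\nul}(J)=\ker\Theta^r(J)=\mathrm{Ran}(J)^{\perp},$$
which is the claim.

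I do not expect any genuine obstacle. The only point that requires a word of care is the duality bookkeeping implicit in passing between ``$T$ annihilates $\Theta^r(f)_*(\rho)$ for every $\rho\in\TCQ$'' and ``$\Theta^r(f)(T)=0$'', which rests on the weak*-continuity of $\Theta^r(f)$ together with the fact that $\rho$ ranges over a set whose span is norm-dense in $\TCQ$ (so that it separates points of $\bltq$). This equivalence is exactly what is already established in the proof of \lemref{lem4.1}, so it can simply be cited. Consequently the proof reduces to the two-line identification above.
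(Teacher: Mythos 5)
Your proposal is correct and is essentially the paper's own argument: the paper simply re-runs the duality computation $\langle T,\Theta^r(f)_*(\rho)\rangle=\langle\Theta^r(f)(T),\rho\rangle$ inline, whereas you note that this is exactly the content of Lemma~\ref{lem4.1} and that $\widetilde{\nul}(J)=\ker\Theta^r(J)$ by definition. Citing the lemma rather than repeating the computation is a harmless (arguably cleaner) presentational difference, not a different proof.
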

\begin{proof}
    Let $T\in \mathrm{Ran}(J)^{\perp}$, then $\la T, \Theta^r(f)_*(\rho) \ra=0$ for any $f\in J$ and $\rho\in \mc{T}(\ltq)$. Therefore, $\la \Theta^r(f)(T), \rho \ra=0$, hence $T\in \widetilde{\nul}(J)$. Similarly, the other inclusion holds as well.
\end{proof}
\begin{cor}\label{cor3.3.20}
    If $\G$ is a discrete quantum group with the AP, then for any closed left ideal $J$ of $\loq$ we have
    $$\widetilde{\nul}(J)=\bim{J}.$$
\end{cor}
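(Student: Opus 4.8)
The plan is to deduce the identity directly from two results already in hand, with the approximation property entering through only one of them. The starting point is \propref{prop3.3.19}, which identifies $\widetilde{\nul}(J)=\mathrm{Ran}(J)^{\perp}$ for every closed left ideal $J\unlhd\loq$; this holds for an arbitrary discrete quantum group and uses nothing beyond the definitions of $\widetilde{\nul}$ and $\mathrm{Ran}$, together with the pairing between $\Theta^r(f)$ and its pre-adjoint $\Theta^r(f)_*$.

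The second ingredient, and the only place the AP is needed, is \thmref{thm3.3.7}: under the AP every closed left ideal satisfies $\mathrm{Bim}(J^{\perp})=\mathrm{Ran}(J)^{\perp}$. Recalling that $\bim{J}$ is by definition $\mathrm{Bim}(J^{\perp})$, chaining these two equalities gives
$$\widetilde{\nul}(J)=\mathrm{Ran}(J)^{\perp}=\mathrm{Bim}(J^{\perp})=\bim{J},$$
which is the desired conclusion.

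I expect no genuine obstacle here: the corollary is a formal consequence of the preceding identifications, and all the substantive work---the Fej\'{e}r representation of \thmref{thm3.5} feeding into \thmref{thm3.3.7}, and the pairing computation behind \propref{prop3.3.19}---has already been carried out. The only thing worth making explicit is the expansion of the macro $\bim{J}$ into $\mathrm{Bim}(J^{\perp})$, so that the reader sees why the two statements align; no new estimate or limiting argument is required.
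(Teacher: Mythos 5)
Your proof is correct and is exactly the intended argument: the paper states this as an immediate corollary of \propref{prop3.3.19} ($\widetilde{\nul}(J)=\mathrm{Ran}(J)^{\perp}$) combined with \thmref{thm3.3.7} ($\mathrm{Bim}(J^{\perp})=\mathrm{Ran}(J)^{\perp}$ under the AP), which is precisely the chain of equalities you give.
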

It was shown in \cite[Theorem 2.8]{anoussis2016idealsFourier} that for a locally compact group $G$ and any subset $\Sigma \subseteq M_{cb}A(G)$ we have $\widetilde{\nul}(\Sigma)=\mathrm{Bim}(\nul(\Sigma))$ . The following theorem is an extension of \cite[Theorem 2.8]{anoussis2016idealsFourier} to discrete quantum groups with the AP.
\begin{thm}\label{thm3.3.21}
    Let $\G$ be a discrete quantum group with the AP. Then for any subset $\Sigma\subseteq\loq$,
    $$\widetilde{\nul}(\Sigma)=\mathrm{Bim}(\nul(\Sigma)).$$
\end{thm}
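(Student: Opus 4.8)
The plan is to reduce the assertion for an arbitrary subset $\Sigma$ to the already-settled case of a closed left ideal, where the identity holds under the AP by Corollary~\ref{cor3.3.20}. The correct ideal to attach to $\Sigma$ is the closed left ideal $J := \loq \star \Sigma$ that it generates; this is a closed left ideal because $\loq$ is unital and $\star$ is associative, so every ideal-level result from this subsection applies to it verbatim.

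First I would rewrite the left-hand side. By Proposition~\ref{prop3.3.18} we have $\widetilde{\nul}(\Sigma) = \widetilde{\nul}(\loq \star \Sigma) = \widetilde{\nul}(J)$. Since $J$ is a closed left ideal and $\G$ has the AP, Corollary~\ref{cor3.3.20} gives $\widetilde{\nul}(J) = \bim{J} = \mathrm{Bim}(J^{\perp})$.

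Next I would rewrite the right-hand side. By Proposition~\ref{prop3.3.15}, $\nul(\Sigma) = \nul(\loq \star \Sigma) = \nul(J)$, and Lemma~\ref{lem3.3.16} identifies $\nul(J) = J^{\perp}$ for the closed left ideal $J$. Hence $\mathrm{Bim}(\nul(\Sigma)) = \mathrm{Bim}(J^{\perp})$. Comparing the two computations, both sides equal $\mathrm{Bim}(J^{\perp})$, which is exactly the claimed equality.

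The content is purely organizational: the theorem is a formal consequence of the star-invariance results (Propositions~\ref{prop3.3.15} and~\ref{prop3.3.18}), the identification of $\nul$ with the pre-annihilator for ideals (Lemma~\ref{lem3.3.16}), and the AP-dependent equality of Corollary~\ref{cor3.3.20}. There is no analytic difficulty here; the only two points meriting a second glance are confirming that $\loq \star \Sigma$ is genuinely a closed left ideal, so that the ideal-level lemmas transfer, and observing that $\mathrm{Bim}$ applied to $\nul(\Sigma) \subseteq \liq$ coincides with $\mathrm{Bim}$ applied to $J^{\perp}$ once the equality $\nul(\Sigma) = J^{\perp}$ is in hand. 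The essential input that genuinely uses the AP is Corollary~\ref{cor3.3.20}, which in turn rests on the Fej\'er representation of Theorem~\ref{thm3.5}.
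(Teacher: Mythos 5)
Your proposal is correct and follows essentially the same route as the paper: both reduce to the closed left ideal $J=\loq\star\Sigma$ via Proposition~\ref{prop3.3.18} and Corollary~\ref{cor3.3.20}; the only cosmetic difference is that you identify $\nul(\Sigma)=J^{\perp}$ by chaining Proposition~\ref{prop3.3.15} with Lemma~\ref{lem3.3.16}, while the paper invokes equation~\eqref{eq3.11} directly (Lemma~\ref{lem3.3.16} is itself proved from that equation).
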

\begin{proof}
    It follows form equation \eqref{eq3.11} that $\mathrm{Bim}(\nul(\Sigma))=\bim{(\loq \star \Sigma)}$. But from Corollary \ref{cor3.3.20}, $\bim{(\loq \star \Sigma)}=\widetilde{\nul}(\loq \star \Sigma)$ , and finally by Proposition \ref{prop3.3.18} we have $\widetilde{\nul}(\loq \star \Sigma)=\widetilde{\nul}(\Sigma)$.
\end{proof}
We finish the subsection with some applications to the theory of harmonic operators over discrete quantum groups, building on the classical theory over locally compact groups (see, e.g., \cites{chu2004harmonicFunctions,neufangRunde2007harmonicOperators,anoussis2016idealsFourier}).
\begin{defn}
    Let $\Sigma\subseteq\loq$. We call  an element $T\in \liq$ a $\Sigma$--harmonic functional if $\Theta^r(\sigma)(T)=T$ for all $\sigma\in \Sigma$. We denote the set of all $\Sigma$--harmonic functionals by $\mc{H}_{\Sigma}$.

   Similarly, we call $T\in \bltq$ a $\Sigma$--harmonic operator if $\Theta^r(\sigma)(T)=T$ for all $\sigma\in \Sigma$, and denote the set of all $\Sigma$--harmonic operators by $\widetilde{\mc{H}}_{\Sigma}$.
\end{defn}
If we let $\Sigma'=\{\sigma-1\,:\;\; \sigma\in \Sigma\}$, the we have
\begin{align*}
    &\mc{H}_{\Sigma}=\{T\in \liq\,:\;\;\Theta^r(\sigma)(T)=T\,\;\;\text{for all}\;\;\sigma\in \Sigma \}=\nul(\Sigma');\\
    &\widetilde{\mc{H}}_{\Sigma}=\{T\in \bltq:\; \Theta^r(\sigma)(T)=T\,\;\;\text{for all}\;\;\sigma\in \Sigma \}=\widetilde{\nul}(\Sigma').
\end{align*}
As an immediate consequence of Theorem \ref{thm3.3.21}, we have
\begin{cor}\label{cor3.3.23}
    For a discrete quantum group $\G$ with the AP, the weak*-closed $\LIQH$--bimodule $\mathrm{Bim}(\mc{H}_{\Sigma})$ generated by $\mc{H}_{\Sigma}$ coincides with $\widetilde{\mc{H}}_{\Sigma}$.
\end{cor}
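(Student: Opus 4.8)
The plan is to obtain this as a direct specialization of \thmref{thm3.3.21}, applied not to $\Sigma$ itself but to the shifted set $\Sigma'=\{\sigma-1:\sigma\in\Sigma\}$. This substitution is legitimate because $\G$ is discrete, so $\loq$ is unital and $\sigma-1\in\loq$ for every $\sigma\in\Sigma$; hence $\Sigma'\subseteq\loq$ is an admissible input to \thmref{thm3.3.21}. The two ingredients I would invoke are precisely the reformulations recorded immediately before the statement, namely
$$\mc{H}_{\Sigma}=\nul(\Sigma')\qquad\text{and}\qquad\widetilde{\mc{H}}_{\Sigma}=\widetilde{\nul}(\Sigma'),$$
which hold because $\Theta^r$ is a homomorphism sending the unit of $\loq$ to $\id$, so that the harmonic condition $\Theta^r(\sigma)(T)=T$ is equivalent to the vanishing condition $\Theta^r(\sigma-1)(T)=0$, both on $\liq$ and on $\bltq$.

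First I would apply \thmref{thm3.3.21} with $\Sigma'$ in place of $\Sigma$, yielding $\widetilde{\nul}(\Sigma')=\mathrm{Bim}(\nul(\Sigma'))$. Then I would substitute the two identifications above into both sides: the left-hand side becomes $\widetilde{\mc{H}}_{\Sigma}$, while on the right-hand side $\nul(\Sigma')=\mc{H}_{\Sigma}$, so that $\mathrm{Bim}(\nul(\Sigma'))$ is exactly the weak*-closed $\LIQH$--bimodule generated by $\mc{H}_{\Sigma}$. Chaining the resulting equalities gives
$$\widetilde{\mc{H}}_{\Sigma}=\widetilde{\nul}(\Sigma')=\mathrm{Bim}(\nul(\Sigma'))=\mathrm{Bim}(\mc{H}_{\Sigma}),$$
which is the assertion.

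I do not expect any genuine obstacle here: all of the substantive content---the Fej\'{e}r representation of \thmref{thm3.5} and the identification $\mathrm{Bim}(J^{\perp})=\mathrm{Ran}(J)^{\perp}$ of \thmref{thm3.3.7}, both feeding into \thmref{thm3.3.21}---has already been established, and the AP hypothesis is inherited through \thmref{thm3.3.21}. The only point I would verify explicitly is a matter of bookkeeping: that the operation $\mathrm{Bim}(\cdot)$ appearing on the right of \thmref{thm3.3.21}, applied to the subset $\nul(\Sigma')\subseteq\liq$, denotes the same weak*-closed $\LIQH$--bimodule generation as the $\mathrm{Bim}(\mc{H}_{\Sigma})$ in the statement; since $\nul(\Sigma')=\mc{H}_{\Sigma}$ as subsets of $\liq$, this is immediate.
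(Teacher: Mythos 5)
Your proposal is correct and is essentially the paper's own argument: the paper records the identifications $\mc{H}_{\Sigma}=\nul(\Sigma')$ and $\widetilde{\mc{H}}_{\Sigma}=\widetilde{\nul}(\Sigma')$ immediately before the corollary and then deduces it as a direct application of Theorem~\ref{thm3.3.21} to the shifted set $\Sigma'$. Your additional remarks (unitality of $\loq$, $\Theta^r(1)=\id$, and the bookkeeping on $\mathrm{Bim}(\cdot)$) just make explicit what the paper leaves implicit.
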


It was shown in Theorem \ref{thm3.3.9} that weak*-closed subspaces $U\subseteq\bltq$ are jointly invariant if and only if $U=\bim{J}$ for some closed left ideal $J\unlhd\loq$. From Corollary \ref{cor3.3.20} we have $\bim{J}=\widetilde{\nul}(J)$, providing another equivalent description. Moreover, we can replace $J$ with a subset $\Sigma \subseteq \loq$.
\begin{prop}\label{prop3.3.24}
    Suppose $\G$ is a discrete quantum group with the AP. Let $U\subseteq \bltq$ be a weak*-closed subspace. The following are equivalent.
    \begin{enumerate}
        \item $U$ is a $L^{\infty}(\h{\G})$--bimodule, invariant under $\wh{\tl}\op(M_{cb}^l(\LOQHOP))\cong \cb{\liq}{\LIQHOP}{\BLTQ}$;
        \item there exists a closed left ideal $J\unlhd\loq$ such that $U=\widetilde{\nul}(J)$;
        \item there exists a subset $\Sigma\subseteq\loq$ such that $U=\widetilde{\nul}(\Sigma)$.
    \end{enumerate}
\end{prop}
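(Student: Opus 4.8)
The plan is to derive all three equivalences purely by assembling results already established in this subsection, since condition (1) is \emph{verbatim} condition (2) of \thmref{thm3.3.9}. Concretely, I would prove $(1)\Leftrightarrow(2)$ by combining \thmref{thm3.3.9} with \corref{cor3.3.20}, and then close the argument with the essentially formal equivalence $(2)\Leftrightarrow(3)$, which rests on the fact that $\loq\star\Sigma$ is always a closed left ideal together with \propref{prop3.3.18}.

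For $(1)\Leftrightarrow(2)$: first note that condition (1) here is literally condition (2) of \thmref{thm3.3.9}. Hence, by \thmref{thm3.3.9}, condition (1) holds if and only if there exists a closed left ideal $J\unlhd\loq$ with $U=\bim{J}$ (the further identity $\bim{J}=\mathrm{Ran}(J)^{\perp}$ appearing in that theorem is automatic under the AP by \thmref{thm3.3.7}). Now \corref{cor3.3.20} gives $\widetilde{\nul}(J)=\bim{J}$ for every closed left ideal $J$. Therefore the existence of such a $J$ with $U=\bim{J}$ is exactly the assertion that $U=\widetilde{\nul}(J)$ for some closed left ideal $J$, which is condition (2). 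This yields $(1)\Leftrightarrow(2)$.

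For $(2)\Leftrightarrow(3)$: the implication $(2)\Rightarrow(3)$ is immediate, since a closed left ideal is in particular a subset $\Sigma\subseteq\loq$, so taking $\Sigma=J$ gives $U=\widetilde{\nul}(\Sigma)$. For $(3)\Rightarrow(2)$, suppose $U=\widetilde{\nul}(\Sigma)$ for some subset $\Sigma\subseteq\loq$. By \propref{prop3.3.18} we have $\widetilde{\nul}(\Sigma)=\widetilde{\nul}(\loq\star\Sigma)$, and $\loq\star\Sigma$ is a closed left ideal of $\loq$ (as observed immediately after its definition). Setting $J:=\loq\star\Sigma$ then exhibits $U=\widetilde{\nul}(J)$ with $J$ a closed left ideal, which is condition (2).

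Since every step is a direct citation, there is no genuine analytic obstacle: the content lives entirely in \thmref{thm3.3.9}, \corref{cor3.3.20}, and \propref{prop3.3.18}. The only point requiring care is bookkeeping — verifying that the invariance hypothesis in (1) is exactly the standing hypothesis of \thmref{thm3.3.9}(2), and that \corref{cor3.3.20} is invoked \emph{only} for honest closed left ideals rather than arbitrary subsets, the passage from a general $\Sigma$ to an ideal being supplied precisely by the reduction $\widetilde{\nul}(\Sigma)=\widetilde{\nul}(\loq\star\Sigma)$.
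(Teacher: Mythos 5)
Your proposal is correct and follows essentially the same route as the paper: the paper also deduces $(1)\Leftrightarrow(2)$ from \thmref{thm3.3.9} together with \corref{cor3.3.20}, treats $(2)\Rightarrow(3)$ as trivial, and closes the cycle by reducing a general $\Sigma$ to the closed left ideal $\loq\star\Sigma$ via \propref{prop3.3.18}. The only difference is organizational (you prove two equivalences rather than a three-step cycle), and your explicit citation of \propref{prop3.3.18} in the $(3)\Rightarrow(2)$ step makes the reduction slightly more transparent than the paper's terse statement.
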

\begin{proof}
    We discussed the implication $(1)\Rightarrow (2)$ before the proposition. $(2)\Rightarrow (3)$ is trivial, and $(3)\Rightarrow (1)$ follows from Corollary \ref{cor3.3.20} and Theorem \ref{thm3.3.9}.
\end{proof}
It was shown in \cite[Remark 2.16]{anoussis2016idealsFourier} that any weak*-closed jointly invariant subspace $U\subseteq \BLT$ corresponds to $\widetilde{\mc{H}}_{\Sigma}$ for some $\Sigma\subseteq M_{cb}A(G)$. Following Corollary \ref{cor3.3.23}, we observe that every weak*-closed jointly invariant subspace $U\subseteq\bltq$ is of the form $U=\widetilde{\mc{H}}_{\Sigma}$ for some $\Sigma\subseteq\loq$.

\subsection{Structure of $C(\wh{\G})$--bimodules in $\kltq$}
In this section we apply our Fej\'{e}r decomposition on the $C^*$-algebraic setting to characterize certain norm closed $C(\wh{\G})$--bimodules of $\kltq$ arising from weak*-closed ideals in $\ell^1(\G)$.

As previously mentioned, for a locally compact quantum group $\G$, $\LOQ$ can be identified with a norm closed two-sided ideal of $M(\G)=C_0(\G)^*$ via the mapping $\LOQ \ni f\mapsto f|_{C_0(\G)}\in C_0(\G)^*$. If $\G$ is discrete, then $\loq$ is unital and  we may identify $c_0(\G)^*$ with $\loq$. 

From this point forward,  we assume that $\G$ is a discrete quantum group.

Let $J \unlhd \loq$ be a weak*-closed left ideal. Then $J_{\perp}=\{x\in c_0(\G): \la x, f\ra=0,\; \text{for all}\; f\in J\}$ is a norm closed right $\loq$--submodule of $c_0(\G)$. Moreover, it was shown in \cite[Theorem 3.1]{amini2017compact} that $\G$ is discrete if and only if $c_0(\G)=\liq\cap \kltq$. Therefore $J_{\perp}\subseteq\kltq$.

Given a discrete quantum group $\G$ and a weak*-closed left ideal $J\unlhd\loq$, define
\begin{equation*}
    \mathrm{Bim}(J_{\perp})=\overline{\la\wh{x}a\wh{y}\,|\; a\in J_{\perp}, \wh{x}, \wh{y}\in C(\wh{\G})\ra}^{\|\cdot\|}
\end{equation*}
and 
\begin{equation*}
    \mathrm{RAN}(J)=\overline{\la\Theta^r(f)_*(\rho)\mid f\in J, \ \rho\in\TCQ\ra}^{w^*}
\end{equation*}
where $\Theta^r(f)_*$ is the pre-adjoint of the normal completely bounded map $\Theta^r(f)$. Obviously $\mathrm{Bim}(J_{\perp})$ is an $C(\wh{\G})$--bimodule in $\kltq$. Since $\Theta^r(v)$ is an $C(\wh{\G})$--bimodule map for any $v\in M_{cb}^r(\loq)$, by duality, $\Theta^r(v)_*$ is an $C(\wh{\G})$--bimodule map on $\TCQ$Then $\mathrm{RAN}(J)_{\perp}$ is also a $C(\wh{\G})$--bimodule in $\kltq$.

Similar to the previous subsection we will show that under the AP we have the equality $\mathrm{Bim}(J_{\perp})=\mathrm{RAN}(J)_{\perp}$.

\begin{lem}
    Let $\G$ be a discrete quantum group. For any weak*-closed left ideal $J\unlhd \loq$,
    $$\mathrm{Bim}(J_{\perp})\subseteq \mathrm{RAN}(J)_{\perp}.$$
\end{lem}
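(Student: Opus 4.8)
The plan is to follow the proof of \propref{prop4.3} verbatim, reading all closures and (pre-)annihilators at the $C^*$-level ($\kltq$ and $C(\wh{\G})$) instead of the von Neumann level. The first step is the $C^*$-analogue of \lemref{lem4.1}. Since $\mathrm{RAN}(J)$ is the weak*-closed linear span in $\TCQ=\kltq^*$ of the set $\{\Theta^r(f)_*(\rho)\mid f\in J,\ \rho\in\TCQ\}$, its pre-annihilator in $\kltq$ coincides with the pre-annihilator of this generating set. For $T\in\kltq\subseteq\bltq$ and $\rho\in\TCQ$ the pre-adjoint relation gives $\la\Theta^r(f)_*(\rho),T\ra=\la\rho,\Theta^r(f)(T)\ra$, and since $\TCQ$ separates the points of $\bltq$ I would conclude
$$\mathrm{RAN}(J)_{\perp}=\{T\in\kltq:\Theta^r(f)(T)=0\ \text{for all}\ f\in J\}=\ker\Theta^r(J)\cap\kltq,$$
which is a norm-closed subspace of $\kltq$.

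Second, I would record the key vanishing statement: $\Theta^r(f)(a)=0$ for every $a\in J_{\perp}$ and $f\in J$. Because $\loq$ is unital, every weak*-closed left ideal $J$ is a closed \emph{essential} left ideal (indeed $\loq\star J=J$), so \lemref{lem4.1} and \lemref{lem4.2} together yield $\ker\Theta^r(J)\cap\liq=J^{\perp}$. Now $J_{\perp}=J^{\perp}\cap c_0(\G)\subseteq J^{\perp}$ and $c_0(\G)\subseteq\liq$, so any $a\in J_{\perp}$ lies in $\ker\Theta^r(J)$, which is exactly the claim.

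Finally I would combine these two steps with the bimodule property of $\Theta^r$ to treat the generators of $\mathrm{Bim}(J_{\perp})$. Fix $a\in J_{\perp}$ and $\wh{x},\wh{y}\in C(\wh{\G})$. By \cite[Theorem 3.1]{amini2017compact} we have $c_0(\G)=\liq\cap\kltq$, so $a\in\kltq$; as $\kltq$ is a two-sided ideal of $\bltq$, the product $\wh{x}a\wh{y}$ is again compact. Since $\Theta^r(f)\in\cb{\LIQH}{\LIQ}{\BLTQ}$ is an $\LIQH$-bimodule map and $C(\wh{\G})\subseteq\LIQH$, it is in particular a $C(\wh{\G})$-bimodule map, whence for every $f\in J$
$$\Theta^r(f)(\wh{x}a\wh{y})=\wh{x}\,\Theta^r(f)(a)\,\wh{y}=0$$
by the second step. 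Thus every generator $\wh{x}a\wh{y}$ belongs to $\ker\Theta^r(J)\cap\kltq=\mathrm{RAN}(J)_{\perp}$; as the latter is norm-closed, it contains the norm-closed linear span $\mathrm{Bim}(J_{\perp})$, giving the desired inclusion. I do not anticipate a genuine obstacle: the argument is the $C^*$ transcription of \propref{prop4.3}, and the only points requiring care are the bookkeeping of the two dualities in the first step and the use of $c_0(\G)=\liq\cap\kltq$ to ensure both that $J_{\perp}\subseteq\kltq$ and that the generators remain compact.
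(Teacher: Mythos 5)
Your proof is correct and follows essentially the same route as the paper's: the paper reduces to showing $J_{\perp}\subseteq\mathrm{RAN}(J)_{\perp}$ via the $C(\wh{\G})$--bimodule property of $\mathrm{RAN}(J)_{\perp}$ and then verifies the annihilation by the direct pairing $\la x,\Theta^r(f)_*(\rho)\ra=\la x,\pi(\rho)\star f\ra=0$, where $\pi:\TCQ\to\loq$ is the restriction map --- exactly the computation you import from \lemref{lem4.2} (valid here since unitality of $\loq$ makes every weak*-closed left ideal a closed essential left ideal). Your bookkeeping --- the kernel description $\mathrm{RAN}(J)_{\perp}=\ker\Theta^r(J)\cap\kltq$, the inclusion $J_{\perp}\subseteq J^{\perp}$, and compactness of the generators via $c_0(\G)=\liq\cap\kltq$ --- is all sound.
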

\begin{proof}
    Since $\mathrm{RAN}(J)_{\perp}$ is a norm closed $C(\wh{\G})$--bimodule, it is suffices to show that $J_{\perp}\subseteq \mathrm{RAN}(J)_{\perp}$. Let $x\in J_{\perp} $ and $\Theta^r(f)_*(\rho)\in \mathrm{RAN}(J)$ for some $f\in J$ and $\rho\in \mc{T}(\ltq)$. Let $\pi(\rho)=\rho|_{\liq}$ be the canonical quotient map from $\mc{T}(\ltq)$ onto $\loq$. Then
    \begin{align*}
        \la x, \Theta^r(f)_*(\rho)\ra&=\la\Theta^r(f)(x), \rho \ra=\la \Theta^r(f)(x), \pi(\rho) \ra
        =\la f\star x, \pi(\rho)\ra\\&=\la x, \underbrace{\pi(\rho)\star f}_{\in J}\ra=0. 
    \end{align*}
    By density, for all $\om\in \mathrm{RAN}(J)$, we have $\la x, \om\ra=0$, which implies that $x\in \mathrm{RAN}(J)_{\perp}$. 
\end{proof}
\begin{prop}
     Let $\G$ be a discrete quantum group. For any weak*-closed left ideal $J\unlhd \loq$,
     $$c_0(\G)\cap \mathrm{RAN}(J)_{\perp}=J_{\perp}=c_0(\G)\cap\mathrm{Bim}(J_{\perp}).$$
\end{prop}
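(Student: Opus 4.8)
The plan is to deduce both equalities simultaneously from the cycle of inclusions
$$J_{\perp}\subseteq c_0(\G)\cap\mathrm{Bim}(J_{\perp})\subseteq c_0(\G)\cap\mathrm{RAN}(J)_{\perp}\subseteq J_{\perp}.$$
The first inclusion is immediate: since $\wh{\G}$ is compact, $C(\wh{\G})$ is unital, so every $a\in J_{\perp}$ equals $1\cdot a\cdot 1\in\mathrm{Bim}(J_{\perp})$, while $J_{\perp}\subseteq c_0(\G)$ holds by definition. The second inclusion is precisely the preceding lemma, namely $\mathrm{Bim}(J_{\perp})\subseteq\mathrm{RAN}(J)_{\perp}$, intersected with $c_0(\G)$. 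Thus everything reduces to the third inclusion, and once it is established all three sets coincide, yielding both displayed equalities.

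For the third inclusion I would argue as follows. Fix $x\in c_0(\G)\cap\mathrm{RAN}(J)_{\perp}$ and let $f\in J$, $\rho\in\TCQ$. Since $\Theta^r(f)_*(\rho)\in\mathrm{RAN}(J)$ and $x$ annihilates $\mathrm{RAN}(J)$, and since $\Theta^r(f)$ preserves $\liq$ (by its defining formula $\Theta^r(f)(\cdot)=(\id\ten f)\Gamma(\cdot)$, so that the value $\Theta^r(f)(x)\in\liq$ pairs with $\rho$ only through the restriction $\pi(\rho):=\rho|_{\liq}\in\loq$), the same chain of pairings used in the preceding lemma yields
$$0=\la x,\Theta^r(f)_*(\rho)\ra=\la\Theta^r(f)(x),\pi(\rho)\ra=\la x,\pi(\rho)\star f\ra.$$
The restriction map $\pi:\TCQ\to\loq$ is the pre-adjoint of the inclusion $\liq\hookrightarrow\bltq$ and is therefore surjective, so $\pi(\rho)$ ranges over all of $\loq$. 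Hence $\la x,g\star f\ra=0$ for every $g\in\loq$ and every $f\in J$.

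The decisive step is then to exploit discreteness: $\loq$ is unital with unit $\epsilon$ satisfying $\epsilon\star f=f$, so setting $g=\epsilon$ in the identity just obtained gives $\la x,f\ra=0$ for all $f\in J$, i.e. $x\in J_{\perp}$. This closes the cycle. I expect no genuine obstacle; the two points to handle with care are the reduction of the $\TCQ$-pairing to an $\loq$-pairing (legitimate because $x\in c_0(\G)\subseteq\liq$ and $\Theta^r(f)$ maps $\liq$ into $\liq$) and the surjectivity of $\pi$. Conceptually, unitality of $\loq$ replaces the essentiality hypothesis needed in the locally compact analogue \lemref{lem4.2}: instead of approximating $f$ by elements of $\loq\star J$, we recover it exactly as $\epsilon\star f$.
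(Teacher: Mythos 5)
Your proposal is correct and follows essentially the same route as the paper: both reduce everything to the inclusion $c_0(\G)\cap\mathrm{RAN}(J)_{\perp}\subseteq J_{\perp}$, establish $\la x,\pi(\rho)\star f\ra=0$ via the same chain of pairings, and then use discreteness to recover $f$ itself (you set $g=\epsilon$; the paper equivalently notes $\pi(\TCQ)\star J=\loq\star J=J$). The only cosmetic difference is that you spell out the easy inclusion $J_{\perp}\subseteq c_0(\G)\cap\mathrm{Bim}(J_{\perp})$ via unitality of $C(\wh{\G})$, which the paper calls evident.
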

    \begin{proof}
        The equality $J_{\perp}=c_0(\G)\cap\mathrm{Bim}(J_{\perp})$ is evident once we show that $c_0(\G)\cap \mathrm{RAN}(J)_{\perp}=J_{\perp}$. Since $J_{\perp}\subseteq c_0(\G)$, it suffices to show $c_0(\G)\cap \mathrm{RAN}(J)_{\perp}\subseteq J_{\perp}$. 
        
        Let $x\in c_0(\G)\cap \mathrm{RAN}(J)_{\perp}$. Then for all $f\in J$ and for all  $\rho\in \mc{T}(\ltq)$,
        \begin{align*}
            \la x, \pi(\rho)\star f \ra&=\la f\star x, \pi(\rho) \ra\\
            &=\la \Theta^r(f)(x), \pi(\rho) \ra\\
            &=\la \Theta^r(f)(x), \rho \ra\\
            &=\la x, \Theta^r(f)_*(\rho)\ra\\
            &=0
        \end{align*}
        Since $\pi(\mc{T}(\ltq))\star J =\loq\star J=J $, the above shows that for all $f\in J$, $\la x, f \ra=0$, hence, $x\in J_{\perp}$. 
    \end{proof}
    We are now ready to prove a Theorem analogous to Theorem \ref{thm3.3.7} in the $C^*$--algebraic setting.
\begin{thm}
    Let $\G$ be a discrete quantum group with the AP. Then for any weak*-closed left ideal $J\unlhd\loq$,
    $$\mathrm{Bim}(J_{\perp})= \mathrm{RAN}(J)_{\perp}.$$
\end{thm}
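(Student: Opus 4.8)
The plan is to mirror the proof of Theorem~\ref{thm3.3.7}, replacing the weak*-convergent Fej\'{e}r representation of Theorem~\ref{thm3.5} by its norm-convergent $C^*$-analogue Theorem~\ref{thm3.9}. The preceding lemma already yields $\mathrm{Bim}(J_{\perp})\subseteq \mathrm{RAN}(J)_{\perp}$, so only the reverse inclusion $\mathrm{RAN}(J)_{\perp}\subseteq \mathrm{Bim}(J_{\perp})$ needs to be proved; recall that, by definition, $T\in \mathrm{RAN}(J)_{\perp}$ precisely when $\Theta^r(f)(T)=0$ for all $f\in J$.

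First I would realize $\kltq$ as a reduced crossed product. Letting $\G$ act on $c_0(\G)$ by the comultiplication $\al=\Gam|_{c_0(\G)}$, I would identify $\kltq\cong \crsred{\G}{c_0(\G)}$ via the restriction to $\kltq$ of the extended right comultiplication $\Gam^r:T\mapsto V(T\ten 1)V^*$. This is the $C^*$-algebraic counterpart of the identification $\bltq\cong \crs{\G}{\liq}$ used in Theorem~\ref{thm3.3.7}, and the quantum analogue of the classical fact $G\ltimes_r c_0(G)\cong \mc{K}(\ell^2(G))$. Under it the dual action is implemented by $\widetilde{W}$, and $E=E_{\wh{\wtal}}$ is the conditional expectation onto $\al(c_0(\G))\cong c_0(\G)$, namely the restriction of the operator-valued weight from Theorem~\ref{thm3.3.7}.

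Next I would transport the commutation relation established in the proof of Theorem~\ref{thm3.3.7}, that $E$ and $\Theta^r(f)$ commute on $\bltq$, to the subalgebra $\kltq$ (both maps preserve $\kltq$), obtaining $E(\Theta^r(f)(T))=\Theta^r(f)(E(T))$ for all $f\in \loq$ and $T\in \kltq$. Consequently, for $T\in \mathrm{RAN}(J)_{\perp}$ and $f\in J$ we have $\Theta^r(f)(E(T))=E(\Theta^r(f)(T))=0$, whence $E(T)\in c_0(\G)\cap\mathrm{RAN}(J)_{\perp}=J_{\perp}$ by the preceding proposition. Since $\mathrm{RAN}(J)_{\perp}$ is a $C(\wh{\G})$-bimodule, applying the same reasoning to $T((\uih{ki}{\beta})^*\ten 1)\in \mathrm{RAN}(J)_{\perp}$ gives $E(T((\uih{ki}{\beta})^*\ten 1))\in J_{\perp}$ for every $\beta\in\Irrd$ and all admissible indices.

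Finally I would apply Theorem~\ref{thm3.9} to $T\in \mathrm{RAN}(J)_{\perp}\subseteq \crsred{\G}{c_0(\G)}$, producing a net $(\wh{f_{\iota}})$ and finite sets $(\mc{F}_{\iota})$ with
$$T=\lim_{\iota}\sum_{\beta\in\mc{F}_{\iota}}\sum_{i,j,k=1}^{n_\beta}\frac{d_{\beta}}{\lambda^{\beta}_i}\la\wh{f_{\iota}},\uih{ji}{\beta}\ra\, E(T((\uih{ki}{\beta})^*\ten 1))(\uih{kj}{\beta}\ten 1)$$
convergent in norm. Each summand is the product of $E(T((\uih{ki}{\beta})^*\ten 1))\in J_{\perp}$ with $\uih{kj}{\beta}\in\textnormal{Pol}(\wh{\G})\subseteq C(\wh{\G})$, so it lies in $\mathrm{Bim}(J_{\perp})$; as $\mathrm{Bim}(J_{\perp})$ is norm-closed and the convergence is in norm, we conclude $T\in\mathrm{Bim}(J_{\perp})$. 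The step I expect to be the main obstacle is the first one: verifying that $\Gam^r$ really identifies $\kltq$ with $\crsred{\G}{c_0(\G)}$ and that the conditional expectation, the $C(\wh{\G})$-bimodule structure, and the commutation with $\Theta^r$ all descend correctly from the von Neumann setting to the $C^*$-algebra of compacts. Once this bookkeeping is settled, the norm-Fej\'{e}r argument runs exactly as in the weak* case.
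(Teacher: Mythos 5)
Your proposal is correct and follows essentially the same route as the paper: identify $\kltq\cong\crsred{\G}{c_0(\G)}$ via $\Gam^r$, transport the commutation of $E$ with $\Theta^r(f)$ from the von Neumann argument to conclude $E(T(\uis{ij}{\beta}\ten 1))\in c_0(\G)\cap\mathrm{RAN}(J)_{\perp}=J_{\perp}$, and finish with the norm-convergent Fej\'{e}r representation of Theorem~\ref{thm3.9} together with norm-closedness of $\mathrm{Bim}(J_{\perp})$. The one point you flag as a potential obstacle, $\overline{\la c_0(\G)C(\wh{\G})\ra}^{\norm{\cdot}}=\kltq$ and $E(\kltq)=c_0(\G)$, is handled in the paper by citing known results of Hu--Neufang--Ruan on convolution of trace-class operators, so your bookkeeping concern is legitimate but resolvable by reference.
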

\begin{proof}
    We only need to prove that $\mathrm{RAN}(J)_{\perp}\subseteq \mathrm{Bim}(J_{\perp})$. Consider the canonical action $\Gam: c_0(\G)\to M(c_0(\G)\mten c_0(\G))$ of $\G$ on $c_0(\G)$. By restricting the map $\Gam^r$ to $\overline{\la c_0(\G)C(\wh{\G})\ra}^{\norm{\cdot}} \subseteq\bltq$, we can identify $\crsred{\G}{c_0(\G)}$ with $\overline{\la c_0(\G)C(\wh{\G})\ra}^{\norm{\cdot}}$. However, $\overline{\la c_0(\G)C(\wh{\G})\ra}^{\norm{\cdot}}=\kltq$ by \cite[Corollary 3.6]{hu2013convolution}. Therefore,
    $$\kltq \cong \crsred{\G}{c_0(\G)}.$$
    The rest of the proof follows similarly to the proof of Theorem \ref{thm3.3.7}, with just a few minor adjustments. Using the same argument, one can show that $E$ and $\Theta^r(f)$ commute for all $f\in \loq$. For $T\in \mathrm{RAN}(J)_{\perp}$ we have $T(\uis{ij}{\beta}\ten 1)\in \mathrm{RAN}(J)_{\perp}$. By commutativity of $E$ and $\Theta^r(f)$, $f\in J$, and the fact (\cite[Theorem 4.2]{hu2013convolution}) that $$E(\kltq)=\widehat{\Theta^l}^{\rm op}(\wh{\vphi})(\kltq)=\kltq\cap\ell^\infty(\G)=c_0(\G),$$ we obtain $E(T(\uis{ij}{\beta}\ten 1))\in \mathrm{RAN}(J)_{\perp}\cap c_0(\G)=J_{\perp}\subseteq \mathrm{Bim}(J_{\perp})$. The Fej\'{e}r representation for $T$ from Theorem \ref{thm3.9} implies that T is the norm limit of elements of $\mathrm{Bim}(J_{\perp})$. Since $\mathrm{Bim}(J_{\perp})$ is norm closed, we have $T \in \mathrm{Bim}(J_{\perp})$. Hence $\mathrm{RAN}(J)_{\perp}\subseteq \mathrm{Bim}(J_{\perp})$.
\end{proof}

\begin{thm}\label{thm4.28}
    Let $\G$ be a discrete quantum group with the AP. Then for any norm closed subspace $U\subseteq \kltq$, the following are equivalent:
    \begin{enumerate}
		\item There exists a weak*-closed left ideal $ J\unlhd \ell^1(\G)$ such that
		$U=\mathrm{Bim}(J_{\perp})=\mathrm{RAN}(J)_{\perp}.$
		\item $U$ is a $C(\wh{\G})$--bimodule, invariant under $\wh{\tl}\op(M_{cb}^l(\LOQHOP))\cong \cb{\liq}{\LIQHOP}{\BLTQ}$
	\end{enumerate}
\end{thm}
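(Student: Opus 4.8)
The plan is to transport the proof of \thmref{thm3.3.9} from the weak*-closed, $\bltq$-level setting to the norm-closed, $\kltq$-level setting, replacing the two analytic inputs by their $C^*$-counterparts: the norm Fej\'{e}r representation of \thmref{thm3.9} (in place of the weak* one of \thmref{thm3.5}) and the identity $\mathrm{Bim}(J_{\perp})=\mathrm{RAN}(J)_{\perp}$ from the preceding theorem (in place of $\mathrm{Bim}(J^{\perp})=\mathrm{Ran}(J)^{\perp}$). Throughout I work under the identification $\kltq\cong\crsred{\G}{c_0(\G)}$ used in the preceding theorem (via $\Gam^r$ and $\overline{\la c_0(\G)C(\wh{\G})\ra}^{\norm{\cdot}}=\kltq$), where $E=\wh{\tl}\op(\wh{\vphi})|_{\kltq}$ is the conditional expectation onto $c_0(\G)$ satisfying $E(\kltq)=c_0(\G)$.

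For $(1)\Rightarrow(2)$, suppose $U=\mathrm{Bim}(J_{\perp})=\mathrm{RAN}(J)_{\perp}$. Then $U$ is visibly a $C(\wh{\G})$--bimodule, so only invariance under $\wh{\tl}\op(\wh{m})$, $\wh{m}\in M_{cb}^l(\LOQHOP)$, needs checking. First I record that $\wh{\tl}\op(\wh{m})$ preserves $\kltq$: being a $c_0(\G)$--bimodule map whose action on the $C(\wh{\G})$--leg is the multiplier $m_{\wh{m}}^l$, which preserves the reduced $C^*$--algebra $C(\wh{\G})$, it sends $\la c_0(\G)C(\wh{\G})\ra$ into itself and hence preserves the norm closure $\kltq$. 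Then, for $T\in U$ and $\om=\Theta^r(f)_*(\rho)$ with $f\in J$ and $\rho\in\TCQ$, the commutation of $\wh{\tl}\op(\wh{m})$ and $\Theta^r(f)$ (Remark~\ref{r:comm}) gives $\la\wh{\tl}\op(\wh{m})(T),\om\ra=\la T,\Theta^r(f)_*(\wh{\tl}\op(\wh{m})_*(\rho))\ra=0$, since $\Theta^r(f)_*(\wh{\tl}\op(\wh{m})_*(\rho))\in\mathrm{RAN}(J)$ and $T\in\mathrm{RAN}(J)_{\perp}$. By density this holds for every $\om\in\mathrm{RAN}(J)$, whence $\wh{\tl}\op(\wh{m})(T)\in\mathrm{RAN}(J)_{\perp}=U$.

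For $(2)\Rightarrow(1)$, set $I:=\overline{\la\{E(T\wh{g})\mid T\in U,\ \wh{g}\in\polh\}\star\loq\ra}^{\norm{\cdot}}\subseteq c_0(\G)$, which lies in $c_0(\G)$ because $E(\kltq)=c_0(\G)$, and put $J:=I^{\perp}\subseteq\loq$. As $I$ is a norm-closed right $\loq$--submodule of $c_0(\G)$, its annihilator $J$ is a weak*-closed left ideal with $J_{\perp}=I$ by bipolarity. For $T\in U$, each building block $E(T(\uis{ki}{\beta}\otimes 1))(\uih{kj}{\beta}\otimes 1)$ of the norm Fej\'{e}r representation of \thmref{thm3.9} lies in $\mathrm{Bim}(J_{\perp})$, since $\uis{ki}{\beta}\in\polh$ forces $E(T(\uis{ki}{\beta}\otimes 1))\in I=J_{\perp}$ while $\uih{kj}{\beta}\in C(\wh{\G})$; norm convergence then yields $U\subseteq\mathrm{Bim}(J_{\perp})$. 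For the reverse inclusion it suffices to show $I=J_{\perp}\subseteq U$: here $T\wh{g}\in U$ (as $U$ is a right $C(\wh{\G})$--module), $E(T\wh{g})=\wh{\tl}\op(\wh{\vphi})(T\wh{g})\in U$ by invariance under $\wh{\tl}\op(\wh{\vphi})$ (noting that the normal state $\wh{\vphi}$ defines an element of $\LOQHOP\subseteq M_{cb}^l(\LOQHOP)$), and finally $(E(T\wh{g}))\star f=(f\otimes\id)(W^*(1\otimes E(T\wh{g}))W)\in U$ because $W\in M(c_0(\G)\mten C(\wh{\G}))$ and $U$ is a $C(\wh{\G})$--bimodule. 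Thus $I\subseteq U$, giving $\mathrm{Bim}(J_{\perp})\subseteq U$, and hence $U=\mathrm{Bim}(J_{\perp})=\mathrm{RAN}(J)_{\perp}$ by the preceding theorem.

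The main new difficulty, relative to \thmref{thm3.3.9}, is purely $C^*$--algebraic bookkeeping: one must verify that the maps $\wh{\tl}\op(\wh{m})$ genuinely preserve the ideal $\kltq$ rather than merely $\bltq$ (resting on the fact that completely bounded multipliers of $\LOQHOP$ preserve $C(\wh{\G})$), and one must track norm- versus weak*-closures so that $J=I^{\perp}$ is the correct weak*-closed left ideal with $J_{\perp}=I$. These are the only genuinely new points; the algebraic core — the commutation of $\wh{\tl}\op(\wh{m})$ with $\Theta^r(f)$ and the expectation identity $E(T\wh{g})\in J_{\perp}$ feeding the Fej\'{e}r sum — is identical to the von Neumann case, with \thmref{thm3.9} now supplying genuine norm convergence into the norm-closed bimodule $\mathrm{Bim}(J_{\perp})$.
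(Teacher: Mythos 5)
Your proof is correct and follows essentially the same route as the paper's: the same commutation argument via Remark~\ref{r:comm} for $(1)\Rightarrow(2)$, and the same construction $I=\overline{\{E(T\wh{g})\}\star\loq}^{\norm{\cdot}}$, $J=I^{\perp}$, combined with the norm Fej\'{e}r representation of Theorem~\ref{thm3.9}, for $(2)\Rightarrow(1)$. Your added verification that $\wh{\tl}\op(\wh{m})$ preserves $\kltq$ is a worthwhile detail the paper leaves implicit by citing the proof of Theorem~\ref{thm3.3.9}.
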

\begin{proof}
    $(1) \Rightarrow (2)$: exactly the same as the proof of Theorem \ref{thm3.3.9}.\\
    $(2) \Rightarrow (1)$: Let $\Gamma$ be the action of $\G$ on $c_0(\G)$, and identify $\kltq \cong \crsred{\G}{c_0(\G)}$. Let $$I:=\overline{\{E(T\wh{g})| \, T\in U, \wh{g}\in \polh)\}\star \loq}^{\norm{\cdot}}\subseteq c_0(\G),$$ and define $J:=I^{\perp}$. $J$ is a weak*-closed left ideal of $\loq$, since $I$ is a norm closed right $\loq$--submodule of $c_0(\G)$. 
    
    Let $T\in U$. By Theorem \ref{thm3.9}, $T$ is a norm limit of elements in $\mathrm{Bim}(J_{\perp})=\mathrm{Bim}(I)$, implying that $U\subseteq \mathrm{Bim}(J_{\perp})$. Conversely, for any $T\in U$, $E(T)=\wh{\Theta^{\ell}}\op(\wh{\vphi})(T)\in U$ by $\wh{\Theta^{\ell}}\op(\wh{\vphi})$--invariance.  Also, for any $f \in \loq$, $E(T)\star f\in U$ since $U$ is a $C(\wh{\G})$--bimodule. Thus, $J_\perp\subseteq U$ and the desired inclusion follows.
\end{proof}
 If $G$ is a compact group, then $G$ has the approximation property. Consequently, as a result of the above discussion, for any weak*-closed left ideal $ J\unlhd A(G)$ we have $\mathrm{Bim}(J_{\perp})=\mathrm{RAN}(J)_{\perp}$.
With Theorem \ref{thm4.28}, we have the following characterization of jointly invariant subspaces of $\KLT$.
\begin{cor}
    Let $G$ be a compact group. Then for any norm closed subspace $U\subseteq \KLT$, the following are equivalent:
    \begin{enumerate}
        \item There exists a weak*-closed left ideal $ J\unlhd A(G)$ such that
		$U=\mathrm{Bim}(J_{\perp})=\mathrm{RAN}(J)_{\perp}.$
		\item $U$ is a $C(G)$--bimodule, invariant under $\Theta^r(M(G))\cong \cb{VN(G)}{\LI}{\BLT}$.
    \end{enumerate}
\end{cor}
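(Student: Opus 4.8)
The plan is to read the corollary off from Theorem~\ref{thm4.28} applied to the discrete quantum group $\G = \wh{G}$, where $G$ is the given compact group, so that the real content is a change of notation together with one representation-theoretic identification. First I would verify the hypotheses. Since $\wh{\G} = \wh{\wh{G}} = G$ is compact, $\G = \wh{G}$ is a (Kac-type) discrete quantum group, so Theorem~\ref{thm4.28} is applicable in principle. For the AP, I would argue that $\wh{G}$ is amenable: because $G$ is compact, $L^1(\wh{\G}) = L^1(G)$ has a bounded approximate identity, so $\wh{\G} = G$ is co-amenable, which is equivalent to amenability of $\G = \wh{G}$; and amenability of a discrete quantum group entails the AP. Thus $\G = \wh{G}$ is a discrete quantum group with the AP, matching the standing hypothesis of Theorem~\ref{thm4.28}.

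Next I would set up the dictionary identifying the two lists of objects. With $\G = \wh{G}$ one has $\ell^1(\G) = VN(G)_* = A(G)$, so weak*-closed left ideals $J \unlhd \ell^1(\G)$ are exactly weak*-closed left ideals of $A(G)$; moreover $\ell^2(\G) \cong \LT$, whence $\kltq = \KLT$, and $C(\wh{\G}) = C(\wh{\wh{G}}) = C(G)$. Under these identifications the subspaces $\mathrm{Bim}(J_\perp)$, $\mathrm{RAN}(J)_\perp$, and the requirement that $U$ be a $C(\wh{\G})$--bimodule translate verbatim into the $C(G)$--bimodule statements of the corollary, so condition (1) of Theorem~\ref{thm4.28} becomes condition (1) here.

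The hard part --- and the only step requiring a genuine computation --- is to match the invariance condition, i.e.\ to show that invariance under $\wh{\tl}\op(M_{cb}^l(\LOQHOP))$ is the same as invariance under $\Theta^r(M(G))$. Since $\wh{\G} = G$, we have $\LOQHOP = L^1(G\op)$, and $\wh{\tl}\op$ is by definition the map $\Theta^\ell$ attached to $G\op$. Using $W\op = \Sigma V^* \Sigma$ together with the elementary identity $(f \ten \id)(\Sigma Y \Sigma) = (\id \ten f)(Y)$, I would compute
$$\Theta^\ell_{G\op}(f)(x) = (f \ten \id)(\Sigma V (x \ten 1) V^* \Sigma) = (\id \ten f)(V(x\ten 1)V^*) = \Theta^r_{G}(f)(x),$$
so that the two representations on $\BLT$ literally coincide. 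On the multiplier side, left multipliers of $L^1(G\op)$ are right multipliers of $L^1(G)$, and by Wendel's theorem these form the measure algebra $M(G)$, with complete boundedness automatic because $G$ is a commutative quantum group; hence $M_{cb}^l(\LOQHOP) \cong M_{cb}^r(\LO) = M(G)$. Combining the two facts gives $\wh{\tl}\op(M_{cb}^l(\LOQHOP)) = \Theta^r(M(G))$, and the accompanying isomorphism onto $\cb{VN(G)}{\LI}{\BLT}$ is the classical instance of the Junge--Neufang--Ruan identification recorded in the remark following Theorem~\ref{thm1}. With the dictionary and this identification in hand, condition (2) of Theorem~\ref{thm4.28} is precisely condition (2) of the corollary, and the result follows. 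The subtlety I would watch most carefully is the bookkeeping of opposites: that $\wh{\tl}\op$ really is $\Theta^\ell$ of $\wh{\G}\op = G\op$ rather than of $G$, since this is exactly where a misplaced flip would silently break the identification.
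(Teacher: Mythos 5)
Your proposal is correct and follows exactly the route the paper intends: the corollary is stated as an immediate specialization of Theorem~\ref{thm4.28} to the discrete quantum group $\G=\wh{G}$ dual to the compact group $G$, and your dictionary ($\ell^1(\G)=A(G)$, $C(\wh{\G})=C(G)$, $\kltq=\KLT$, $M_{cb}^l(L^1(\wh{\G}\op))\cong M(G)$, $\wh{\tl}\op=\Theta^r$ via $W\op=\Sigma V^*\Sigma$) together with the AP of $\wh{G}$ is precisely the content the paper leaves implicit. The only cosmetic remark is that your amenability detour for the AP is unnecessary: a bounded approximate identity of $L^1(G)=L^1(\wh{\G})$ already converges to $1$ in the weak* topology of $M_{cb}^l(L^1(\wh{\G}))$ and hence witnesses the AP of $\wh{G}$ directly.
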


\subsection{Fubini crossed products}
The main result of this section is a slice map property for any action of a discrete quantum group with AP, which extends the results in \cite[\S 3]{crannNeufang2022Fejer} from locally compact groups to discrete quantum groups.
Unless otherwise stated, $\G$ refers to a discrete quantum group.
\subsubsection{The von Neumann Algebraic Setting}
\begin{defn}
  Let $\al$ be an action of $\G$ on a von Neumann algebra $\n$. We say that a  weak*-closed subspace $X\subseteq \n$ is $\G$--invariant if $\al(X)\subseteq \liq\oten X$, where $\oten$ denotes the weak*-spatial tensor product of dual operator spaces.
\end{defn}

Following \cite[Definition 3.1]{crannNeufang2022Fejer} we define the notion of Fubini crossed product for an action of $\G$ on a von Neumann algebra.

\begin{defn}
    Let $\al$ be an action of $\G$ on a von Neumann algebra $\n$. For a $\G$--invariant weak*-closed subspace $X\subseteq \n$, we define the Fubini crossed product of $X$ by $\G$ as
    \begin{equation}
        \fcrs{\G}{X}:=\{T\in \crs{\G}{\n}\;:\;\; E(T(\wh{v}\ten 1))\in \al(X)\; \;\text{for all}\;\;\wh{v}\in \polh \}.
    \end{equation}
    Where $E$ is the conditional expectation induced by $\wh{\al}$.
\end{defn}
\begin{remark}
    Normality of $E$ and the fact that $\alpha:X\to\liq\oten X$ is a normal isometry (so its range is weak* closed by the closed range theorem \cite[Theorem VI.1.10]{conway}) imply that the set on the right hand side is weak* closed.
\end{remark}
\begin{prop}\label{prop3.4.4}
      Let $\al$ be an action of $\G$ on a von Neumann algebra $\n$. For any $\G$--invariant weak*-closed subspace $X\subseteq \n$ we have
      \begin{equation*}
          \G\bar{\ltimes}X\subseteq \fcrs{\G}{X},
      \end{equation*}
      where $\G\bar{\ltimes}X=\overline{\la\al(X)(\LIQH\ten 1)\ra}^{w^*}$.
\end{prop}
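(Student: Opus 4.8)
The plan is to combine two observations. First, the Fubini crossed product $\fcrs{\G}{X}$ is weak*-closed (as noted in the Remark above, since $E$ is normal and $\al(X)$ is weak*-closed), and for each fixed $\wh{v}\in\polh$ the map $T\mapsto E(T(\wh{v}\ten 1))$ is weak*-continuous with weak*-closed target $\al(X)$. Second, $\G\bar{\ltimes}X$ is by definition the weak*-closed linear span of the generators $\al(x)(\wh{y}\ten 1)$, $x\in X$, $\wh{y}\in\LIQH$. Hence it suffices to check the defining condition of the Fubini crossed product on these generators, namely that $E(\al(x)(\wh{y}\ten 1)(\wh{v}\ten 1))\in\al(X)$ for every $\wh{v}\in\polh$. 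The full inclusion then follows by linearity and by passing to weak*-closures.

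For the generator computation I would first collapse the product, $\al(x)(\wh{y}\ten 1)(\wh{v}\ten 1)=\al(x)(\wh{y}\wh{v}\ten 1)$ with $\wh{y}\wh{v}\in\LIQH$, and then apply $E=(\wh{\vphi}\ten\id\ten\id)\wh{\al}$, using the defining relations $\wh{\al}(\al(x))=1\ten\al(x)$ and $\wh{\al}(\wh{y}\wh{v}\ten 1)=\wh{\Gam}\op(\wh{y}\wh{v})\ten 1$ from \eqref{eq7}. This gives
\begin{equation*}
E(\al(x)(\wh{y}\wh{v}\ten 1))=(\wh{\vphi}\ten\id\ten\id)\big[(1\ten\al(x))(\wh{\Gam}\op(\wh{y}\wh{v})\ten 1)\big].
\end{equation*}
The crucial point is that $\wh{\vphi}$ slices only the first (dual) leg, so the scalars it produces factor out and $\al(x)$ separates cleanly even though it is not an elementary tensor, yielding $\al(x)\big[(\wh{\vphi}\ten\id)\wh{\Gam}\op(\wh{y}\wh{v})\ten 1\big]$.

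It remains to evaluate $(\wh{\vphi}\ten\id)\wh{\Gam}\op(\wh{y}\wh{v})$. Since $\G$ is discrete, $\wh{\G}$ is compact and $\wh{\vphi}$ is a finite bi-invariant Haar weight, so $(\id\ten\wh{\vphi})\wh{\Gam}(z)=\wh{\vphi}(z)1$ for all $z\in\LIQH$. Combining this with $\wh{\Gam}\op=\Sigma\wh{\Gam}$ gives $(\wh{\vphi}\ten\id)\wh{\Gam}\op(\wh{y}\wh{v})=(\id\ten\wh{\vphi})\wh{\Gam}(\wh{y}\wh{v})=\wh{\vphi}(\wh{y}\wh{v})1$, whence
\begin{equation*}
E(\al(x)(\wh{y}\ten 1)(\wh{v}\ten 1))=\wh{\vphi}(\wh{y}\wh{v})\,\al(x)\in\al(X),
\end{equation*}
because $\al(X)$ is a linear subspace. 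This verifies the condition on generators and, after taking weak*-closures, establishes $\G\bar{\ltimes}X\subseteq\fcrs{\G}{X}$.

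I do not anticipate a genuine obstacle: the statement reduces to a direct slice computation on generators, packaged with the weak*-closedness of the Fubini crossed product. The two points that demand care are the tensor-leg bookkeeping, confirming that $\al(x)$ factors out after slicing by $\wh{\vphi}$ despite not being a simple tensor, and invoking the Haar-state invariance in the correct form for the opposite comultiplication $\wh{\Gam}\op=\Sigma\wh{\Gam}$. A final sanity check is that the computation holds uniformly for all $\wh{y}\in\LIQH$, even though $\wh{v}$ is restricted to $\polh$; this is automatic since $\wh{\vphi}$ is everywhere-defined and bounded on the compact quantum group $\wh{\G}$.
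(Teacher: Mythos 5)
Your proposal is correct and follows essentially the same route as the paper: the paper also reduces to the generators $\al(x)(\wh{u}_{ij}^{\beta}\ten 1)$ and computes, via the implementation of $\wh{\al}$ and invariance of the Haar state of $\wh{\G}$, that $E\bigl(\al(x)(\wh{z}\ten 1)\bigr)=\wh{\vphi}(\wh{z})\,\al(x)\in\al(X)$, concluding by linearity and weak*-density. Your version, working with a general $\wh{y}\in\LIQH$ and invoking $(\wh{\vphi}\ten\id)\wh{\Gam}\op=\wh{\vphi}(\cdot)1$ directly, is the same computation in slightly more explicit form.
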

\begin{proof}
    First take $T=\al(x)(\uih{ij}{\beta}\ten 1)\in \G\bar{\ltimes}X$ for some $\beta\in\Irrd$ and $i,j \in \{1,...,n_{\beta}\}$. Then by (\ref{e:align}), $E(T(\uis{kl}{\gamma}\otimes 1))\in\alpha(X)$ for any $\gamma\in \Irrd$, $k,l\in\{1,...,n_\gamma\}$.
Hence, by linearity and density, $ E(T(\wh{v}\otimes 1))\in \al(x)$, $T\in\G  \bar{\ltimes} X$, $\wh{v}\in \polh$.
\end{proof}
The following proposition provides an equivalent description of the Fubini crossed product in terms of the dual co-action.
\begin{prop}\label{prop3.4.5}
    Suppose $\al$ is an action of $\G$ on a von Neumann algebra $\n$, and $X$ is a $\G$--invariant weak*-closed subspace $X\subseteq \n$. Then we have
    \begin{equation}\label{eq23}
        \fcrs{\G}{X}=\{T \in \crs{\G}{\n}\,:\;\; T\cdot f\in \G\bar{\ltimes}X,\; \;\text{for all}\;\;f\in \LOQH\}.
    \end{equation}
\end{prop}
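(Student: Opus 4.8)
The plan is to prove the two inclusions separately, using the purely algebraic identity from the proof of \lemref{lem3.3} as the bridge between the operator-valued weight $E$ and the module action $T\mapsto T\cdot f=(\wh{\tl}\op(\wh f)\ten\id)(T)$, together with two elementary facts: $\G\bar{\ltimes}X$ is a right $\LIQH\ten 1$--module, and $E$ maps $\G\bar{\ltimes}X$ into $\al(X)$. For the inclusion ``$\subseteq$'', I would take $T\in\fcrs{\G}{X}$, so $E(T(\wh v\ten 1))\in\al(X)$ for all $\wh v\in\polh$. For $\wh f\in\polh\cdot\phih$, the computation \eqref{eq12theta} in \lemref{lem3.3} gives
\[
T\cdot \wh f=\sum_{\beta\in\mc F}\sum_{i,j,k=1}^{n_\beta}\frac{d_\beta}{\lm^\beta_i}\la\wh f,\uih{ji}{\beta}\ra\,E(T(\uis{ki}{\beta}\ten 1))(\uih{kj}{\beta}\ten 1).
\]
Since $\uis{ki}{\beta}\in\polh$, each factor $E(T(\uis{ki}{\beta}\ten 1))$ lies in $\al(X)$, and $\uih{kj}{\beta}\ten 1\in\LIQH\ten 1$, so every summand is in $\al(X)(\LIQH\ten 1)\subseteq\G\bar{\ltimes}X$, hence so is the finite sum. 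As $\polh\cdot\phih$ is norm dense in $\LOQH$ and $f\mapsto T\cdot f$ is norm continuous (because $\|\wh{\tl}\op(\wh f)\|_{cb}\le\|\wh f\|$), while $\G\bar{\ltimes}X$ is weak*-- hence norm--closed, this would extend to all $f\in\LOQH$.

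The reverse inclusion rests on the sublemma $\al(\n)\cap(\G\bar{\ltimes}X)=\al(X)$. The inclusion ``$\supseteq$'' is immediate. For ``$\subseteq$'' I would first check $E(\G\bar{\ltimes}X)\subseteq\al(X)$: on a generator $\al(x)(\wh y\ten 1)$ with $x\in X$, the conditional-expectation property gives $E(\al(x)(\wh y\ten 1))=\al(x)\,E(\wh y\ten 1)$, and using \eqref{eq7}, $\wh{\Gam}\op=\Sigma\wh{\Gam}$, and left invariance of $\wh{\vphi}$ one computes $E(\wh y\ten 1)=\wh{\vphi}(\wh y)(1\ten 1)$, so the value is $\wh{\vphi}(\wh y)\al(x)\in\al(X)$; normality of $E$ and weak*-closedness of $\al(X)$ then give the claim on all of $\G\bar{\ltimes}X$. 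Since $\G$ is discrete, $E$ is a normal conditional expectation onto $\al(\n)$ (the remark following \thmref{thm6}), so any $z\in\al(\n)\cap(\G\bar{\ltimes}X)$ satisfies $z=E(z)\in\al(X)$.

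For ``$\supseteq$'', I would take $T$ with $T\cdot f\in\G\bar{\ltimes}X$ for all $f\in\LOQH$ and must show $E(T(\wh v\ten 1))\in\al(X)$ for $\wh v\in\polh$. Starting from \eqref{eq10}, namely
\[
E(T(\uis{ki}{\beta}\ten 1))(\uih{kj}{\beta}\ten 1)=\sum_{l=1}^{n_\beta}\big(T\cdot(\uis{li}{\beta}\cdot\phih)\big)(\uis{kl}{\beta}\uih{kj}{\beta}\ten 1),
\]
I would multiply on the right by $(\uis{kj}{\beta}\ten 1)$ and sum over $j$; the row-orthogonality $\sum_j\uih{kj}{\beta}\uis{kj}{\beta}=1$ of the unitary corepresentation collapses both sides to
\[
E(T(\uis{ki}{\beta}\ten 1))=\sum_{l=1}^{n_\beta}\big(T\cdot(\uis{li}{\beta}\cdot\phih)\big)(\uis{kl}{\beta}\ten 1).
\]
Each $T\cdot(\uis{li}{\beta}\cdot\phih)\in\G\bar{\ltimes}X$ by hypothesis, and since $\G\bar{\ltimes}X$ is a right $\LIQH\ten 1$--module the right-hand side lies in $\G\bar{\ltimes}X$. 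As the left-hand side lies in $\al(\n)$, the sublemma forces $E(T(\uis{ki}{\beta}\ten 1))\in\al(X)$. Because the adjoints $\{\uis{ki}{\beta}\}$ linearly span $\polh$ (the $*$-algebra $\polh$ is spanned by the matrix coefficients and is closed under adjoints), linearity then yields $E(T(\wh v\ten 1))\in\al(X)$ for every $\wh v\in\polh$, i.e. $T\in\fcrs{\G}{X}$.

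The main obstacle I anticipate is getting the sublemma $\al(\n)\cap(\G\bar{\ltimes}X)=\al(X)$ exactly right—in particular the slice computation $E(\wh y\ten 1)=\wh{\vphi}(\wh y)1$ and the weak*-closedness of $\al(X)$ (which follows since $\al$ restricts to a normal isometry with weak*-closed range)—together with the correct use of the corepresentation orthogonality relations to invert \eqref{eq10} and isolate the single coefficient $E(T(\uis{ki}{\beta}\ten 1))$. It is worth emphasizing that, unlike the earlier results of this section, this proposition requires no appeal to the approximation property: both inclusions follow from the algebraic identity of \lemref{lem3.3} and the density of $\polh\cdot\phih$ in $\LOQH$.
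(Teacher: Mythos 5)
Your proof is correct, and while the forward inclusion follows the paper's argument essentially verbatim (both rest on the identity of \lemref{lem3.3} applied to $\wh f\in\polh\cdot\phih$ plus density), your reverse inclusion takes a genuinely different route. The paper shows $E(T(\uih{ij}{\beta}\ten 1))\in\G\bar{\ltimes}X\subseteq\bltq\oten X$ and simultaneously $\in\al(\n)\subseteq\liq\oten\n$, then invokes Property $S_\sigma$ of $\bltq$ to identify $(\liq\oten\n)\cap(\bltq\oten X)$ with $\liq\ften X$, and finally slices with the counit $\epsilon$ (Remark~\ref{rem2.5.3}) to land in $\al(X)$. You instead prove the sublemma $\al(\n)\cap(\G\bar{\ltimes}X)=\al(X)$ directly: the computation $E(\wh y\ten 1)=\wh\vphi(\wh y)(1\ten 1)$ via \eqref{eq7} and left invariance is sound (it is the same computation the paper performs in \eqref{e:align}), $E$ is a normal conditional expectation onto $\al(\n)$ in the discrete case, and the $\al(\n)$-bimodule property plus weak*-closedness of $\al(X)$ give $E(\G\bar{\ltimes}X)\subseteq\al(X)$, so fixed points of $E$ lying in $\G\bar{\ltimes}X$ must lie in $\al(X)$. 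Your inversion of \eqref{eq10} by right-multiplying by $\uis{kj}{\beta}\ten 1$ and summing over $j$ is also valid, since unitarity of the corepresentation gives $\sum_j\uih{kj}{\beta}\uis{kj}{\beta}=1$, and the spanning of $\polh$ by adjoints of matrix coefficients (via conjugate representations) closes the argument. The trade-off: your approach avoids the slice-map machinery (Property $S_\sigma$, Fubini tensor products) entirely and is more self-contained, at the cost of the extra orthogonality manipulation; the paper's approach is shorter once the slice-map facts are granted and makes the connection to the slice map property of Subsection 4.3 more transparent. Your closing observation that the AP plays no role here is also consistent with the paper.
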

In preparation of the proof, recall that $T\cdot\wh{f}=(\tll{f}\ten \id )(T)$ and by normality of $(\tll{f}\ten \id)$, the set on the right-hand side of equation \eqref{eq23} forms a weak*-closed subspace of $\crs{\G}{\n}$.
\begin{proof}
Denote the right-hand side of equation \eqref{eq23} by $\mathcal{D}$.
 Assume that $T \in  \fcrs{\G}{X}$. Let $\wh{f}=\uis{ji}{\beta}\cdot \wh{\vphi}\in \LOQH$ for some $\beta \in \Irrd$. By Lemma \ref{lem3.3} we have
\begin{equation}\label{eq3.14}
    T\cdot \wh{f}=\sum_{k=1}^{n_{\beta}}E(T(\uis{ki}{\beta}\ten1))(\uih{kj}{\beta}\ten 1).
\end{equation}
Since $T\in \fcrs{\G}{X}$, by definition, $E(T(\uis{ki}{\beta}\ten1))\in \al(X)$ for all $\beta\in \Irrd$. Therefore, equation \eqref{eq3.14} implies that $T\cdot \wh{f}\in \la\al(X)(\LIQH\ten 1)\ra$, Then linearity and density  (recall that $\overline{\polh\cdot \wh{\vphi}}=\LOQH$) show that $T\cdot \wh{f}\in \overline{\la\al(X)(\LIQH\ten 1)\ra}^{w^*}$ for all $\wh{f}\in \LOQH$. Hence $T\in\mathcal{D} $.
 
 To prove the reverse inclusion, first note that for any $T\in\crs{\G}{\n}$,
 \begin{align}\label{eq3.15}
     E(T(\uih{ij}{\beta}\ten 1))&=(\wh{\vphi}\ten \id \ten \id)\wh{\al}(T(\uih{ij}{\beta}\ten 1))\nonumber\\
     &=(\wh{\vphi}\ten \id \ten \id)[\wh{\al}(T)\wh{\al}(\uih{ij}{\beta}\ten 1)]\nonumber\\
     &=(\wh{\vphi}\ten \id \ten \id)[\wh{\al}(T)(\wh{\Gam}\op(\uih{ij}{\beta})\ten 1)]\nonumber\\
     &=\sum_{m=1}^{n_\beta} (\wh{\vphi}\ten \id \ten \id)[\wh{\al}(T)(\uih{mj}{\beta}\ten \uih{im}{\beta}\ten 1)]\nonumber\\
     &=\sum_{m=1}^{n_\beta} (\uih{mj}{\beta}\cdot\wh{\vphi}\ten \id \ten \id)[\wh{\al}(T)(1\ten \uih{im}{\beta}\ten 1)]\nonumber\\
     &=\sum_{m=1}^{n_\beta}  T\cdot\wh{\vphi}_{mj}^{\beta}\,(\uih{im}{\beta}\ten 1).\nonumber
 \end{align}
 Let $T\in \mc{D}$, then for any $\wh{f}\in \LOQH$ we have $T\cdot \wh{f}\in \G\bar{\ltimes}X$. In particular, $T\cdot\wh{\vphi}_{mj}^{\beta}\in \G\bar{\ltimes}X$ for any $\beta\in \Irrd$, $m,j\in\{1,...,n_{\beta}\}$. By the above calculations,
 $$E(T(\uih{ij}{\beta}\ten 1))=\sum_{m=1}^{n_\beta} \underbrace{T\cdot\wh{\vphi}_{mj}^{\beta}}_{\in \G\bar{\ltimes}X }\,(\uih{im}{\beta}\ten 1)\in \G\bar{\ltimes}X \subseteq \bltq\oten X.$$
 On the other hand, $E(T(\uih{ij}{\beta}\ten 1))\in\al(N)\subseteq\liq\oten\n$. Therefore, $$E(T(\uih{ij}{\beta}\ten 1))\in\liq\oten\n\cap\bltq\oten X.$$
 Since $\bltq$ has Property $S_{\sigma}$ \cite[Theorem 1.9]{Kraus-TheSliceProblem} (see also \cite{kraus1991slice}), it follows that
  $\bltq\oten X=\bltq \ften X$ (the Fubini tensor product of $\bltq$ and $X$). Moreover, we have 
  \begin{align*}(\liq\oten\n)\cap(\bltq\oten X)&=(\liq\ften\n)\cap(\bltq\ften X)\\
  &=(\liq\cap \bltq) \ften (\n \cap X)=\liq\ften X.
  \end{align*}
  Thus, $E(T(\uih{ij}{\beta}\ten 1))\in \liq\ften X$. Let $n \in \n $ be such that $\al(n)=E(T(\uih{ij}{\beta}\ten 1))$. Then, following  the discussion in Remark \ref{rem2.5.3} we have $n=n\leftindex_{\al}{\star}\epsilon=(\epsilon\ten \id )\al(n)\in X$. Hence, $E(T(\uih{ij}{\beta}\ten 1))\in \al(X)$, which implies that $T \in \fcrs{\G}{X}$.
\end{proof}
In the setting of locally compact groups, it was shown in \cite{suzuki2017group} that for a discrete group $G$ with the AP acting on a von Neumann algebra $\n$, the Fubini crossed product is equal to $G\bar{\ltimes}X$ for any $G$--invariant weak*-closed subspace $X$. This result was then extended to any locally compact group with the AP in \cite{crannNeufang2022Fejer}. We now prove an extension to discrete quantum groups with the AP. 
\begin{prop}\label{prop3.4.6}
    Let $\G$ be a discrete quantum group with the AP and $\al$ be an action of $\G$ on a von Neumann algebra $\n$. Then for any $\G$--invariant weak*-closed subspace $X\subset \n$ 
    $$\G\bar{\ltimes}X=\fcrs{\G}{X}.$$
\end{prop}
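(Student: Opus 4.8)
The inclusion $\G\bar{\ltimes}X\subseteq\fcrs{\G}{X}$ is already established in Proposition~\ref{prop3.4.4}, so the plan is to prove only the reverse inclusion $\fcrs{\G}{X}\subseteq\G\bar{\ltimes}X$, and the tool for this is the Fej\'{e}r representation of Theorem~\ref{thm3.5}. First I would fix $T\in\fcrs{\G}{X}$ and apply \eqref{eq13fejer}, which writes $T$ as the weak*-limit of the partial sums
$$S_{\iota}:=\sum_{\beta\in\mc{F}_{\iota}}\sum_{i,j,k=1}^{n_{\beta}}\frac{d_{\beta}}{\lambda_i^{\beta}}\la\wh{f_{\iota}},\uih{ji}{\beta}\ra\,E(T(\uis{ki}{\beta}\ten 1))(\uih{kj}{\beta}\ten 1).$$

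The decisive observation is that each coefficient operator $E(T(\uis{ki}{\beta}\ten 1))$ lies in $\al(X)$. Indeed $\uis{ki}{\beta}\in\polh$, so membership in $\al(X)$ is precisely the defining condition of $\fcrs{\G}{X}$. Writing $E(T(\uis{ki}{\beta}\ten 1))=\al(x_{ki}^{\beta})$ for suitable $x_{ki}^{\beta}\in X$ and absorbing the scalar coefficient into the argument of the (linear) map $\al$, each summand takes the form $\al\!\big(c_{ijk}^{\beta}x_{ki}^{\beta}\big)(\uih{kj}{\beta}\ten 1)$ with $c_{ijk}^{\beta}\in\C$ and $\uih{kj}{\beta}\in\polh\subseteq\LIQH$. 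Since $X$ is a subspace, $c_{ijk}^{\beta}x_{ki}^{\beta}\in X$, so this term lies in $\la\al(X)(\LIQH\ten 1)\ra$. Hence every partial sum $S_{\iota}$ belongs to $\la\al(X)(\LIQH\ten 1)\ra\subseteq\G\bar{\ltimes}X$.

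Finally, since $\G\bar{\ltimes}X=\overline{\la\al(X)(\LIQH\ten 1)\ra}^{w^*}$ is weak*-closed and $T=w^*\text{-}\lim_{\iota}S_{\iota}$, I conclude $T\in\G\bar{\ltimes}X$, which gives the reverse inclusion and therefore the claimed equality. I do not expect a genuine obstacle here: the only substantive point is recognizing that the Fubini condition on $T$ is exactly what forces the Fej\'{e}r coefficients into $\al(X)$, and the convergence of the expansion back to $T$ is guaranteed by the AP through Theorem~\ref{thm3.5}. (One could alternatively route the argument through the characterization in Proposition~\ref{prop3.4.5}, but the direct appeal to the Fej\'{e}r decomposition is the most economical.)
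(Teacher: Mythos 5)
Your proposal is correct and follows essentially the same route as the paper: the forward inclusion is quoted from Proposition~\ref{prop3.4.4}, and the reverse inclusion is obtained by applying the Fej\'{e}r representation of Theorem~\ref{thm3.5} to $T\in\fcrs{\G}{X}$, observing that each coefficient $E(T((\wh{u}_{ki}^{\beta})^*\otimes 1))$ lies in $\al(X)$ by the defining condition of the Fubini crossed product (since $(\wh{u}_{ki}^{\beta})^*\in\polh$), so every partial sum lies in $\la\al(X)(\LIQH\ten 1)\ra$ and weak*-closedness of $\G\bar{\ltimes}X$ finishes the argument. This is exactly the paper's proof.
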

\begin{proof}
    We only need to show that $\fcrs{\G}{X}\subseteq \G\bar{\ltimes}X$. Let $T\in \fcrs{\G}{X}\subseteq \crs{\G}{\n}$, then $E(T(\wh{v}\ten 1))\in \al(X)$ for all $\wh{v}\in \polh$. By Theorem \ref{thm3.5}
    $$ T=w^*-\lim_{\iota}\sum_{\beta\in \mathcal{F}_\iota}\sum _{i,j,k=1}^{n_\beta}\frac{d_{\beta}}{\lambda^{\beta}_i}\la \wh{f_{\iota}}, \wh{u}_{ji}^{\beta} \ra\underbrace{E(T((\wh{u}_{ki}^{\beta})^*\otimes 1))}_{\in \al(X)}\underbrace{(\wh{u}_{kj}^{\beta}\otimes 1)}_{\in \LIQH\ten 1},$$
for a net $(\widehat{f}_\iota)$ witnessing the AP. Therefore, $T\in \overline{\la\al(X)(\LIQH\ten 1)\ra}^{w^*}=\G\bar{\ltimes}X$.
\end{proof}
Proposition \ref{prop3.4.5} allows us to interpret the result of the above proposition as a ``slice map property'' in relation to the Fubini crossed product.
\begin{defn}
    Let $\al$ be an action of a discrete quantum group $\G$ on a von Neumann algebra $\n$. We say that the triple $(\n,\,\G,\,\al)$ has the slice map property if $\G\bar{\ltimes}X=\fcrs{\G}{X}$ for every $\G$--invariant weak*-closed subspace $X\subseteq \n$.
\end{defn}
\begin{cor}
    For a discrete quantum group $\G$ with the AP, any triple $(\n,\,\G,\,\al)$ has the slice map property.
\end{cor}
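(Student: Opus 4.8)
The plan is to recognize this corollary as a direct reformulation of \propref{prop3.4.6}. By definition, the triple $(\n, \G, \al)$ has the slice map property exactly when $\G\bar{\ltimes}X = \fcrs{\G}{X}$ holds for every $\G$--invariant weak*-closed subspace $X \subseteq \n$, and this is precisely the equality established in \propref{prop3.4.6} under the hypothesis that $\G$ has the AP. Thus I would simply invoke \propref{prop3.4.6} and observe that, since the hypotheses on $X$ there range over all such subspaces, the defining condition of the slice map property is met.

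To make the logical dependence transparent, I would recall where each inclusion comes from. The inclusion $\G\bar{\ltimes}X \subseteq \fcrs{\G}{X}$ is furnished by \propref{prop3.4.4} and holds with no approximation assumption. The reverse inclusion is the point at which the AP is used: given $T \in \fcrs{\G}{X}$, each coefficient $E(T((\wh{u}_{ki}^{\beta})^*\ten 1))$ lies in $\al(X)$ by definition of the Fubini crossed product, so feeding $T$ into the Fej\'{e}r representation of \thmref{thm3.5} expresses it as a weak*-limit of sums whose every term is a product of an element of $\al(X)$ with an element of $\LIQH \ten 1$. Each such term lies in $\la \al(X)(\LIQH \ten 1)\ra$, and passing to the weak*-limit places $T$ in $\G\bar{\ltimes}X$.

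Consequently, there is no genuine obstacle remaining at this stage: the substantive analysis was already carried out in proving the Fej\'{e}r representation (\thmref{thm3.5}) and in deducing \propref{prop3.4.6} from it. The corollary merely repackages that equality in the language of the slice map property, so the proof reduces to a single appeal to \propref{prop3.4.6}.
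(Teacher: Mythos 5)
Your proposal is correct and matches the paper exactly: the corollary is an immediate restatement of Proposition \ref{prop3.4.6} in the terminology of the slice map property, with the easy inclusion coming from Proposition \ref{prop3.4.4} and the reverse inclusion from the Fej\'{e}r representation of Theorem \ref{thm3.5}. Nothing further is needed.
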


\subsubsection{$C^*$--Algebraic Setting}

We have similar notions in the setting of $C^*$--algebraic actions of discrete quantum groups. The proofs of results in the von Neumann algebraic context apply unchanged in the $C^*$--algebraic setting. Therefore, proofs in this subsection are omitted.


\begin{defn}
    Let $\al$ be an action of $\G$ on a $C^*$--algebra $A$. For a $\G$--invariant norm closed subspace $X\subseteq A$, we define the Fubini crossed product of $X$ by $\G$ as
    \begin{equation}
        \fcrs{\G}{X}:=\{T\in \crsred{\G}{A}\;:\;\; E(T(\wh{v}\ten 1))\in \al(X)\; \;\text{for all}\;\;\wh{v}\in \polh \}.
    \end{equation}
\end{defn}
Continuity of $E$ and $\alpha$ imply that $\fcrs{\G}{X}$ is a closed subspace of $\crsred{\G}{A}$.
\begin{prop}\label{prop3.4.10}
      Let $\al$ be an action of $\G$ on a $C^*$--algebra $A$. For any $\G$--invariant closed subspace $X\subseteq A$ we have
      \begin{equation*}
          \G\bar{\ltimes}_rX\subseteq \fcrs{\G}{X},
      \end{equation*}
      where $\G\bar{\ltimes}_rX=\overline{\la\al(X)(C(\wh{\G})\ten 1)\ra}^{\|\cdot\|}$.
\end{prop}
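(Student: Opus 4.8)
The plan is to mirror the von Neumann algebraic argument of Proposition~\ref{prop3.4.4}, replacing weak*-closure and normality throughout by norm-closure and norm-continuity. The only analytic input specific to the $C^*$--setting is that $\al(X)$ is norm closed and that $E$ is norm contractive; everything else is the purely algebraic identity already recorded in \eqref{e:align}.

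First I would reduce to the spanning elements $T=\al(x)(\uih{ij}{\beta}\ten 1)$ with $x\in X$, $\beta\in\Irrd$ and $1\le i,j\le n_\beta$. Since $\polh$ is norm dense in $C(\wh{\G})$, finite linear combinations of such $T$ are norm dense in $\G\bar{\ltimes}_rX=\overline{\la\al(X)(C(\wh{\G})\ten 1)\ra}^{\|\cdot\|}$. For such a $T$, the computation \eqref{e:align} (with $a$ replaced by $x\in X$) gives, for every $\gamma\in\Irrd$ and all $k,l$,
\[
E(T(\uis{kl}{\gamma}\ten 1))=\wh{\vphi}(\uih{ij}{\beta}\uis{kl}{\gamma})\,\al(x)\in\al(X),
\]
because $\al(x)\in\al(X)$ and the scalar $\wh{\vphi}(\uih{ij}{\beta}\uis{kl}{\gamma})$ merely rescales it. As the coefficients $\uis{kl}{\gamma}$ linearly span $\polh$, linearity yields $E(T(\wh{v}\ten 1))\in\al(X)$ for all $\wh{v}\in\polh$; that is, $T\in\fcrs{\G}{X}$.

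It then remains to pass to a general $T\in\G\bar{\ltimes}_rX$ by a density/continuity argument. I would fix $\wh{v}\in\polh$ and note that $S\mapsto E(S(\wh{v}\ten 1))$ is norm continuous, since $E$ is a contractive conditional expectation and right multiplication by $\wh{v}\ten 1$ is bounded. Choosing a sequence $T_n\to T$ of finite linear combinations of the spanning elements above, we get $E(T_n(\wh{v}\ten 1))\to E(T(\wh{v}\ten 1))$ with each term in $\al(X)$; since $\al$ is an isometric $*$--homomorphism and $X$ is norm closed, $\al(X)$ is norm closed, so the limit lies in $\al(X)$. Hence $E(T(\wh{v}\ten 1))\in\al(X)$ for every $\wh{v}\in\polh$, i.e. $T\in\fcrs{\G}{X}$.

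I do not expect a serious obstacle here: the entire content is carried by \eqref{e:align}, and the only care needed is the bookkeeping of the two analytic facts (norm-closedness of $\al(X)$ and contractivity of $E$) that make the von Neumann algebraic proof transfer verbatim to the reduced $C^*$--crossed product.
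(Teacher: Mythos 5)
Your proof is correct and takes essentially the same approach as the paper: the paper omits the proof of this proposition, stating that the von Neumann algebraic argument of Proposition~\ref{prop3.4.4} (which rests entirely on the computation \eqref{e:align}) carries over unchanged, and your adaptation --- reducing to spanning elements $\al(x)(\uih{ij}{\beta}\ten 1)$, invoking \eqref{e:align}, and then using norm-continuity of $S\mapsto E(S(\wh{v}\ten 1))$ together with norm-closedness of $\al(X)$ --- is exactly that transfer.
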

\begin{defn}
    Let $\al$ be an action of a discrete quantum group $\G$ on a $C^*$--algebra $A$. We say that the triple $(A,\,\G,\,\al)$ has the slice map property if $\G\bar{\ltimes}_rX=\fcrs{\G}{X}$ for every $\G$--invariant closed subspace $X\subseteq A$.
\end{defn}
\begin{prop}
    Let $\G$ be a discrete quantum group with the AP and $\al$ be an action of $\G$ on a $C^*$--algebra $A$. Then for any $\G$--invariant  closed subspace $X\subseteq A$ 
    $$\G\bar{\ltimes}_rX=\fcrs{\G}{X}.$$
\end{prop}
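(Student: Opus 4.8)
The plan is to follow the proof of \propref{prop3.4.6} almost verbatim, substituting the norm-convergent Fej\'{e}r representation of \thmref{thm3.9} for the weak*-convergent one of \thmref{thm3.5}. By \propref{prop3.4.10} the inclusion $\G\bar{\ltimes}_rX\subseteq\fcrs{\G}{X}$ is already available, so the only thing left to establish is the reverse inclusion $\fcrs{\G}{X}\subseteq\G\bar{\ltimes}_rX$.

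First I would fix $T\in\fcrs{\G}{X}\subseteq\crsred{\G}{A}$. By definition of the Fubini crossed product, $E(T(\wh{v}\ten 1))\in\al(X)$ for every $\wh{v}\in\polh$; since $\polh$ is a $*$--algebra, in particular $(\wh{u}_{ki}^{\beta})^*\in\polh$, so $E(T((\wh{u}_{ki}^{\beta})^*\ten 1))\in\al(X)$ for all $\beta\in\Irrd$ and all admissible $i,k$. Because $\G$ has the AP, \thmref{thm3.9} then supplies nets $(\wh{f_{\iota}})$ and $(\mc{F}_{\iota})$ with
$$T=\lim_{\iota}\sum_{\beta\in\mc{F}_{\iota}}\sum_{i,j,k=1}^{n_\beta}\frac{d_{\beta}}{\lambda^{\beta}_i}\la\wh{f_{\iota}},\wh{u}_{ji}^{\beta}\ra\, E(T((\wh{u}_{ki}^{\beta})^*\ten 1))(\wh{u}_{kj}^{\beta}\ten 1),$$
the limit now being taken in the norm topology of $\crsred{\G}{A}$ rather than weak*.

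The decisive observation is that each summand factors as $E(T((\wh{u}_{ki}^{\beta})^*\ten 1))\in\al(X)$ times $(\wh{u}_{kj}^{\beta}\ten 1)\in C(\wh{\G})\ten 1$, so every Fej\'{e}r partial sum lies in $\la\al(X)(C(\wh{\G})\ten 1)\ra$. Since $\G\bar{\ltimes}_rX=\overline{\la\al(X)(C(\wh{\G})\ten 1)\ra}^{\|\cdot\|}$ is norm closed and $T$ is the \emph{norm} limit of these partial sums, we conclude $T\in\G\bar{\ltimes}_rX$, finishing the proof.

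Since the argument amounts to quoting \thmref{thm3.9} and \propref{prop3.4.10}, I do not expect a genuine obstacle, only two bookkeeping points. One must invoke the Fej\'{e}r expansion for $T$ regarded inside the \emph{reduced} crossed product $\crsred{\G}{A}$, which is exactly the setting in which \thmref{thm3.9} yields a norm-convergent decomposition; and one must use that $\polh$ is $*$--closed so that the conditional-expectation factors really land in $\al(X)$. The sole conceptual difference from \propref{prop3.4.6} is that norm closedness of $\G\bar{\ltimes}_rX$ replaces weak*-closedness of $\G\bar{\ltimes}X$; every other step transfers unchanged.
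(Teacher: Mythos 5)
Your argument is correct and is exactly what the paper intends: it omits the proof with the remark that the von Neumann algebraic arguments (here, the proof of Proposition \ref{prop3.4.6}) carry over unchanged, which is precisely your substitution of the norm-convergent Fej\'{e}r representation of Theorem \ref{thm3.9} and the norm-closed span defining $\G\bar{\ltimes}_rX$ for their weak* counterparts. Your two bookkeeping points ($*$-closedness of $\polh$ and working inside $\crsred{\G}{A}$) are both valid and handled correctly.
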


\section*{Acknowledgements}

The authors would like to thank the Isaac Newton Institute for Mathematical Sciences, Cambridge, for support and hospitality during the programme Quantum information, quantum groups and operator algebras, where work on this paper was undertaken. This work was supported by EPSRC grant EP/Z000580/1. J. Crann and M. Neufang were partially supported by the NSERC Discovery Grants RGPIN-2023-05133 and RGPIN-2020-06505, respectively.

\bibliographystyle{unsrt}
\bibliography{references}
\vspace{1cm}

\end{spacing}

\end{document}